\documentclass[12pt,reqno]{amsart}
\topmargin -1cm \textheight 9.5in
\usepackage[nospace,noadjust]{cite}
\usepackage{verbatim,amscd,amssymb}
\usepackage{graphicx}
\usepackage{amsmath}
\usepackage{amsmath}
\usepackage{amsfonts}
\usepackage{amssymb}
\usepackage{inputenc}
\usepackage{enumitem}
\usepackage{float}

\usepackage[active]{srcltx}

\graphicspath{ {./images/} }

\usepackage{fontenc}

\usepackage{float}

\newtheorem{theorem}{Theorem}[section]
\newtheorem{lemma}[theorem]{Lemma}

\theoremstyle{definition}
\newtheorem{definition}[theorem]{Definition}
\newtheorem{example}[theorem]{Example}

\theoremstyle{remark}

\theoremstyle{assumption}

\numberwithin{equation}{section}

\newcommand{\RA}{\rightarrow}

\newcommand{\sha}{\succ\mkern-14mu_s\;}

\begin{document}
	
	\date{\today}
	
	\title[Forcing among exact patterns of triods]{Forcing among exact patterns of triods}
	
	
	\author{Sourav Bhattacharya}

	\address[Dr. Sourav Bhattacharya]
	{Department of Mathematics, Visvesvaraya National Institute Of Technology Nagpur, 
		Nagpur, Maharashtra 440010, India}
	\email{souravbhattacharya@mth.vnit.ac.in}

	\subjclass[2010]{Primary 37E05, 37E15,  37E25, 37E40; Secondary 37E45 37B20}

\keywords{exact cycles, triods, block structure, forcing relation, rotation interval, codes} 

\begin{abstract}
	We obtain a complete characterization of \emph{topologically exact patterns} on \emph{triods}. Based on their \emph{rotation number} $\rho$, these \emph{exact patterns} are grouped into three classes: \emph{slow} ($\rho < \frac{1}{3}$), \emph{fast} ($\rho > \frac{1}{3}$) and \emph{ternary} ($\rho = \frac{1}{3}$).  For each category, we derive a \emph{linear ordering} of the set of natural numbers, $\mathbb{N}$ that captures  \emph{forcing} between the \emph{patterns}. We also show that each of these orderings is \emph{stable} under perturbations.
	
\end{abstract}

	\maketitle
		\section{Introduction}\label{intro}

		In 1964, A. N Sharkovsky in his seminal paper \cite{S}, established a groundbreaking result that completely describes all possible sets of periods of periodic orbits (also called cycles) for continuous self-maps of the interval. To state his theorem, we first recall the \emph{Sharkovsky ordering} on the set of natural numbers,  $\mathbb{N}$:
		$$3\sha 5\sha 7\sha\dots\sha 2\cdot3\sha 2\cdot5\sha 2\cdot7 \sha \dots $$
		$$
		\sha\dots 2^2\cdot3\sha 2^2\cdot5\sha 2^2\cdot7\sha\dots\sha 8\sha
		4\sha 2\sha 1$$ 
		
		For each $k \in \mathbb{N}$, define $Sh(k) = \{\, m \in \mathbb{N} : k \sha m \,\} \cup \{k\}$
		and set $Sh(2^\infty) = \{\, 1, 2, 4, 8, \dots, 2^n, \dots \,\}$. 	Let $\mathrm{Per}(f)$ denote the set of all periods of cycles of $f$. The \emph{Sharkovsky Theorem} may then be stated as follows:
		
		\begin{theorem}[\cite{shatr}]\label{t:shar}
			Let $f:[0,1] \to [0,1]$ be a continuous map. If $m,n \in \mathbb{N}$ with $m \sha n$ and $m \in \mathrm{Per}(f)$, then $n \in \mathrm{Per}(f)$. Consequently, there exists some $k \in \mathbb{N} \cup \{2^\infty\}$ such that 
			$\mathrm{Per}(f) = Sh(k)$. Conversely, for every $k \in \mathbb{N} \cup \{2^\infty\}$, there exists a continuous map $f:[0,1] \to [0,1]$ satisfying $\mathrm{Per}(f) = Sh(k)$.	
		\end{theorem}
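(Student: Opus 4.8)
The plan is to treat the two assertions separately: the \emph{forcing} implication and the \emph{realization} claim. The whole forcing machinery rests on the elementary theory of \emph{covering relations} coming from the Intermediate Value Theorem. For compact intervals $I,J\subseteq[0,1]$ I would write $I\RA J$ to mean $J\subseteq f(I)$. First I would prove two workhorse lemmas: (i) if $I\RA J$ then $I$ contains a compact subinterval sent \emph{onto} $J$ by $f$; and (ii) any loop $I_0\RA I_1\RA\cdots\RA I_{n-1}\RA I_0$ carries a point $x$ with $f^n(x)=x$ and $f^i(x)\in I_i$ for all $i$. Statement (ii) follows by using (i) to pull the loop back into a nested chain of subintervals and then applying a fixed-point argument to $f^n$ on the innermost one; a fixed point of $f$ itself exists as soon as some $I\RA I$ occurs, in particular whenever $f$ has a periodic point at all.

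Next I would reduce everything to the combinatorial type of a single orbit. Given a period-$m$ orbit $P$, label its points $p_1<\cdots<p_m$ and call the closed intervals $[p_i,p_{i+1}]$ the \emph{basic intervals}; the permutation $f|_P$ together with the IVT determines a directed \emph{transition graph} on these intervals under $\RA$. By lemma (ii), each loop of length $n$ produces a point of period dividing $n$, and each \emph{primitive} (non-repeating) loop produces a point of period exactly $n$. The task thus becomes purely graph-theoretic: to show that the transition graph of a period-$m$ orbit contains primitive loops of every length $n$ with $m\sha n$.

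The heart of the argument is the odd case. For odd $m\ge 3$ one identifies the extremal \emph{\v Stefan pattern}, whose transition graph contains a distinguished cycle $J_1\RA J_2\RA\cdots\RA J_{m-1}\RA J_1$ together with a self-loop $J_1\RA J_1$ and suitable back-edges, from which one reads off primitive loops of every length $n$ with $m\sha n$. The step that makes this work in general — and which I expect to be the main obstacle — is showing that the graph of an \emph{arbitrary} period-$m$ orbit always contains this minimal loop-generating skeleton, which requires a careful case analysis of the cyclic permutation realized on $P$. The remaining periods $2^k q$ (with $q$ odd) are then handled by renormalization: passing to $g=f^2$ trades a period-$2^k q$ point of $f$ for a period-$2^{k-1}q$ point of $g$, so an induction on the power of two transfers the information down to the odd case, the doubling in the Sharkovsky order mirroring precisely this passage to $f^2$.

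For the converse I would exhibit explicit models. For each finite $k$ I would construct a piecewise-linear (truncated tent, in the spirit of \v Stefan) map whose orbit combinatorics realize exactly $Sh(k)$, and for $k=2^\infty$ a period-doubling map whose nonwandering set is an adding machine on a Cantor set, so that its only periods are the powers of two. Verifying that these examples carry \emph{no} spurious periods combines the explicit combinatorics of the models with the forcing direction already established.
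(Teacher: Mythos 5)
The paper offers no proof for you to be compared against: Theorem~\ref{t:shar} is Sharkovsky's classical theorem, stated with a citation to \cite{S,shatr}, so your proposal can only be assessed on its own merits. Its overall architecture --- covering relations and the loop-to-periodic-point lemma, the Markov graph of a single orbit, \v{S}tefan cycles for the odd case, renormalization through $f^2$ for periods $2^k q$, and truncated tent maps plus an adding machine for the realization half --- is indeed the standard route (as in \cite{bgmy80} or \cite{alm00}).

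There is, however, a genuine gap at the step you yourself flag as the main obstacle, and it is not an obstacle to be overcome but a false statement. It is \emph{not} true that the transition graph of an arbitrary period-$m$ orbit contains the \v{S}tefan skeleton (an $(m-1)$-cycle of distinct vertices carrying a self-loop and alternating back-edges), and no case analysis of cyclic permutations will establish it. Concretely, take a period-$9$ orbit with a block structure over a period-$3$ orbit: three spatially separated blocks of three consecutive points, permuted cyclically by $f$. For the $P$-linear map, each block's convex hull maps exactly onto the next one, so every edge leaving a within-block basic interval ends at a within-block interval of the next block, and no within-block interval can be reached again after visiting a ``bridge'' interval between blocks. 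Consequently every cycle of distinct vertices in this graph either stays among the six within-block vertices (and has length $3$ or $6$) or among the at most two bridge vertices; there is no cycle through $8$ distinct vertices at all, hence no \v{S}tefan skeleton --- even though this orbit, by Sharkovsky's theorem, does force every period it must. The classical proofs circumvent exactly this: one does not analyze an arbitrary orbit, but an orbit of \emph{minimal} odd period $q>1$ of $f$, and it is minimality (any non-\v{S}tefan structure would generate a shorter odd period, a contradiction) that forces the spiral pattern; one then concludes via $Sh(m)\subseteq Sh(q)$ since $q\leqslant m$ implies $q \sha m$ or $q=m$. Alternatively, the Burns--Hasselblatt proof builds a \v{S}tefan-type chain of intervals directly from a growing nested sequence, not from the basic intervals of the given orbit.

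Two smaller points. First, your claim that a primitive (non-repetitive) loop of length $n$ yields a point of period exactly $n$ is not automatic: the periodic point produced by your lemma (ii) may be an endpoint of the basic intervals, i.e.\ a point of $P$ itself, whose period is $m$; every careful treatment must rule this out or absorb it. Second, in the renormalization step the passage back from periods of $f^2$ to periods of $f$ (a period-$p$ point of $f^2$ has $f$-period $p$ or $2p$) requires nontrivial bookkeeping to land on exactly the periods demanded by the ordering, including the pure powers of two; as written you gloss over it. The realization half is fine as an outline.
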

		
		Theorem \ref{t:shar} elucidates a hidden rich combinatorial framework which controls the disposition of periodic orbits of a continuous interval map and led to the inception of a new direction of research known as \emph{combinatorial dynamics}.  Also, Theorem \ref{t:shar}  introduces the notion of a \emph{forcing relation}. Specifically, if $m \sha n$, then the existence of a cycle of period $m$ for an interval map necessarily implies the existence of a cycle of period $n$. In this way, the theorem reveals how different “\emph{types}” of cycles (where the “\emph{type}” refers to the \emph{period}) are interconnected through \emph{forcing}.

		Building on this, several avenues of research naturally emerge. One direction is to establish a more “\emph{refined}” framework describing the coexistence of cycles,  than that offered by Theorem \ref{t:shar}. Now, the finest possible classification of cycles is through their \emph{cyclic permutation}—that is, the \emph{cyclic ordering} induced by how the map permutes the points of the cycle when arranged from left to right. As it turns out, classifying \emph{cycles} in this manner is too detailed and doesn't yield a transparent picture (see \cite{Ba}).  This motivated the development of a  middle-of-the-road way of describing \emph{cycles}: \emph{rotation theory}.

		The idea of \emph{rotation numbers} originated with Poincaré in his study of circle homeomorphisms (see \cite{poi}). It was later extended to degree-one circle maps by Newhouse, Palis, and Takens \cite{npt83}, and subsequently explored in works such as \cite{bgmy80, ito81, cgt84, mis82, mis89, almm88}. (See chapters 2 and 3 of \cite{alm00} for comprehensive references.) In a broad setting, rotation numbers may be introduced as follows:
		
		\begin{definition}[\cite{mz89,zie95}]
			Let $X$ be a \emph{compact metric space} with a Borel $\sigma$-algebra, $\phi:X\to\mathbb R$ be a
			\emph{bounded measurable function} (often called an \emph{observable}) and 
			$f:X\to X$ be a \emph{continuous} map. Then for any $x \in X$ the set
			$I_{f,\phi}(x)$, of all \emph{sub-sequential limits} of the sequence
			$ \left \{ {\frac1n} \sum^{n-1}_{i=0}\phi(f^i(x)) \right \}$ is called the {\it
				$\phi$-rotation set} of $x$.  If $I_{f,\phi}(x)=\{\rho_\phi(x)\}$ is a singleton, then the
			number $\rho_\phi(x)$ is called the {\it $\phi$-rotation number} of
			$x$. 
		\end{definition}
		It is easy to see that the $\phi$-\emph{rotation set}, $I_{f,\phi}(x)$ is a closed
		interval for all $x \in X$. The union of all $\phi$-\emph{rotation sets} of all points of $X$ is called
		the \emph{$\phi$-rotation set} of the map $f$ and is denoted by
		$I_f(\phi)$.   If $x$ is an $f$-periodic point of period $n$ then its {\it $\phi$-rotation number} 
		$\rho_\phi(x)$ is well-defined, and a related concept of the
		\emph{$\phi$-rotation pair} of $x$ can be introduced: the pair
		$({\frac1n}\sum^{n-1}_{i=0}\phi(f^i(x)), n)$ is the
		\emph{$\phi$-rotation pair} of $x$. 
		
		Another important direction inspired by Theorem \ref{t:shar} is its extension to more complex spaces. In this paper, we pursue both directions simultaneously. 	A \emph{triod} $\tau$ is defined as $\tau = \{ z \in \mathbb{C}: z^{3} \in [0,1]\}$. Geometrically, it may be viewed as a continuum formed by three copies of $[0,1]$,  joined at a common endpoint, called the \emph{branching point} $``a"$. Each connected component of $\tau \setminus \{a\}$ is called a \emph{branch} of $\tau$.  The set of periods of cycles for a continuous map $f: \tau \to \tau$, where the central \emph{branching point} $a$ remains fixed, was studied in~\cite{alm98,Ba}. A more detailed account was provided in~\cite{almnew}, where it was demonstrated that the set of possible periods can be represented as union of ``initial segments'' of certain linear orderings, each associated with rational numbers in the interval $(0,1)$ having denominators not exceeding $3$. These orderings were defined on specific subsets of the rationals. However, this phenomenon was only empirically observed and lacked a theoretical proof.  Finally in 2001, Blokh and Misiurewicz (see~\cite{BMR}),  introduced \emph{rotation theory} for \emph{triods} and provided a coherent justification of the previously observed phenomenon.  

Building up on the results obtained in \cite{BMR}, the concept of \emph{triod-twists}—the \emph{simplest} cycles associated with a prescribed \emph{rotation number} $\rho$—was introduced in~\cite{BB5}. In~\cite{BB5}, such cycles were systematically studied, leading to their complete characterization. Moreover, the dynamics of all possible \emph{unimodal triod-twist} cycles corresponding to a given \emph{rotation number} were also described. In this paper, we continue the investigation of maps on $\tau$ in the framework developed in \cite{BMR} and \cite{BB5}, aiming to establish counterparts of the classical results for interval maps. 

In particular, Blokh and Misiurewicz studied  \emph{exact patterns} for interval maps in \cite{BM3} and defined a linear order on $\mathbb{N}$ that reflects the forcing relations among these patterns. We demonstrate that an analogous phenomenon persists for maps on $\tau$, although the arguments are significantly more delicate and the resulting structure is markedly more complex. We next describe our approach in greater detail.
	
  We consider the set $\mathcal{U}$ of all continuous maps of $\tau$ into itself for which the \emph{central point} $a$ of $\tau$ is the \emph{unique} fixed point. We write $ x>y$ if $x$ and $y$ lie on the same branch of $\tau$ and $x$ is farther away from $a$ than $y$; write $ x \geqslant y$ if $x>y$ or $x=y$. 	We call two \emph{cycles} $P$ and $Q$ on $\tau$ \emph{equivalent} if there exists a homeomorphism $h: [P] \to [Q]$ \emph{conjugating} $P$ and $Q$ and \emph{fixing} \emph{branches} of $\tau$. The  \emph{equivalence classes} of \emph{conjugacy} of a \emph{cycle} $P$ is called the \emph{pattern} of $P$. A cycle $P$ of a map $f \in \mathcal{U}$ is said to \emph{exhibit} a \emph{pattern} $A$ or is of \emph{pattern} $A$ or is a \emph{representative} of the \emph{pattern} $A$ in $f$ if $P$ belongs to the \emph{equivalence class} $A$. A \emph{pattern} $A$ \emph{forces} a \emph{pattern} $B$ if and only if any map $f \in \mathcal{U}$ with a \emph{cycle} of \emph{pattern} $A$ has also a \emph{cycle} of \emph{pattern} $B$. It follows (see \cite{alm98,BMR}) that if a \emph{pattern} $A$ forces a \emph{pattern} $B \neq A$, then $B$ doesn't \emph{force} $A$.  We say that a \emph{cycle} $P$ \emph{forces} a \emph{cycle} $Q$ if the \emph{pattern} \emph{exhibited} by $P$ \emph{forces} the \emph{pattern} \emph{exhibited}  by $Q$.  We call a \emph{cycle} and its \emph{pattern} \emph{primitive} if each of its points lies on a different \emph{branch} of $\tau$.

	A map $f_P: \tau \to \tau$ is called $P$-\emph{linear} for a cycle $P$ on $\tau$,  if it \emph{fixes} $a$, is \emph{affine} on every component of $[P] - (P \cup \{ a\})$ and also \emph{constant} on every component of $\tau - [P]$ where $[P]$ is the convex hull of $P$. The following result provides a particularly elegant characterization of all \emph{patterns} that are \emph{forced} by a given \emph{pattern} $A$.
	
	\begin{theorem}[\cite{alm98,BMR}]\label{forcing}
		Let $f$ be a $P$-linear map, where $P$ is a cycle of pattern $A$.  
		Then a pattern $B$ is forced by $A$ if and only if $f$ possesses a cycle $Q$ of pattern $B$.
	\end{theorem}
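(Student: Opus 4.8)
The equivalence splits into two implications of very unequal difficulty, so the plan is to dispatch the easy one quickly and reserve the real work for the converse. For the forward implication---if $B$ is forced by $A$ then $f$ has a cycle of pattern $B$---I would argue straight from the definition of forcing. The $P$-linear map $f$ fixes $a$ and carries $P$ itself as a cycle of pattern $A$; after a standard perturbation supported in the interiors of the basic edges (and away from $a$) I may assume $f \in \mathcal{U}$, i.e.\ that $a$ is its only fixed point, without destroying any cycle, since every cycle of $f$ traces a transversal loop in its Markov graph and therefore persists. Once $f \in \mathcal{U}$ realizes $A$, the definition of forcing applied to $f$ yields a cycle of pattern $B$ immediately.

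The substantive direction is the converse: assuming $f$ possesses a cycle $Q$ of pattern $B$, show that \emph{every} $g \in \mathcal{U}$ with a cycle of pattern $A$ also carries a cycle of pattern $B$. The guiding principle is that $f$ is the \emph{minimal} model for $A$. I would first build the Markov graph of $f$, whose vertices are the closures of the components of $[P] - (P \cup \{a\})$ (the basic edges) and which has a directed edge $I \to J$ exactly when $f(I) \supseteq J$. Because $f$ is affine on each basic edge and constant off $[P]$, its image on an edge $[P_i,P_j]$ is precisely the arc of $\tau$ joining $f(P_i)=P_{\sigma(i)}$ to $f(P_j)=P_{\sigma(j)}$, so the covering relations are read off directly from the combinatorial data of the pattern $A$. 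The cycle $Q$ of pattern $B$ then corresponds to a loop $I_0 \to I_1 \to \dots \to I_k = I_0$ in this graph.

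Now let $g \in \mathcal{U}$ be arbitrary with a cycle $P'$ of pattern $A$. Since $P'$ has the same pattern, its points sit in the same relative order on the branches and are permuted identically, so the endpoints of each basic edge $I'_j$ of $[P']$ map under $g$ to the same combinatorial positions as for $f$. As $g$ is continuous and $I'_j$ is an arc, $g(I'_j)$ is a connected subtree containing both image endpoints, hence contains the whole arc joining them; consequently $g$ realizes at least the same covering relations $g(I'_j) \supseteq I'_{j+1}$ as $f$ does, and the Markov graph of $f$ embeds into the covering graph of $g$. Applying the covering (itinerary) lemma for tree maps to the loop above, $g^{k}$ acquires a periodic point $q$ that follows the edge-itinerary $I'_0 \to I'_1 \to \dots \to I'_0$ prescribed by $Q$.

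The final and most delicate step is to confirm that $q$ has pattern \emph{exactly} $B$: one must check that the orbit of $q$ visits the branches in the cyclic order dictated by $B$, respects the within-branch ordering encoded by which consecutive points of $P'$ bound each edge, and does not collapse onto a point of $P'$ or onto the branching point. This is precisely where the triod geometry makes the argument harder than on the interval, since a single basic edge whose image sweeps through $a$ covers edges lying on several distinct branches, so the loop does not by itself pin down the branch transitions. Controlling these transitions---and excluding degenerate orbits that fold onto $a$---is the main obstacle, and I expect it to require the rotation-number bookkeeping ($\rho < \tfrac13$, $\rho = \tfrac13$, $\rho > \tfrac13$) that organizes the rest of the paper. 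Once the itinerary of $q$ is matched to that of $Q$ edge by edge, the two patterns coincide; as $g$ was arbitrary, $B$ is forced by $A$, completing the converse.
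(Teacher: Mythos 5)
A preliminary remark: this paper never proves Theorem~\ref{forcing}; it is imported, with citation, from \cite{alm98,BMR}, so your attempt can only be judged against the proofs in those references (see also \cite{alm00}), not against anything in this manuscript. Measured that way, your converse direction assembles the correct preliminary machinery --- the Markov graph of the $P$-linear map $f$, the transfer of its covering relations to an arbitrary $g\in\mathcal{U}$ having a cycle $P'$ of pattern $A$, and the itinerary lemma (Theorem~\ref{theorem:interval:graph}) --- but it stops precisely where the content of the theorem begins. The step you defer (``confirm that $q$ has pattern exactly $B$'') is not a loose end; it \emph{is} the theorem. Concretely: the number of basic intervals of $[P']$ is at most the period of $A$, so already whenever the period of $B$ exceeds the period of $A$, the pigeonhole principle forces two points of $Q$ into the same basic interval, and correspondingly two iterates of $q$ under $g$ into the same basic interval $J'$ of $[P']$. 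The covering relations say nothing about the relative order of these iterates inside $J'$, because $g$ --- unlike $f$ --- need not be monotone on $J'$; hence the itinerary does not determine the pattern, and your closing claim that matching itineraries edge by edge makes the patterns coincide is false as stated (one must also rule out that $q$ has period a proper divisor of $k$). Moreover, the repair you propose, rotation-number bookkeeping, cannot work even in principle: many distinct patterns of the same period share one rotation pair, so no accounting of $\rho$ can recover within-branch orderings; and it is methodologically backwards, since Theorem~\ref{forcing} (proved in \cite{alm98}, predating \cite{BMR}) is part of the foundation on which the rotation theory used in this paper is built. The references close this gap by different means --- comparing $g$ with its own $P'$-linear model and making a careful choice of the periodic point realizing the loop, with an inductive argument recovering the order of its orbit --- and none of that appears in your proposal.

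The forward direction also has a gap, though a smaller one: your perturbation moves cycles in the wrong direction. Perturbing $f$ to some $\tilde f\in\mathcal{U}$ ``without destroying any cycle'' shows that cycles of $f$ survive in $\tilde f$; applying the definition of forcing to $\tilde f$ then yields a cycle of pattern $B$ \emph{for $\tilde f$}, whereas the theorem demands one for $f$ itself. For that you would need the reverse containment --- every cycle pattern of $\tilde f$ is already a cycle pattern of $f$ --- and small perturbations do not provide it, since they can create cycles. The honest route is to show that for any pattern $A$ realizable by a map in $\mathcal{U}$ the $P$-linear map automatically lies in $\mathcal{U}$ (an extra fixed point of $f$ in a basic interval arises from a crossing that any map realizing $A$ would have to reproduce, contradicting realizability in $\mathcal{U}$), after which the forward implication is immediate from the definition of forcing.
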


	Let $ f \in \mathcal{U}$ and  $P \subset \tau- \{a \}$  be finite. By an \emph{oriented graph} corresponding to $P$, we shall mean a graph $G_P$, whose vertices are elements of $P$ and arrows are defined as follows. For a $x,y \in P$, we will say that there is an \emph{arrow} from $x$ to $y$ and write $x \to y$ if there exists $ z \in \tau$ such that $ x \geqslant z$ and $ f(z) \geqslant y$. We will refer to a \emph{loop}  in the \emph{oriented graph} $G_P$ as a \emph{point} \emph{loop} in $\tau$ to distinguish them from \emph{loops} of \emph{intervals} which we define in Section \ref{preliminaries}. We call a \emph{point} \emph{loop} in $\tau$ \emph{elementary} if it passes through every \emph{vertex} of $G_P$ at most once. If $P$ is a cycle of period $n$, then the loop $\gamma : x \to f(x) \to f^2(x) \to f^3(x) \to \dots f^{n-1}(x) \to x$, $x \in P$ is called the \emph{fundamental point loop associated with} $P$.  
	
	Now, we are in a position to state the \emph{rotation theory} for \emph{triods} as introduced in \cite{BMR}. Let $ f \in \mathcal{U}$ , $P \subset \tau- \{a\}$ be finite and the \emph{oriented graph} $G_P$ given by $P$ is \emph{transitive} (that is there is a \emph{path} from every \emph{vertex} to every \emph{vertex}). If $P$ is a \emph{cycle}, it is easy to see that $G_P$ is always \emph{transitive}. Call each \emph{component} of $[P] - (P \cup \{ a\})$, a $P$-\emph{basic interval} on $\tau$. We denote the set of all \emph{arrows} of the \emph{oriented graph} $G_P$ by $A$.   
	
	In our \emph{model} of  $\tau$,   we consider $\tau$ as being \emph{embedded} into the plane with the \emph{central branching point} at the \emph{origin} and \emph{branches} being segments of straight-lines. Let us name the \emph{branches} of $\tau$ in the anticlockwise direction such that $B = \{ b_i | i = 0,1,2 \}$  (addition in the subscript of $b$  is modulo 3) is the collection of all its \emph{branches}. Let $A$ be the set of all \emph{arrows} of the \emph{oriented graph} $G_P$. We define a \emph{displacement function} $ d : A \to \mathbb{R}$  by $d(u \to v) = \frac{k}{3}$, where $ u \in b_i$ and $v \in b_j$ and $ j = i +k $ (modulo 3). For a \emph{point loop} $\Gamma$ in $G_P$,  denote by $ d(\Gamma)$ the sum of the values of the \emph{displacement} $d$ along the loop. In our model of $\tau$, this number tells us how many times we \emph{revolved} around the origin in the anticlockwise sense. Thus, $ d(\Gamma)$ is an integer. We call  $ rp(\Gamma) = (d(\Gamma), |\Gamma|)$  and $ \rho(\Gamma) = \frac{d(\Gamma)}{ |\Gamma|}$ as the \emph{rotation pair} and \emph{rotation number} of $\Gamma$  respectively where $|\Gamma|$ denotes length of $\Gamma$.  The closure of the set of \emph{rotation numbers} of all \emph{loops} of $G_P$ is called the \emph{rotation set} of $G_P$,  denoted by $L(G_P)$. By \cite{zie95}, $L(G_P)$ is equal to the smallest interval containing the \emph{rotation numbers} of all \emph{elementary loops} of $G_P$.

	Following the notations in \cite{BMR}, a \emph{rotation pair} $rp(\Gamma) = (mp, mq)$, where $p,q,m \in \mathbb{N}$ with $g.c.d(p,q)=1$, can be represented in the form $mrp(\Gamma) = (t, m), \quad \text{where } t = \frac{p}{q}$. The pair $(t, m)$ is referred to as the \emph{modified rotation pair (mrp)} of the point loop $\Gamma$.

	The \emph{rotation number}, \emph{rotation pair}, and \emph{modified rotation pair} of a cycle $P$ are defined to be those of its \emph{fundamental point loop} $\Gamma_P$.  Similarly, the corresponding quantities for a \emph{pattern} $A$ are defined as those of any cycle $P$ that \emph{exhibits} $A$.  The \emph{rotation interval forced} by a \emph{pattern} $A$ is defined as the \emph{rotation set} $L(G_P)$ of the \emph{oriented graph} $G_P$ associated with a cycle $P$ which \emph{exhibits} $A$.   Finally, we denote by $mrp(A)$,  the set of all \emph{modified rotation pairs} of \emph{patterns} that are \emph{forced} by the \emph{pattern} $A$.

	\begin{figure}[H]
		\caption{Schematic representation of \emph{modified rotation pairs} (\emph{mrp}) on the real line with attached \emph{prongs}}
		\centering
		\includegraphics[width=0.5\textwidth]{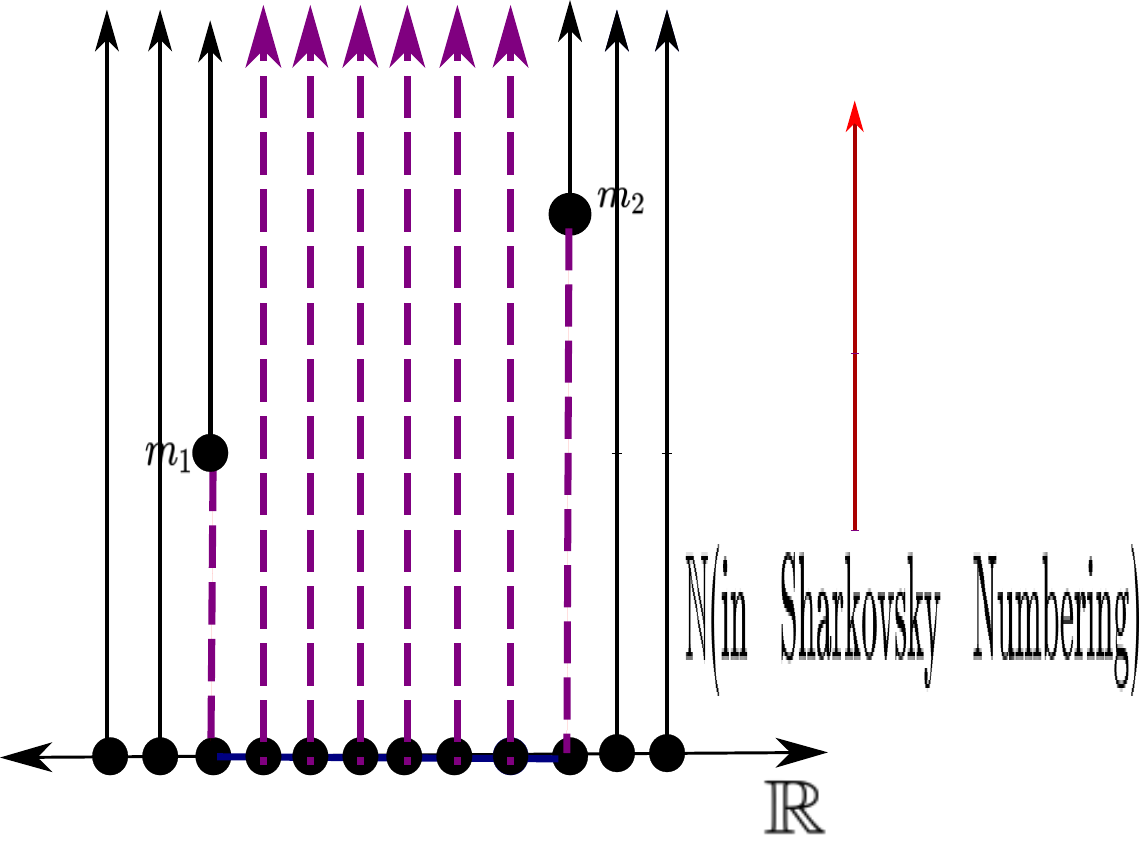}
		\label{convex_hull}
	\end{figure}

	\emph{Modified rotation pairs} admit a convenient geometric interpretation (see Figure~\ref{convex_hull}).  Consider the real line, and imagine that at each rational point a \emph{prong} is attached, while irrational points are equipped with degenerate prongs.  On the prong corresponding to each rational point, mark the set $\mathbb{N} \cup \{2^{\infty}\}$, ordered according to the \emph{Sharkovsky ordering} $\sha$, with $1$ placed nearest to the real line and $3$ placed farthest from it.  All points lying directly on the real line are labeled by $0$.  The union of the real line together with all its attached prongs will be denoted by $\mathbb{M}$.

	A \emph{modified rotation pair} $(t, m)$ is then represented by the element of $\mathbb{M}$ corresponding to the number $m$ on the prong attached at $t$.  
	No actual \emph{rotation pair}, however, corresponds to $(t, 2^{\infty})$ or to $(t, 0)$.  For two elements $(t_1, m_1)$ and $(t_2, m_2)$ in $\mathbb{M}$, the \emph{convex hull} 
	$[(t_1, m_1), (t_2, m_2)]$ is defined as the set of all \emph{modified rotation pairs} $(t, m)$ such that either $t$ lies strictly between $t_1$ and $t_2$, or $t = t_i$ and $m \in Sh(m_i)$ for $i = 1, 2$.

	\begin{definition}
		A \emph{pattern} $A$ for a map $f \in \mathcal{U}$ is called \emph{regular} if $A$ doesn't force a \emph{primitive pattern} of \emph{period} $2$; call a \emph{cycle} $P$ \emph{regular} if it \emph{exhibits} a \emph{regular pattern}. 
	\end{definition}
	
	By \emph{transitivity} of \emph{forcing}, \emph{patterns} forced by a \emph{regular pattern} are \emph{regular}. A map $f \in \mathcal{U}$ will be called \emph{regular} if all its \emph{cycles} are \emph{regular}. Let $\mathcal{R}$ be the collection of all \emph{regular} maps $ f \in \mathcal{U}$. By Theorem \ref{forcing}, if $P$ is \emph{regular}, then the $P$-\emph{linear} map $f$ is \emph{regular}.

	\begin{theorem}[\cite{BM2}]\label{result:1}

		Let $A$ be a regular pattern for a map $f \in \mathcal{U}$. Then there are patterns $B$ and $C$ with modified rotation pairs $(t_1, m_1)$ and $(t_2,m_2)$ respectively such that $mrp(A) = [(t_1, m_1), (t_2, m_2)]$. 
	\end{theorem}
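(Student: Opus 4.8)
The plan is to reduce the statement to a combinatorial analysis of \emph{point loops} in the finite \emph{oriented graph} $G_P$ and then invoke Sharkovsky-type theorems for the induced dynamics. By Theorem~\ref{forcing}, a pattern $B$ is forced by $A$ precisely when the $P$-linear map $f = f_P$ carries a cycle of pattern $B$, so it suffices to determine which \emph{modified rotation pairs} are realized by cycles of $f_P$; these in turn correspond to non-repetitive \emph{point loops} in $G_P$. Since $P$ is a finite cycle, $G_P$ has only finitely many \emph{elementary loops}, each with a rational \emph{rotation number}, and by the cited result of Ziemian the \emph{rotation set} $L(G_P)$ is the smallest interval containing them. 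Consequently $L(G_P) = [\alpha, \beta]$ with $\alpha = p_1/q_1$ and $\beta = p_2/q_2$ rational (in lowest terms); these furnish the two values $t_1, t_2$ of the statement. The remaining argument splits into an interior part and an endpoint part.

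For the interior, I would fix a rational $t = p/q \in (\alpha, \beta)$ in lowest terms and show that every multiplicity $m \in \mathbb{N}$ occurs, i.e. that $f_P$ has a cycle of \emph{period} $mq$ and \emph{displacement} $mp$. Because $t$ lies strictly inside $L(G_P)$, there are elementary loops $\Gamma_-$ and $\Gamma_+$ with $\rho(\Gamma_-) < t < \rho(\Gamma_+)$, and by transitivity of $G_P$ these can be spliced together. A Frobenius-type concatenation argument then produces, for each large $m$, a non-repetitive loop of length $mq$ and displacement $mp$, hence a genuine cycle of period $mq$ and rotation number $t$; the splicing of $\Gamma_-$ and $\Gamma_+$ moreover yields a rotational horseshoe at rotation number $t$, whose periodic orbits supply the remaining small multiplicities. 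This establishes that for interior $t$ the full prong $\{(t,m): m \in \mathbb{N}\}$ lies in $mrp(A)$, matching the interior clause of the \emph{convex hull} definition.

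For the endpoints I would work at $t = \alpha$, the case $t = \beta$ being symmetric. Here the set of admissible multiplicities is no longer everything: the elementary loop $\Gamma_\alpha$ attaining the minimal rotation number is extremal, so there is no slack on one side. The strategy is to pass to the first-return map on the union of \emph{basic intervals} visited by $\Gamma_\alpha$ — equivalently, to the $q_1$-fold iterate $f^{q_1}$ restricted to an appropriate branch — and to recognize it as a continuous interval map. The multiplicities realizable at $\alpha$ then correspond exactly to the periods of this induced map, so the classical Sharkovsky Theorem~\ref{t:shar} applies and shows that the set of such $m$ is a Sharkovsky tail $Sh(m_1)$ for a minimal $m_1$. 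This is where the regularity hypothesis enters: it guarantees that $A$ does not force a \emph{primitive pattern} of period $2$ (rotation number $1/2$), which is what permits the induced system to be set up as an honest interval map and excludes the exceptional boundary behavior that would otherwise break the clean Sharkovsky description. Choosing $B$ and $C$ to be patterns realizing $(\alpha, m_1)$ and $(\beta, m_2)$ then yields $mrp(A) = [(t_1, m_1), (t_2, m_2)]$.

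The step I expect to be the main obstacle is this endpoint analysis. Showing that the realizable multiplicities at a boundary rotation number form precisely a Sharkovsky tail requires constructing the induced interval map carefully on the triod, controlling how the \emph{branching point} $a$ and the three \emph{branches} interact with the extremal loop, and verifying both directions — that every $m \in Sh(m_1)$ is forced and that no Sharkovsky-smaller multiplicity occurs. Making the reduction to Theorem~\ref{t:shar} rigorous, and pinning down exactly where regularity is needed to exclude the period-$2$ primitive obstruction, is the delicate heart of the proof; by comparison, the interior argument is a relatively standard concatenation-and-horseshoe construction.
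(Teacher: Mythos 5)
The first thing to note is that the paper contains no proof of this statement: Theorem \ref{result:1} is imported verbatim from Blokh and Misiurewicz (cited as \cite{BM2}; the triod version is developed in \cite{BMR}), so there is no in-paper argument to compare yours against. Judged on its own, your proposal does reproduce the correct overall skeleton of the known proof: the reduction via Theorem \ref{forcing} and Theorem \ref{loops:orbits:connection:1} to point loops of $G_P$, the identification of $L(G_P)$ as a closed interval with rational endpoints via Ziemian's result, and the dichotomy between interior rationals (full prongs) and endpoint rationals (Sharkovsky tails).

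However, both of your key steps have genuine gaps. For the interior: splicing $\Gamma_-$ and $\Gamma_+$, with or without a rotational horseshoe, only yields loops whose length and displacement are non-negative integer combinations of the building blocks and connecting paths. This realizes $(t,m)$ for all sufficiently large $m$ but cannot produce the small multiplicities, in particular $m=1$, i.e.\ a cycle with rotation pair exactly $(p,q)$: the system $a n_- + b n_+ = q$, $a d_- + b d_+ = p$ generally has no solution in non-negative integers. This is not a technicality. An abstract transitive graph whose elementary loops have rotation pairs $(1,3)$ and $(2,3)$ has rotation set $[\tfrac{1}{3},\tfrac{2}{3}] \ni \tfrac{1}{2}$, yet every loop in it has length divisible by $3$, so no loop realizes $(\tfrac{1}{2},1)$. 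Hence interior realization cannot follow from transitivity plus Ziemian alone; it uses the special structure of graphs $G_P$ coming from regular triod cycles (in \cite{BMR} this is where the green/black/red code machinery does real work). For the endpoint: your claim that the multiplicities realizable at $\alpha$ correspond exactly to the periods of a first-return interval map (the $q_1$-fold iterate on a branch, or on the basic intervals visited by $\Gamma_\alpha$) is precisely the hard statement, and the construction you sketch does not support it --- cycles of $f_P$ with rotation number $\alpha$ need not shadow the extremal loop $\Gamma_\alpha$ or remain in the intervals it visits, so neither inclusion (every $m \in Sh(m_1)$ occurs; no $m$ outside the tail occurs) is established. In the literature this step is carried out through twist patterns and block structures over them, i.e.\ the machinery this paper recalls as Theorems \ref{add:3} and \ref{add:4}, not through a naive induced interval map. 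Finally, regularity is not a hypothesis localized to the endpoint analysis: it underlies the entire canonical setup (black loops of length $3$, canonical ordering of the branches, Theorem \ref{black:loop:length:3}) on which displacements, colors, and the whole rotation formalism depend.
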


	\begin{definition}
		A \emph{regular pattern} $\pi$ is called a \emph{triod twist} if it doesn't force another \emph{pattern} with the same \emph{rotation number}. 
	\end{definition}

\begin{figure}[H]
	\caption{\emph{Bifurcation diagram} illustrating the change in \emph{color} of points with varying \emph{rotation number}~$\rho$.}
	\centering
	\includegraphics[width=0.6\textwidth]{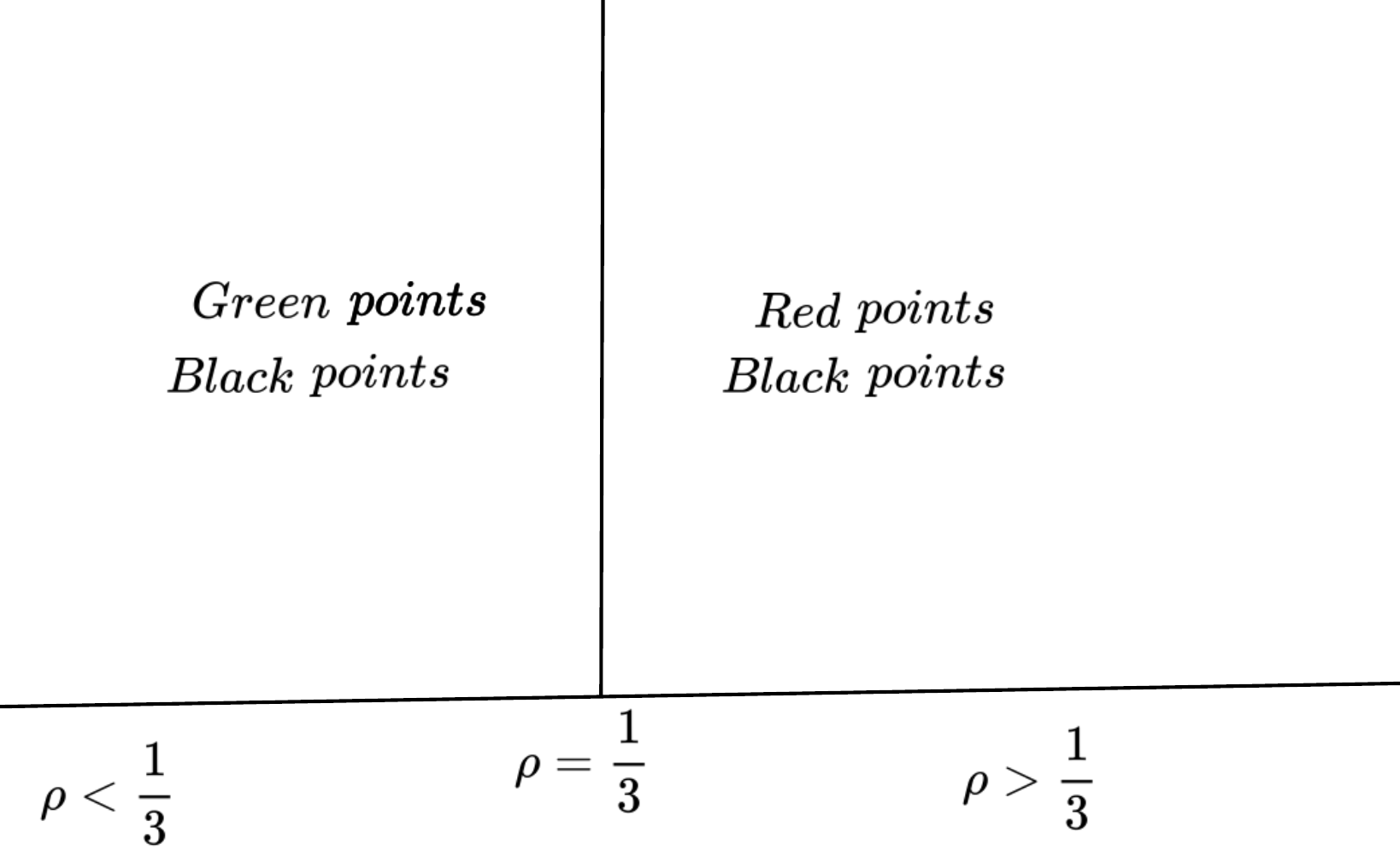}
	\label{drawing2}
\end{figure}

In describing the \emph{arrows} in the \emph{oriented graph} $G_P$ corresponding to a finite set $P \subset \tau \setminus \{a\}$, we adopt the \emph{color convention} introduced in \cite{BMR}.  
For any \emph{directed edge (arrow)} $u \to v$ in $G_P$, where $u, v \in P$, we assign the following \emph{colors} according to the \emph{displacement} value $d(u \to v)$: \emph{green} if  $d(u \to v) = 0$,  \emph{black} if  $d(u \to v) = \tfrac{1}{3}$,  \emph{red} if  $d(u \to v) = \tfrac{2}{3}$.
If $P$ is a cycle of a map $f \in \mathcal{R}$, the \emph{color} of a point $x \in P$ is defined to be the \emph{color} of the arrow $x \to f(x)$ in its \emph{fundamental point loop}.

In \cite{BB5}, a bifurcation in the qualitative behavior of \emph{triod-twist patterns}—with respect to the \emph{color} of their points—was identified at the \emph{rotation number}~$\rho = \frac{1}{3}$ (see Figure~\ref{drawing2}).

\begin{theorem}[\cite{BB5}]\label{bifurcation:one:third}\label{rot:one:third}\label{defn:triod:twist}
	Let $A$ be a triod-twist pattern with rotation number $\rho$.  
	Then:
	\begin{enumerate}
		\item If $\rho < \tfrac{1}{3}$, the pattern $A$ contains no red points.
		\item If $\rho > \tfrac{1}{3}$, the pattern $A$ contains no green points.
	\end{enumerate}
	Moreover, the pattern $\pi_3$ corresponding to the primitive cycle of period~$3$ is the unique triod-twist pattern with rotation number $\tfrac{1}{3}$.
\end{theorem}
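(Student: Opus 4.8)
The plan is to reduce all three assertions to a single combinatorial fact and then to attack that fact through the forced rotation interval. Fix a representative cycle $P$ of the pattern $A$ and let $g$, $\beta$, $r$ denote the numbers of \emph{green}, \emph{black} and \emph{red} points of $P$, so that its period is $n=g+\beta+r$. By the \emph{color convention} the displacements along the arrows are $0,\tfrac13,\tfrac23$ respectively, so the winding number of the \emph{fundamental point loop} is $d(\Gamma_P)=\tfrac13(\beta+2r)\in\mathbb{Z}$ and $\rho=\tfrac{\beta+2r}{3n}$. Consequently $\rho-\tfrac13=\tfrac{r-g}{3n}$, and the sign of $\rho-\tfrac13$ is exactly the sign of $r-g$: one has $\rho<\tfrac13\Leftrightarrow r<g$, $\rho>\tfrac13\Leftrightarrow r>g$, and $\rho=\tfrac13\Leftrightarrow r=g$. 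Everything then follows from the \emph{Main Lemma}: a \emph{triod twist} cannot contain a red point and a green point simultaneously.

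Granting the Main Lemma, part (1) is immediate: if $\rho<\tfrac13$ then $g>r\ge 0$, so a single red point ($r\ge 1$) would force $g\ge 2$ and hence the coexistence of a red and a green point, a contradiction; therefore $r=0$. Part (2) is dual: if $\rho>\tfrac13$ then $r>g\ge 0$, so $g\ge 1$ would force $r\ge 2$ and again produce both colors, whence $g=0$. For the boundary case $\rho=\tfrac13$ we have $r=g$, and the Main Lemma forces $r=g=0$, so every point of $A$ is black; the orbit then permutes the three branches cyclically as $b_0\to b_1\to b_2\to\cdots$, so its period is a multiple of $3$ and $\rho=\tfrac13$. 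Since $A$ is a twist it does not force another pattern with rotation number $\tfrac13$; combining Theorem~\ref{result:1} with the observation that $Sh(m)=\{m\}$ holds only for $m=1$, the \emph{modified rotation pair} of $A$ must sit at an endpoint of its convex hull with second coordinate $1$, i.e.\ it equals $(\tfrac13,1)$, so the period is exactly $3$. An all-black cycle of period $3$ has one point on each branch and is therefore \emph{primitive}, so its pattern is $\pi_3$; conversely the prong at $\tfrac13$ in $\mathbb{M}$ carries only $(\tfrac13,1)$ for $\pi_3$, so $\pi_3$ is itself a triod twist. This yields the uniqueness assertion.

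It remains to prove the Main Lemma, and here I would argue that the coexistence of a red and a green point places $\rho$ in the \emph{interior} of the forced rotation interval $L(G_P)=[t_1,t_2]$. Once this is established, the convex-hull description $mrp(A)=[(t_1,m_1),(t_2,m_2)]$ of Theorem~\ref{result:1} places every pair $(\rho,m)$ with $m\in\mathbb{N}$ inside $mrp(A)$; writing $(\rho,m_A)$ for the modified rotation pair of $A$ and choosing $m\neq m_A$, one obtains a forced pattern of period $mq\neq m_A q$, hence distinct from $A$, with the same rotation number $\rho$, so $A$ is not a triod twist. To locate $\rho$ strictly inside $[t_1,t_2]$, I would realize the $P$-linear map $f_P$, so that by Theorem~\ref{forcing} the forced patterns are exactly the patterns of cycles of $f_P$, and use that $L(G_P)$ is the smallest interval containing the rotation numbers of all \emph{elementary} loops of $G_P$. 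Now $\rho$ is the average of the per-arrow displacements $c_j/3\in\{0,\tfrac13,\tfrac23\}$ along $\Gamma_P$; since both a $0$ (at the green point) and a $\tfrac23$ (at the red point) occur, this average is strictly between the extreme per-arrow values. The goal is to promote these local extremes to honest loops: to build, from the covering structure around the green point, an elementary loop $\Gamma_-$ with $\rho(\Gamma_-)<\rho$, and from the red point an elementary loop $\Gamma_+$ with $\rho(\Gamma_+)>\rho$, giving $t_1<\rho<t_2$.

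The main obstacle is precisely this last construction. Producing the bracketing loops requires understanding, through the \emph{block structure} of the $P$-linear map, which $P$-basic intervals the green and red arrows cover, and then splicing short sub-paths via transitivity of $G_P$ so as to control the \emph{sign} of $\rho(\Gamma_\pm)-\rho$ rather than merely the per-step displacement that enters it. I expect the green side (the loop $\Gamma_-$) and the red side (the loop $\Gamma_+$) to require genuinely different arguments, since no relabeling of the branches of $\tau$ exchanges green and red while preserving rotation numbers: a reflection fixes green and interchanges black and red. Thus the two halves of the Main Lemma must be carried out separately, even though they combine into the single uniform statement that drives all three parts of the theorem.
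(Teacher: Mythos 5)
Your reduction is correct as far as it goes, and since the paper merely quotes this theorem from \cite{BB5} there is no internal proof to compare against; the proposal must stand on its own. The color-counting identity $\rho-\tfrac13=\tfrac{r-g}{3n}$ is right, and granting your \emph{Main Lemma} (no triod twist contains both a red and a green point), parts (1) and (2) follow immediately, while your uniqueness argument for $\rho=\tfrac13$ via Theorem~\ref{result:1} and the observation that $Sh(m)=\{m\}$ only for $m=1$ is also sound (the fact that $\pi_3$ is itself a twist, which you assert, does follow easily because the $\pi_3$-linear map satisfies $f^3=\mathrm{id}$ on $[P]$ and hence forces only $\pi_3$).

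The genuine gap is that the Main Lemma---which carries the entire combinatorial content of the theorem---is never proved. You reduce it to the claim that coexistence of a red and a green point forces $\rho$ into the \emph{interior} of $L(G_P)$, and then explicitly defer the construction of the bracketing elementary loops $\Gamma_\pm$; but that construction is not a routine completion, and your heuristic (``a $0$ and a $\tfrac23$ both occur along $\Gamma_P$, so promote these local extremes to loops'') provably fails if applied to one color at a time. Indeed, the unimodal slow twist patterns $\Lambda^{\frac{m}{n}}_k$ of Theorem~\ref{s:1} contain $n-3m\geq 1$ green points, yet, being twists, their rotation number must be an endpoint of the forced interval (by Theorem~\ref{result:1}, an interior rotation number yields forced patterns of every period $qm'$ with the same rotation number); since $L(G_P)\supseteq[\rho,\tfrac13]$ by the black $3$-loops of Theorem~\ref{black:loop:length:3}, these cycles admit \emph{no} loop with rotation number below $\rho$ despite their many displacement-$0$ arrows. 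So any valid construction must use the red and green points \emph{jointly}, which is exactly the step you acknowledge as ``the main obstacle'' and do not supply. Note also that your intermediate claim is strictly stronger than the Main Lemma: by Theorem~\ref{result:1} a pattern can sit at an endpoint $t_i$ of its forced interval and still force other patterns of the same rotation number (any $m'\in Sh(m_i)$, $m'\neq m_i$), so ``not a twist'' does not imply ``interior''; you would need to establish the stronger implication ``both colors $\Rightarrow$ interior,'' for which no argument is given. As it stands, the proposal is a correct reduction wrapped around an unproven core---a strategy, not a proof.
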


Let us now state our plan for the paper. A continuous map $f:X\to X$ on a topological space $X$ is called \emph{topologically exact} if for every nonempty open set $U\subset X$,  there exists  $n \in \mathbb{N}$ such that $f^{n}(U)=X$.  A cycle $P$ on a \emph{triod} $\tau$ is called an \emph{exact} cycle if the $P$-\emph{linear} map, $f_P$ is an \emph{exact} map.  A \emph{pattern} $\pi$  is called \emph{exact} if any cycle $P$ which \emph{exhibits} $\pi$ is an \emph{exact cycle}.   This provides a natural dichotomy for \emph{patterns} on \emph{triods}: they are either  \emph{exact} or \emph{non-exact}.

		This paper aims to explore the problem of coexistence among the periods of \emph{exact} patterns on a \emph{triod} $\tau$. This question can be addressed once we fully characterize \emph{exact patterns}  on a \emph{triod}. To this end, we employ the notion of a \emph{block structure}, originally developed by Misiurewicz for \emph{patterns} on the  circle and the  interval by Blokh and Misiurewicz (see \cite{alm00, BM3}) and later adapted  to \emph{triod patterns} (see \cite{BMR}) by the same authors. We prove in this paper that a  \emph{pattern} $\pi$ on a \emph{triod} $\tau$ is \emph{exact} if and only if it has \emph{no block structure}. This constitutes our first main result (see Section~\ref{characterization:section}, Theorem~\ref{connection:mixing:no:block:structure}).

		Next, leveraging this result together with the frameworks developed in \cite{BMR} and \cite{BB5}, we undertake a detailed study of the structural properties of \emph{exact patterns} on \emph{triods} in Section~\ref{section:properties:mixing}. As observed in the paper \cite{BB5}, a qualitative \emph{bifurcation} occurs in the family of \emph{triod-twist patterns} at the \emph{rotation number},  $\rho = \frac{1}{3}$ (see Theorem~\ref{bifurcation:one:third}). This observation motivates a natural classification of \emph{patterns}  on \emph{triods} into three categories: those with $\rho < \frac{1}{3}$, referred to as \emph{slow patterns}; those with $\rho > \frac{1}{3}$, referred to as \emph{fast patterns}; and those with $\rho = \frac{1}{3}$, called \emph{ternary patterns}. The dynamics of \emph{exact patterns}   within these classes are analyzed in Section~\ref{section:properties:mixing} (see Section~4, Theorems~\ref{block:structure:necessary}, \ref{loop:maintainance}, \ref{green:p0:image:green}, \ref{k:n:green:forces:k+1:n+3}, and~\ref{9k+3:6k+2}).

The framework developed in Section  \ref{section:properties:mixing}  enables us in Section \ref{forcing:section}, to study the \emph{forcing relations} among \emph{slow}, \emph{fast}, and \emph{ternary} \emph{exact}  \emph{patterns} separately. This yields three distinct orderings of the natural numbers that capture the \emph{forcing structure} within these three classes. Moreover, these orderings are shown to persist under small \emph{perturbations} of the map, thereby demonstrating the robustness of the forcing structure (see Section \ref{forcing:section}, Theorems \ref{forcing:mixing:green:main:theorem}, \ref{forcing:mixing:red:main:theorem} and \ref{ternary:final}).

		The organization of the paper is as follows:
		
		\begin{enumerate}
			\item In Section~2, we state all essential definitions and theorems that will be used throughout the manuscript.
			
			\item In Section~3, we prove that a \emph{pattern} on a \emph{triod} is \emph{exact}  if and only if it has \emph{no block structure}.
			
			\item In Section~4, we investigate the structural properties of \emph{exact patterns} on \emph{triods}, providing a foundation for the subsequent analysis.
			
			\item In Section~5, we apply the results obtained in Sections~3 and~4 to derive explicit orderings among the periods of \emph{slow, fast} and \emph{ternary} \emph{exact patterns} on \emph{triods}, which depict the \emph{forcing relation} among these \emph{patterns}. We also establish the \emph{stability} of this ordering.

		\end{enumerate}

		 	\section{Preliminaries}\label{preliminaries}

\subsection{Monotonicity}\label{monotonicity}

A continuous map $f: \tau \to \tau $ is said to be \emph{monotone} on a subset $U \subset \tau$ if, for every $v \in f(U)$, the preimage $f^{-1}(v)$ is a \emph{connected subset} of $U$.  A subset $U \subset \tau$ is called a \emph{lap} of $f$ if it is a \emph{maximal open subset} of $\tau$ on which $f$ is \emph{monotone}, maximality being understood with respect to set inclusion.  The number of \emph{laps}  of a map $f \in \mathcal{U}$ is referred to as the \emph{modality} of $f$.  Similarly, the \emph{modality} of a periodic orbit (or \emph{cycle}) $P$ is defined as the \emph{modality}  of the $P$-\emph{linear} map $f$ \emph{associated} with $P$.

\subsection{P-adjusted} 
	Given a cycle $P$, a map $f \in \mathcal{U}$ is said to be \emph{$P$-adjusted} if it has no other cycle, distinct from $P$, that exhibits the same \emph{pattern} as $P$.

\begin{theorem}[\cite{alm98}]\label{P:adjusted}
	For any cycle $P$ of a map $f \in \mathcal{U}$, there exists a $P$-adjusted map $g$ such that $g$ coincides with $f$ on $P$; that is, $f|_P = g|_P$.
\end{theorem}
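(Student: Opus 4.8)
The plan is to leave $f$ untouched on $P$ and to reshape it on $\tau\setminus P$ so that the combinatorics of the pattern $A$ of $P$ can be realized by only one cycle. First I would reformulate the condition to be proved. If $g\in\mathcal U$ satisfies $g|_P=f|_P$ and $Q$ is a cycle of $g$ exhibiting $A$, then by the definition of a pattern $Q$ has the same number of points on each branch as $P$ and is permuted in the identical combinatorial manner; equivalently, $Q$ realises the fundamental point loop $\Gamma_P$ of the oriented graph $G_P$. Writing $P=\{x_1,\dots,x_n\}$ with $x_{i+1}=f(x_i)$ and letting $J_i$ denote the $P$-basic interval whose closure contains $x_i$, the cycles of pattern $A$ of such a $g$ correspond bijectively to those $y\in J_1$ whose $g$-orbit follows the itinerary $J_1\to J_2\to\cdots\to J_n\to J_1$ and returns to $y$; that is, to the fixed points of the itinerary-constrained return map $R=g^{\,n}$ on $J_1$. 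Since $P$ itself corresponds to the fixed point $x_1$, it suffices to produce a $g$ for which $x_1$ is the only such fixed point.

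To do this I would exploit the freedom available off $P$. The endpoints of each closed basic interval $\overline{J_i}$ lie in $P\cup\{a\}$, so their $g$-images are already prescribed by $f|_P$; between them I am free to let $g$ be any monotone homeomorphism of $\overline{J_i}$ onto the arc of $\tau$ joining the two prescribed images. Choosing each piece strictly monotone makes $R$ strictly monotone along the loop $\Gamma_P$, with an orientation determined by the pattern. When the loop is orientation-reversing a single crossing of the diagonal is automatic; when it is orientation-preserving I would shape the pieces so that $R$ is strictly expanding as it passes through $x_1$ and stays strictly to one side of the diagonal elsewhere, again forcing $x_1$ to be the unique fixed point. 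The pieces are selected to avoid the diagonal except at crossings forced by the prescribed endpoint data, guaranteeing that $a$ remains the unique fixed point of $g$ and hence that $g\in\mathcal U$.

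The principal obstacle is the triod geometry. Because the image $g(\overline{J_i})$ may span several basic intervals and cross the branching point $a$, the return map $g^{\,n}$ is in general only piecewise monotone, and distinct itineraries through its various monotone branches could a priori each contribute a cycle of pattern $A$. The delicate step is therefore to arrange the monotone pieces so that along the prescribed itinerary of $\Gamma_P$ the composite has a single fixed point while simultaneously excluding spurious same-pattern cycles produced by the other branches of $g^{\,n}$, all without introducing a second fixed point of $g$. A more robust way to handle this is by perturbation: starting from the $P$-linear map, which already agrees with $f$ on $P$, the cycles exhibiting $A$ form a compact set, and one destroys any cycle $Q\neq P$ of pattern $A$ by a modification of $g$ localized near $Q$ and disjoint from $P$, chosen so as not to create new realisations of $\Gamma_P$; minimising the number of such cycles over admissible $g$ then yields a $P$-adjusted map. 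Establishing that these competing requirements can be met at once, rather than constructing the individual monotone pieces, which is routine, is where the real work lies.
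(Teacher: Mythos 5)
The paper offers no proof of Theorem \ref{P:adjusted}: it is imported verbatim from \cite{alm98} and used as a black box, so there is no internal argument to compare yours against; judged on its own, your proposal has two genuine gaps. The first is the opening ``reformulation''. You claim that for a map $g$ with $g|_P=f|_P$ which is monotone on basic intervals, the cycles exhibiting the pattern $A$ of $P$ correspond \emph{bijectively} to fixed points of the return map along the itinerary $J_1\to J_2\to\cdots\to J_n\to J_1$ of basic intervals adjacent to the points of $P$. But pattern equivalence only constrains the internal arrangement of $Q$ (via a branch-preserving homeomorphism between $[P]$ and $[Q]$); it says nothing about where $Q$ sits relative to $P$. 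The assertion that every same-pattern cycle shadows $P$ through these particular basic intervals is therefore a rigidity statement that itself requires proof, not a reformulation --- and it is not even well posed, since each $x_i$ abuts up to two basic intervals and your $J_i$ is never specified. Your own third paragraph then concedes that other itineraries of $g^{\,n}$ may carry same-pattern cycles, which contradicts the bijection on which your diagonal-crossing analysis rests. You also never control $\tau\setminus[P]$ (the parts of branches beyond the outermost points of $P$, and branches containing no point of $P$), where spurious cycles and extra fixed points of $g$ can live.

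The second gap is the fallback argument, and it is fatal as stated. Take $P$ primitive of period $2$, which is legitimate here since $\mathcal{U}$ only requires $a$ to be the unique fixed point (regularity enters only in Section 3 of the paper). For the $P$-linear map, every pair $\{y,\,f_P(y)\}$ with $y$ in a basic interval is a cycle of the same pattern: the same-pattern cycles form a \emph{continuum}, and their union is not compact (it accumulates at $a$), contrary to your compactness claim. Consequently, ``destroying any cycle $Q\neq P$ by a modification localized near $Q$'' cannot terminate, and ``minimising the number of such cycles over admissible $g$'' presupposes exactly what is to be proved, namely that some admissible $g$ has only finitely many (indeed one). A correct argument must kill all spurious cycles \emph{simultaneously} by one global choice: for this period-$2$ pattern, for instance, send the point at distance $t$ from $a$ on each of the two occupied branches to the point at distance $t^{2}$ on the other, so that $g^{2}$ lies strictly below the identity off $P$; one must then verify, for a general pattern, that such a choice creates no new same-pattern cycles and no new fixed points. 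That simultaneous, global verification is precisely the ``real work'' your final sentence defers, so what you have is an accurate description of the difficulty rather than a proof of the theorem.
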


	\subsection{Loops}\label{loops}

In Section \ref{intro}, we introduced \emph{oriented graph} $G_P$  and \emph{point loops} corresponding to a finite set $P \subset \tau - \{ a\}$. The following result suggests that to find out the patterns forced by a given pattern $A$, it is sufficient to look at the \emph{point loops} in the \emph{oriented graph} $G_P$ where $P$ exhibits $A$. 

\begin{theorem}[\cite{BMR}] \label{loops:orbits:connection:1} The following properties holds:
	
	\begin{enumerate}
		\item For any point loop $x_0 \to x_1 \to \dots x_{m-1} \to x_0$ in $\tau$, there is a point $y \in \tau - \{a\}$ such that $f^m(y) = y$ and for every $ k=0,1,2, \dots ,m-1$, the points $x_k$ and $f^k(y)$ lie on the same branch of $\tau$.

		\item Let $f$ be a $P$-linear map for some cycle $P \neq \{a\}$. Suppose that $y \neq a$ is a periodic point of $f$ of period $q$. Then, there exists a point loop $x_0 \to x_1 \to \dots x_{q-1} \to x_0$ in $\tau$ such that $ x_i \geqslant f^i(y)$ for all $i$. 
		
	\end{enumerate}
	
\end{theorem}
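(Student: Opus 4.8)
My plan is to prove both parts through a single device: the translation of the combinatorial arrow relation on $G_P$ into a \emph{covering relation} between the arcs $[a,x]$ (the unique arc of $\tau$ joining the branching point $a$ to a point $x$). First I would record the elementary dictionary: for $u,v\in\tau-\{a\}$ one has $u\to v$ if and only if $f([a,u])\supseteq[a,v]$. Indeed, if $u\to v$ there is $z$ with $u\geqslant z$, so $z\in[a,u]$, and $f(z)\geqslant v$; since $a\in[a,u]$ and $f(a)=a$, the set $f([a,u])$ is connected and contains both $a$ and $f(z)$, and a connected subset of a tree that contains two points contains the arc joining them, whence $f([a,u])\supseteq[a,f(z)]\supseteq[a,v]$. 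The reverse implication is immediate. This reduces the statement to the classical correspondence between loops of covering arcs and periodic points, transplanted to the tree $\tau$.

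For part (1), the loop $x_0\to\cdots\to x_{m-1}\to x_0$ gives, cyclically, $f([a,x_k])\supseteq[a,x_{k+1}]$ with $[a,x_m]:=[a,x_0]$. The central lemma is the arc-covering lemma: if $L$ is an arc, $g$ continuous, and an arc $M\subseteq g(L)$, then some subarc $L'\subseteq L$ satisfies $g(L')=M$. In the tree this is proved by composing with the nearest-point retraction $r\colon\tau\to M$ and trimming to a subarc on whose interior $r\circ g$ takes values in the interior of $M$; the trimming is exactly what rules out the possibility that $g$ overshoots past an endpoint of $M$ or slips onto another branch, so that in fact $g(L')=M$ and not merely $r\circ g(L')=M$. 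Iterating this lemma backwards along the loop produces nested subarcs $K_0\subseteq[a,x_0]$ with $f^k(K_0)\subseteq[a,x_k]$ for $0\le k\le m$ and $f^m(K_0)=[a,x_0]$.

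It remains to find a fixed point of $f^m$ in $K_0$ lying off $a$, and this non-degeneracy is the step I expect to be the main obstacle, since $a$ is automatically a fixed point of every iterate and a careless argument returns only $a$. Parametrising $[a,x_0]$ by distance from $a$ and writing $\pi$ for this coordinate, I would apply the intermediate value theorem to the displacement $h(z)=\pi(f^m(z))-\pi(z)$ on $K_0$: surjectivity $f^m(K_0)=[a,x_0]$ furnishes a point $u$ with $f^m(u)=x_0$ (hence $h(u)\ge 0$ and $u\neq a$) and a point $v$ with $f^m(v)=a$. The key is to locate the sign change of $h$ at positive distance from $a$; this follows because the covering forces $f^m$ to stretch $K_0$ all the way out to $x_0$, so either $f^m$ already fixes $x_0$, or $h$ is strictly positive at an interior point and changes sign against a preimage of $a$ chosen away from $a$ (which one arranges by keeping $K_0$ off the branching point in the pullback). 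Since interior points of $[a,x_0]$ are ordinary (non-branching) points of $\tau$, the retraction is injective there and the equation $\pi(f^m(y))=\pi(y)$ upgrades to $f^m(y)=y$; the resulting $y$ is the required periodic point, and $f^k(y)\in[a,x_k]$ places $f^k(y)$ on the branch of $x_k$.

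For part (2) the hypothesis that $f$ is $P$-linear makes the reverse construction direct. Being affine on the $P$-basic intervals and constant on the components of $\tau-[P]$, the map $f$ satisfies $f(\tau)\subseteq[P]$; hence every $f^i(y)$ lies in $[P]$, and $f^i(y)\neq a$ because $y\neq a$ has period $q$ while $a$ is fixed. I would set $x_i$ to be the point of $P$ on the branch of $f^i(y)$ that is smallest with $x_i\geqslant f^i(y)$, which exists precisely because $f^i(y)\in[P]$. To check the arrow $x_i\to x_{i+1}$, equivalently $f([a,x_i])\supseteq[a,x_{i+1}]$, take the $P$-basic interval $[w,x_i]$ containing $f^i(y)$ (with $w\in P\cup\{a\}$ the adjacent inner point); there $f$ is affine with $f(w),f(x_i)\in P\cup\{a\}$, so $f([w,x_i])$ is the arc joining $f(w)$ to $f(x_i)$, and this arc passes through $f^{i+1}(y)$. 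As an arc in a tree cannot backtrack, its farthest point from $a$ on the branch of $f^{i+1}(y)$ is one of the endpoints $f(w),f(x_i)$; that endpoint lies in $P$ (it is $\geqslant f^{i+1}(y)\neq a$) and is therefore $\geqslant x_{i+1}$ by minimality. Thus $f([a,x_i])$ reaches past $x_{i+1}$ and covers $[a,x_{i+1}]$. Finally $f^q(y)=y$ forces $x_q=x_0$, so $x_0\to x_1\to\cdots\to x_{q-1}\to x_0$ is the desired point loop with $x_i\geqslant f^i(y)$.
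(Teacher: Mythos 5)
The paper itself does not prove this theorem (it is quoted from \cite{BMR}), so your proposal has to be judged on its own merits. Your part (2) is correct: taking $x_i$ to be the minimal point of $P$ with $x_i \geqslant f^i(y)$, and using that $f$ maps the closed basic interval containing $f^i(y)$ affinely onto the arc joining the images of its endpoints, does produce the arrow $x_i \to x_{i+1}$; this is the standard argument. Part (1), however, has a genuine gap, and it sits exactly at the step you flagged as the main obstacle. Your pullback produces $K_0 \subseteq [a,x_0]$ with $f^m(K_0)=[a,x_0]$, so any periodic point you find this way satisfies $f^k(y) \leqslant x_k$. But the theorem only asserts that $f^k(y)$ lies on the \emph{same branch} as $x_k$ --- it may lie beyond $x_k$ --- and in general \emph{no} tracking periodic point exists inside the convex hulls $[a,x_k]$. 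Concretely: parametrize each branch by $[0,1]$, let $f$ send the point of $b_0$ with coordinate $s$ to the point of $b_1$ with coordinate $\min(2s,1)$, send $b_1$ to $b_0$ by the same formula, and collapse $b_2$ to $a$. Then $a$ is the unique fixed point, and with $x_0 \in b_0$, $x_1 \in b_1$ both at coordinate $\tfrac12$ we get the point loop $x_0 \to x_1 \to x_0$. Here $f^2$ restricted to $[a,x_0]$ has coordinate expression $\min(4s,1)$, which is strictly greater than $s$ on $(0,\tfrac12]$: the only fixed point of $f^2$ in $[a,x_0]$ is $a$. Both of your escape routes fail in this example: $f^2$ does not fix $x_0$, and $K_0$ cannot be kept off $a$, because the only $f$-preimage of $a$ in $[a,x_1]$ is $a$ itself, so every subarc of $[a,x_1]$ mapping onto $[a,x_0]$ must contain $a$. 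The periodic point promised by the theorem is the endpoint of $b_0$ (coordinate $1$), strictly beyond $x_0$.

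The repair requires leaving the convex hull and using the whole closed branches. Let $r_k$ be the retraction of $\tau$ onto the closed branch $\overline{b_{(k)}}$ containing $x_k$ (points off that branch go to $a$), set $g_k = r_{k+1}\circ f$ on $\overline{b_{(k)}}$, and let $G = g_{m-1}\circ\dots\circ g_0 : \overline{b_{(0)}} \to \overline{b_{(0)}}$. Two observations finish the proof. First, if $G(y)=y \neq a$, then no retraction ever acted nontrivially along the orbit (once a point is retracted to $a$ it stays at $a$, contradicting $G(y)\neq a$), so $f^m(y)=y$ and each $f^k(y)$ lies on the branch of $x_k$, as required. Second, $G$ has a fixed point $\neq a$: your covering argument, pushed through the retractions, gives $G([a,x_0]) \supseteq [a,x_0]$, hence a point $w \in [a,x_0]$ with $G(w)=x_0$; necessarily $w \neq a$ (since $G(a)=a$) and $G(w)=x_0 \geqslant w$, while at the far endpoint $e_0$ of the branch one has $G(e_0) \leqslant e_0$ automatically. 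The intermediate value theorem applied on $[w,e_0]$ --- not on $K_0$ --- yields the desired fixed point $y \geqslant w > a$. This is the essential idea your write-up is missing: the ``upper'' endpoint for the sign change must be the tip of the branch, not a point of the pullback arc.
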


We now define \emph{loops} of \emph{intervals}. For this we  borrow the standard definitions from \cite{alm00} and \cite{alm98}.

For $x,y \in \tau$ lying in the same \emph{branch}, we call the \emph{convex hull} $[x,y]$ of $x$ and $y$, an \emph{interval} on $\tau$ connecting $x$ and $y$. An \emph{interval} $I$ on $\tau$  is said to $f$-\emph{cover} an \emph{interval} $J$ on $\tau$ if $f(I) \supset J$. Then, we can speak of a \emph{chain} of \emph{intervals}  $I_0 \RA I_1 \RA \dots $ on $\tau$ if every previous \emph{interval} on $\tau$  in the chain $f$-\emph{covers} the next one. We also speak of \emph{loops of intervals} on $\tau$. Call an \emph{interval} on $\tau$ \emph{admissible}  if one of its end-points is $a$. We
call a  \emph{chain (a loop)} of \emph{admissible intervals} $I_0,I_1,\dots $ on $\tau$  an
\emph{admissible loop (chain)} on $\tau$  respectively. Result similar to Theorem \ref{loops:orbits:connection:1} can also be obtained for \emph{loops of interval} on $\tau$. The \emph{loop of intervals} $ \Gamma : [x,a]\to [f(x), a] \to [f^2(x), a] \to \dots [f^{n-1}(x), a] \to [x,a], x \in P$ is called the \emph{fundamental admissible loop of intervals} associated with $P$.

\begin{theorem}[\cite{alm98, zie95}]\label{theorem:interval:graph}
	
	For a loop of interval	$ I_0 \to I_1 \to $ $    \dots $ $  I_{q-1}  \to I_0 $ of length $q$ on $T$,  there exists a point $x_0 \in I_0$ satisfying $f^i(x_0) \in I_{i} $ for $ i \in \{0, 1, 2, $ $ \dots q-1\}$ and $f^q(x_0) = x_0$. 
\end{theorem}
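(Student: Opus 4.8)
The plan is to run the classical \emph{itinerary (pullback) argument} from one-dimensional combinatorial dynamics, checking that the tree structure of $\tau$ causes no essential trouble. The argument rests on two elementary ingredients. First, a \textbf{pullback lemma}: if $I$ and $J$ are intervals on $\tau$ with $f(I) \supseteq J$, then there is a subinterval $K \subseteq I$ with $f(K) = J$. Second, a \textbf{fixed point lemma}: if $L$ is an interval on $\tau$ lying in a single branch and $g \colon L \to \tau$ is continuous with $g(L) \supseteq L$, then $g$ has a fixed point in $L$.

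To prove the pullback lemma, I would parametrise $I$ by a homeomorphism $h \colon [0,1] \to I$ and write $F = f \circ h$. Since $J$ is an arc lying in a single branch $b_j$, with endpoints $c$ (nearer to $a$) and $d$, I pick parameters $s,t$ with $F(s) = c$ and $F(t) = d$; as $\tau$ is a tree, the connected set $F([s,t])$ contains the unique arc joining any two of its points and hence contains all of $J$. Composing $F$ with the continuous retraction $\tau \to \overline{b_j}$ that collapses every point off $b_j$ to $a$, and using the distance-from-$a$ coordinate on $b_j$, the problem reduces to the classical real-variable statement that a continuous image of an interval containing $[c,d]$ admits a subinterval mapping exactly onto $[c,d]$; a standard supremum/infimum argument then produces the desired $K$. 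The fixed point lemma is handled the same way: since $L$ and its image $g(L) \supseteq L$ all lie in one branch, projecting to the distance-from-$a$ coordinate turns $g$ into a continuous real map whose image contains the coordinate range of $L$, so the intermediate value theorem yields a point whose coordinate is fixed, and equal coordinates on a common branch force equality of the points themselves.

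With these in hand, I would construct subintervals $K_i \subseteq I_i$ by backward induction along the loop, writing $I_q := I_0$. Using $f(I_{q-1}) \supseteq I_0$ and the pullback lemma, choose $K_{q-1} \subseteq I_{q-1}$ with $f(K_{q-1}) = I_0$. Then for $i = q-2, q-3, \dots, 0$, the covering $f(I_i) \supseteq I_{i+1} \supseteq K_{i+1}$ lets me pull back to obtain $K_i \subseteq I_i$ with $f(K_i) = K_{i+1}$. By construction $f^i(K_0) = K_i \subseteq I_i$ for every $i$, and $f^q(K_0) = f(K_{q-1}) = I_0 \supseteq K_0$. Applying the fixed point lemma to $g = f^q$ on $K_0$ (both $K_0$ and its image $I_0$ lie in the branch of $I_0$) produces $x_0 \in K_0$ with $f^q(x_0) = x_0$, and then $f^i(x_0) \in K_i \subseteq I_i$ for $i = 0, 1, \dots, q-1$, as required.

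The main obstacle is the pullback lemma: on the interval one simply inverts a monotone branch, but on $\tau$ the image $f(I)$ is a continuous image of an arc and may be a genuinely branched subtree, so one cannot speak of ``the'' preimage. The resolution is precisely the tree property of $\tau$ (connected subsets are arc-connected and contain the arc joining any two of their points) together with the fact that each $I_i$ is contained in a single branch; these reduce every covering to a one-dimensional statement to which the classical arguments of \cite{alm98, zie95} apply verbatim. The admissible case, in which an endpoint of some $I_i$ equals $a$, needs only the trivial observation that the branch projection sends $a$ to the zero coordinate, so no separate treatment is required.
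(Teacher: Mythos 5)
First, a point of reference: the paper does not prove Theorem \ref{theorem:interval:graph} at all --- it is quoted with the citation \cite{alm98, zie95}. So your proposal can only be compared with the classical argument in those sources, and indeed what you propose (exact pullback of intervals along the loop, backward induction to get a nested chain $K_0$ with $f^i(K_0)=K_i\subseteq I_i$ and $f^q(K_0)=I_0\supseteq K_0$, then an intermediate-value fixed-point step) is precisely that standard argument. The skeleton is correct, and your fixed-point lemma, \emph{as you actually use it} (image of $f^q|_{K_0}$ equal to $I_0$, hence contained in the closure of a single branch), is sound: project to the $\psi$-coordinate, apply IVT, and equal coordinates in a common branch closure give equal points.

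There is, however, one step whose justification as written does not work, and it is exactly the case you wave away: the admissible case $c=a$ of the pullback lemma. Your reduction composes $F$ with the retraction $r$ onto $\overline{b_j}$, and the supremum/infimum argument then controls only $\psi\circ r\circ F$. When $\psi(c)>0$ this is enough, because any point whose image left $b_j$ would have retraction coordinate $0<\psi(c)$, which is excluded; so $f(K)=J$ exactly. But when $c=a$, the coordinate $0$ is \emph{inside} the target range $[0,\psi(d)]$, and a point of $K$ whose image lies in a completely different branch also has retraction coordinate $0$. So the argument a priori gives only $f(K)\subseteq J\cup(\tau\setminus b_j)$, i.e.\ the pulled-back interval may ``overflow'' through $a$ into other branches; your stated reason (``the branch projection sends $a$ to the zero coordinate'') is precisely the fact that creates this ambiguity, not the fact that resolves it. The gap matters because exactness of the pullback is load-bearing: without $f(K_i)=K_{i+1}$ you lose both the conclusion $f^i(x_0)\in I_i$ and the hypothesis of your fixed-point lemma (whose proof assumes the image stays in one branch). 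And since the loops used in this paper are admissible loops --- every $I_i$ has $a$ as an endpoint --- this is the generic case, not a degenerate one. The fix is short but must be said: choose $t'=\inf\{u: \phi(u)=\psi(d)\}$ and $s'=\sup\{u\le t': \phi(u)=0\}$; then every interior parameter of $[s',t']$ has coordinate strictly between $0$ and $\psi(d)$, hence maps into $b_j$ itself, and since $F([s',t'])$ is connected with $F((s',t'))\subseteq b_j$, the endpoint image $F(s')$ must lie in $\overline{b_j}$, forcing $F(s')=a$ and $f(K)=J$ exactly. With that observation inserted, your proof is complete and agrees with the argument in the cited references.
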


\subsection{Properties of regular patterns}\label{colors} We now study properties of \emph{regular patterns} on \emph{triods}. A \emph{loop} composed entirely of \emph{black arrows} will be called a \emph{black loop}.

\begin{theorem}[\cite{BMR}]\label{black:loop:length:3}\label{no:insider}\label{all:branches}\label{canonical:numbering}\label{canonical:ordering} Let $\pi$ be a regular pattern on triods. Let $P$ be a cycle of a $P$-linear map  $f \in \mathcal{R}$ where  $P$ exhibits $\pi$. Then the following statements hold:
	\begin{enumerate}
		\item For every point $x \in P$, there exists a black loop of length $3$ passing through $x$.
		\item If $x$ is a green point, then $x > f(x)$.
		\item The cycle $P$ contains at least one point on each branch of $\tau$.
		\item The cycle $P$ always forces a primitive cycle of period~$3$.
	\end{enumerate}
	
	Moreover, there exists an ordering $\{b_i \mid i = 0, 1, 2\}$ (where indices are taken modulo~$3$) of the branches of the triod~$\tau$ such that the points $p_i \in P$, $i = 0, 1, 2$, which are closest to the branching point $a$ on each branch $b_i$, are all black.  
	This ordering is called the canonical ordering of the branches of~$\tau$.
\end{theorem}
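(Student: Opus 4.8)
The plan is to translate every claim into the combinatorial language of point loops and admissible loops of intervals (Theorems~\ref{loops:orbits:connection:1} and~\ref{theorem:interval:graph}) and then exploit the two configurations that are forbidden for $f\in\mathcal{R}$: a fixed point other than $a$ (impossible since $f\in\mathcal{U}$) and a \emph{primitive} cycle of period~$2$ (impossible since $\pi$ is regular, because by Theorem~\ref{forcing} every cycle of the $P$-linear map $f$ realizes a pattern forced by $\pi$). Throughout, a black arrow advances the branch index by $+1$ and a red one by $-1$. \textbf{Part (2)} is then immediate: if $x$ is green but $f(x)\not<x$, then (as $a$ is the only fixed point) $f(x)>x$ on the branch $b_i\ni x$, so taking $z=x$ in the definition of an arrow ($x\geqslant x$ and $f(x)\geqslant x$) gives a green self-arrow $x\to x$, i.e.\ a point loop of length~$1$; by Theorem~\ref{loops:orbits:connection:1}(1) this yields $y\in\tau\setminus\{a\}$ with $f(y)=y$, a contradiction. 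Hence $x>f(x)$.

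\textbf{Part (3).} Suppose some branch carries no point of $P$. If $P$ lies on a single branch, every arrow of the fundamental loop is green, so by Part~(2) $f$ moves every point of $P$ strictly toward $a$; then the outermost point of $P$ has no $f$-preimage in $P$, which is absurd. If $P$ meets exactly two branches, their union with $a$ is an interval $I$, and since $f$ is $P$-linear we have $f(\tau)\subseteq[P]\subseteq I$, so $f$ acts as an interval map of $I$ whose unique fixed point $a$ is interior. Writing $\phi(t)=(\text{signed parameter of }f(t))-t$ on $I$, continuity and the absence of other fixed points force $\phi$ to keep a constant sign on each side of $a$; evaluating at the two extreme points of $P$ shows $\phi<0$ on one branch and $\phi>0$ on the other. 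In particular the $P$-preimage of each extreme point lies on the opposite branch, so the two outermost admissible intervals $f$-cover one another, giving an admissible loop of length~$2$ straddling the two branches. By Theorem~\ref{theorem:interval:graph} this produces a primitive period-$2$ cycle, contradicting regularity. Thus $P$ meets every branch.

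\textbf{Canonical ordering and Part (4).} By Part~(3) each branch $b_i$ carries an innermost point $p_i$. If $p_i$ were green, Part~(2) would place $f(p_i)$ strictly between $a$ and $p_i$ on $b_i$, where no point of $P$ lies; so $p_i$ is black or red, i.e.\ $f(p_i)\in b_{i+1}$ or $b_{i-1}$. Since $f([a,p_i])$ is connected, contains $a$, and $f(p_i)\geqslant p_{\sigma(i)}$, the innermost admissible interval $[a,p_i]$ $f$-covers $[a,p_{\sigma(i)}]$, where $\sigma(i)=i+1$ (black) or $i-1$ (red). If $\sigma(i)=j$ and $\sigma(j)=i$ for some $i\neq j$, the corresponding length-$2$ admissible loop yields a primitive period-$2$ cycle, again contradicting regularity; on three cyclically indexed branches this forbids any non-constant choice of signs, so either $\sigma(i)=i+1$ for all $i$ (all $p_i$ black) or $\sigma(i)=i-1$ for all $i$ (all $p_i$ red). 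Choosing the cyclic labeling of the branches with the correct orientation makes all $p_i$ black; this is the canonical ordering. The resulting black admissible loop $[a,p_0]\to[a,p_1]\to[a,p_2]\to[a,p_0]$ then gives, via Theorem~\ref{theorem:interval:graph}, a point with one image on each branch, i.e.\ a primitive cycle of period~$3$, proving Part~(4).

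\textbf{Part (1), the main obstacle.} I would first record the two elementary facts just used: for \emph{any} $x\in b_i$ the witness $z=p_i\leqslant x$ gives a black arrow $x\to p_{i+1}$, and $p_{i+1}\to p_{i+2}$ is black. A black loop of length~$3$ through $x$ must read $b_i\to b_{i+1}\to b_{i+2}\to b_i$, so the real content is the existence of a black arrow \emph{into} $x$ from $b_{i+2}$ reaching $x$, together with a compatible middle arrow; I expect this to be the hard part. The natural device is the farthest-reach function $M_j(t)=\operatorname{param}_{b_{j+1}}$ of the outermost point of $f([a,t])$ on $b_{j+1}$, so that $[a,t]$ black-covers exactly $[a,M_j(t)]$; a black $3$-loop through the point of $b_i$ at parameter $t$ then exists precisely when $F(t):=M_{i+2}(M_{i+1}(M_i(t)))\geq t$. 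The obstruction is ``red backtracking'': an outer point of $b_i$ may have its only $P$-preimage on $b_{i+1}$ (a red arrow), so $f$ of the $b_{i+2}$-side need not reach it. To control this I would argue downward from the outermost point, using Part~(2) (a green preimage lies strictly farther out, so the outermost point's preimage is black or red) together with a regularity-driven extremal argument that rules out a strictly inward return $F(t)<t$, since any such defect would, through the admissible-loop correspondence, manufacture either a primitive period-$2$ loop or a non-central fixed point. Making this precise, i.e.\ proving $F(t)\geq t$ uniformly, is the crux and the step demanding the most care.
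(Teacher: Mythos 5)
The paper never proves this statement: it is imported verbatim from \cite{BMR} as background in the Preliminaries, so there is no internal proof to compare against and your proposal must stand on its own. It does not, because Part (1) --- which you yourself flag as unfinished --- is left unproven. Everything after ``the real content is the existence of a black arrow \emph{into} $x$'' is a plan, not an argument: you introduce the reach function $F(t)=M_{i+2}(M_{i+1}(M_i(t)))$ and correctly reduce the claim to $F(t)\geq t$ at the points of $P$, but the proposed mechanism --- that a defect $F(t)<t$ would ``manufacture either a primitive period-2 loop or a non-central fixed point'' --- is never substantiated, and it is far from clear that it can be made to work as stated: a failure of $[a,p_{i+2}]$ to $f^3$-reach $x$ is a local covering deficiency, and no construction is given turning it into a forbidden loop. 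Since Part (1) is the substantive core of the theorem (Parts (2)--(4) and the canonical ordering are comparatively soft), the proposal is essentially incomplete.

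There is also a repairable flaw running through Parts (3), (4) and the canonical-ordering step: you invoke Theorem \ref{theorem:interval:graph} on \emph{admissible} loops to produce a ``primitive period-2 cycle'' and a ``primitive cycle of period~3.'' That theorem only yields a point $x_0$ with $f^q(x_0)=x_0$ following the loop; since every admissible interval contains $a$, the point it produces may be $a$ itself, in which case no primitive cycle and no contradiction results. The correct tool --- which you do use correctly in Part (2) --- is Theorem \ref{loops:orbits:connection:1}(1) applied to the corresponding \emph{point} loops ($x^+\to x^-\to x^+$, $p_i\to p_j\to p_i$, $p_0\to p_1\to p_2\to p_0$, whose arrows follow from the covering relations you establish, e.g.\ $z\leqslant x^+$ with $f(z)=x^-$ witnesses $x^+\to x^-$); that statement explicitly delivers $y\in\tau\setminus\{a\}$ whose orbit tracks the prescribed branches, hence genuinely primitive cycles. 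Alternatively, in the two-branch case of Part (3) one can restrict $f$ to the invariant interval formed by the two branches and quote Sharkovsky: a cycle of period at least $2$ forces a period-2 orbit, which must straddle the unique fixed point $a$ and is therefore primitive. With these substitutions Parts (2), (3), (4) and the canonical ordering become sound; Part (1) remains open.
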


From this point onward, we assume that the \emph{branches} of $\tau$ are arranged according to their \emph{canonical ordering}.

\subsection{Characterization of Triod-twist patterns}\label{section:triod:twist:color}

We conclude this section by providing a necessary condition for a given \emph{pattern}~$\pi$ to qualify as a \emph{triod-twist pattern}.

\begin{definition}[\cite{BB5}]\label{green}
	A \emph{regular cycle} $P$ is said to be \emph{order-preserving} if, for any two points $x, y \in P$ with $x > y$ such that $f(x)$ and $f(y)$ belong to the same \emph{branch} of~$\tau$, we have $f(x) > f(y)$.  
	A \emph{pattern}~$A$ is called \emph{order-preserving} if every cycle that \emph{exhibits} $A$ is \emph{order-preserving}.
\end{definition}

The next theorem provides a necessary condition for a \emph{pattern} to be a \emph{triod-twist pattern}.

\begin{theorem}[\cite{BB5}]\label{necesary:condition:triod:twist}
	Every triod-twist pattern is order-preserving.
\end{theorem}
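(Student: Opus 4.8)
The plan is to argue by contradiction, converting a single order reversal into an explicit re-wiring of the fundamental point loop that produces another pattern of the same rotation number. Suppose $\pi$ is a triod-twist pattern that is \emph{not} order-preserving, and fix a representative cycle $P$ of period $n$ together with its $P$-linear map $f=f_P\in\mathcal R$. Non-order-preservation supplies two points $x>y$ of $P$ lying on a common branch $b_i$ whose images $f(x),f(y)$ lie on a common branch $b_j$ but satisfy $f(y)>f(x)$ (equality is impossible since $f$ is injective on $P$). Write the fundamental point loop of $P$ as $p_0\to p_1\to\dots\to p_{n-1}\to p_0$ with $x=p_a$ and $y=p_b$, so that $f(x)=p_{a+1}$ and $f(y)=p_{b+1}$ (indices mod $n$), and assume $a<b$.

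First I would record the two elementary monotonicity properties of the arrow relation in $G_P$: if $u\to v$ then $u'\to v$ for any $u'\geqslant u$ on the branch of $u$ (one may enlarge the source), and $u\to w$ for any $w$ with $v\geqslant w$ on the branch of $v$ (one may shrink the target). Applying the first to $y\to f(y)$ with $x\geqslant y$ yields the crossing arrow $x\to f(y)$, and applying the second to $y\to f(y)$ with $f(y)\geqslant f(x)$ yields the crossing arrow $y\to f(x)$. The decisive observation is that all four arrows $x\to f(x)$, $y\to f(y)$, $x\to f(y)$, $y\to f(x)$ run from $b_i$ to $b_j$ and hence carry the same displacement $(j-i\bmod 3)/3$; interchanging the targets at $x$ and $y$ therefore alters no displacement value.

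Using the two crossing arrows I would re-wire the fundamental loop into two shorter loops
$$\Gamma_1: p_a\to p_{b+1}\to p_{b+2}\to\dots\to p_a,\qquad \Gamma_2: p_b\to p_{a+1}\to p_{a+2}\to\dots\to p_b,$$
of lengths $L_1=n-(b-a)$ and $L_2=b-a$, each at least $1$ and strictly less than $n$; their non-crossing arrows are exactly the surviving fundamental arrows, so both are genuine loops in $G_P$. By the displacement bookkeeping above, $d(\Gamma_1)+d(\Gamma_2)=d(\Gamma)$ while $L_1+L_2=n$, whence $\rho(\Gamma)=\rho$ is the weighted mediant of $\rho(\Gamma_1)$ and $\rho(\Gamma_2)$. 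This forces a dichotomy. If $\rho(\Gamma_1)=\rho(\Gamma_2)$, both equal $\rho$, and $\Gamma_1$ is a loop of length $L_1<n$ with rotation number $\rho$; by Theorem~\ref{loops:orbits:connection:1} it is realized by a cycle $Q$ of $f$ of period at most $L_1<n$ and rotation number $\rho$, so by Theorem~\ref{forcing} the pattern of $Q$ is forced by $\pi$, differs from $\pi$ (smaller period), yet shares the rotation number $\rho$ — contradicting that $\pi$ is a triod-twist. If instead $\rho(\Gamma_1)\neq\rho(\Gamma_2)$, the mediant $\rho$ lies strictly between them, so the forced rotation interval $L(G_P)$ is nondegenerate with $\rho$ in its interior; by Theorem~\ref{result:1} the set $mrp(\pi)$ then contains $(\rho,m)$ for every admissible $m$, so $\pi$ forces infinitely many patterns of rotation number $\rho$, again contradicting the triod-twist hypothesis.

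The conceptual crux — and the step I expect to require the most care — is the construction and displacement analysis of the crossing arrows: one must verify that a single order reversal genuinely produces both re-wired arrows in $G_P$ and that the re-wiring preserves total displacement, so that the two sub-loops inherit rotation data compatible with $\rho$. Once this is in place, the mediant dichotomy together with the cited forcing and rotation-interval theorems closes the argument in both branches. The remaining points are routine: checking that each $\Gamma_i$ is a legitimate loop in $G_P$, and confirming that the realized cycle has rotation number $\rho$ regardless of whether its period equals the loop length (since rotation number is invariant under repetition of the fundamental loop).
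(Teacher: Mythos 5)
Your argument is correct, but it cannot be compared against an internal proof: the paper states Theorem~\ref{necesary:condition:triod:twist} as a quoted result from \cite{BB5} and gives no proof of it, so the only meaningful check is against the paper's quoted machinery, which your proof uses correctly. The two monotonicity properties of arrows in $G_P$ hold exactly as you state them, the crossing arrows $x\to f(y)$ and $y\to f(x)$ do exist, and since all four arrows involved join the branch of $x,y$ to the branch of $f(x),f(y)$, the surgery preserves total displacement; hence $\rho(\Gamma)$ is a weighted average of $\rho(\Gamma_1)$ and $\rho(\Gamma_2)$ with $L_1+L_2=n$, and both horns of your dichotomy close. In the equal case, Theorem~\ref{loops:orbits:connection:1}(1) produces a cycle of period at most $L_1<n$ whose rotation number equals that of the loop, and Theorem~\ref{forcing} makes its (necessarily different, since shorter) pattern forced by $\pi$. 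In the unequal case there is one presentational gap: you pass directly from ``$\rho$ lies in the interior of $L(G_P)$'' to ``$(\rho,m)\in mrp(\pi)$ for all $m$,'' which tacitly identifies the interior of $L(G_P)$ with the open interval $(t_1,t_2)$ of Theorem~\ref{result:1}. To make this airtight you should first realize \emph{both} subloops by cycles via Theorem~\ref{loops:orbits:connection:1}(1); their patterns are forced by $\pi$, so their rotation numbers lie in $[t_1,t_2]$, giving $t_1<\rho<t_2$, after which Theorem~\ref{result:1} and the definition of the convex hull in $\mathbb{M}$ yield infinitely many forced patterns of rotation number $\rho$. (A side remark: degenerate subloops of length $1$ never arise, since an arrow $u\to v$ with $v\geqslant u$ would force a fixed point different from $a$, impossible for maps in $\mathcal{U}$.) For comparison, the source \cite{BB5} develops a code-function characterization of triod-twist patterns (strictly increasing codes) and obtains order-preservation through that calculus; your loop-surgery argument is instead the classical Blokh--Misiurewicz rewiring style, and its advantage is that it runs entirely on the three theorems already quoted in this paper, at the cost of the extra realization step flagged above.
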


\section{Characterization of regular exact patterns}\label{characterization:section}

From this point onward, all references to a \emph{pattern} or a \emph{cycle} will, unless otherwise stated, refer to a \emph{regular pattern} or a \emph{regular cycle}, respectively.  
We begin by introducing a suitable \emph{metric} on the \emph{triod}~$\tau$.

Let $b_0, b_1,$ and $b_2$ denote the three \emph{branches} of~$\tau$, with $a$ representing the \emph{central branching point}.  
For each $i = 0, 1, 2$, there exists a homeomorphism $\gamma_i : [0,1] \to b_i$ satisfying $\gamma_i(0) = a$ and $\gamma_i(1)$ being the \emph{endpoint} of~$b_i$.  
Define a function $\psi : \tau \to [0,1]$ by setting $\psi(y) = t$ whenever $\gamma_i(t) = y$ for some $i \in \{0,1,2\}$.  
In other words, $\psi(y)$ measures the normalized \emph{distance} of the point~$y$ from the \emph{central point}~$a$ along its \emph{branch}.

We now define a metric~$d_{\tau}$ on~$\tau$ by
\[
d_{\tau}(x, y) =
\begin{cases}
	|\psi(x) - \psi(y)|, & \text{if $x$ and $y$ belong to the same $branch$,} \\[6pt]
	\psi(x) + \psi(y), & \text{if $x$ and $y$ belong to different $branches$.}
\end{cases}
\]

For $f, g \in \mathcal{R}$, we define $D(f, g) =  \displaystyle \sup_{x \in \tau} d_{\tau}\big(f(x), g(x)\big)$. A \emph{neighborhood} of a map $f \in \mathcal{R}$ will henceforth refer to one taken with respect to the metric~$D$ on~$\mathcal{R}$. Furthermore, we shall call any open connected subset of the \emph{triod}~$\tau$ an \emph{open interval} on~$\tau$, and its \emph{diameter} (with respect to~$d_{\tau}$) its \emph{length}. Let $P$ be a cycle of a $P$-\emph{linear} map $f \in \mathcal{R}$.  We will call the components of the set $[P] \setminus (P \cup \{a\})$, $P$-\emph{basic intervals}.

	\begin{lemma}\label{locally:expanding}
		Let $P$ be a  cycle of a $P$-linear map $f \in \mathcal{R}$ of period $n$. Let $J$ be a $P$-basic interval. Suppose there exists $m \in \mathbb{N}$, $1 < m < n$ and an open interval  $K \subset J$  such that $f^i(K) \cap P = \varnothing$ for $i=0,1,2, \dots m$ and $f^m(K) \subset J$. Then, the length of $f^m(K)$ is strictly greater than the length of $K$. 
	\end{lemma}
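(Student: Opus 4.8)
The plan is to trap the forward orbit of $K$ inside a loop of $P$-basic intervals, write $\mathrm{length}(f^m(K))$ as a product of the affine slopes of $f$ along this loop, and show that this product exceeds $1$.

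First I would show that for each $i\in\{0,1,\dots,m\}$ the set $f^i(K)$ lies in a single $P$-basic interval $J_i$, with $J_0=J=J_m$. Each $f^i(K)$ is connected and, by hypothesis, disjoint from $P$. It is also disjoint from $a$: since $f(a)=a$, if $a\in f^i(K)$ then $a\in f^j(K)$ for all $j\ge i$, in particular $a\in f^m(K)\subset J$, contradicting $a\notin J$ (an open basic interval omits its endpoints). Hence $f^i(K)$ lies in one component of $\tau\setminus(P\cup\{a\})$, i.e. in a single basic interval or in an exterior ray. The exterior-ray case is impossible for $i<m$: on $\tau\setminus[P]$ the map $f$ is constant, so $f^{i+1}(K)$ would be a single point, and by continuity that point is the $f$-image of an outermost point of $P$, hence lies in $P$, contradicting $f^{i+1}(K)\cap P=\varnothing$; and $f^m(K)\subset J$ rules out $i=m$. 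This produces the loop $J_0\to J_1\to\cdots\to J_{m-1}\to J_0$, and since $f$ is affine on each $J_i$ and carries the endpoints of $J_i$ (which lie in $P\cup\{a\}$) to points of $P\cup\{a\}$, we obtain $f(J_i)\supseteq J_{i+1}$ for every $i$.

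Next, writing $|s_i|$ for the absolute slope of $f|_{J_i}$ with respect to $d_\tau$, the inclusions $f^i(K)\subset J_i$ give $\mathrm{length}(f^m(K))=\big(\prod_{i=0}^{m-1}|s_i|\big)\,\mathrm{length}(K)$, while $f(J_i)\supseteq J_{i+1}$ yields $|s_i|=\mathrm{length}(f(J_i))/\mathrm{length}(J_i)\ge \mathrm{length}(J_{i+1})/\mathrm{length}(J_i)$. The product telescopes, using $J_m=J_0$, to $\prod_{i=0}^{m-1}|s_i|\ge 1$, so $\mathrm{length}(f^m(K))\ge\mathrm{length}(K)$, with equality exactly when $f(J_i)=J_{i+1}$ for every $i$.

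It then remains to exclude this equality case, which is the delicate part. If $f(J_i)=J_{i+1}$ for all $i$, then $f^m$ restricts to an affine bijection of $\overline J$ onto itself, hence an isometry of slope $\pm1$. If the slope is $+1$, then $f^m$ fixes both endpoints of $J$; at least one endpoint lies in $P$ (at most one can be $a$), and it would then have period dividing $m<n$, contradicting that $P$ has period $n$ — this is where the hypothesis $m<n$ is used. The genuinely hard case is slope $-1$: here $f^m$ swaps the two endpoints of $J$, which forces both to lie in $P$ (since $f^m(a)=a$), and propagating the exact-covering relation around the loop shows that $f^m$ reverses each $J_i$ and pairs the points of $P$ into adjacent two-point blocks cyclically permuted by $f$ — that is, $P$ carries a block structure. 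I expect this orientation-reversing subcase to be the main obstacle, and I do not believe it can be removed by regularity alone; its exclusion should instead come from the absence of a block structure on $P$. Thus the strict inequality is precisely the point at which this lemma interfaces with the block-structure analysis of Section~\ref{characterization:section}, and I would complete the proof by showing that the equality case produces exactly such a block structure and invoking that structural hypothesis to rule it out.
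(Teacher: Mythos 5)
Your proof is correct in every step it actually carries out, and it takes a genuinely different (and more careful) route than the paper. The paper's proof enlarges $K$ to a \emph{maximal} open interval $L \subset J$ with $f^i(L) \cap P = \varnothing$ for $i \le m$, argues from maximality that $f^m$ sends the endpoints of $L$ into $P$, notes that $f^m|_L$ is monotone by $P$-linearity, concludes $f^m(L) = J$, and then says ``the result follows.'' That last step is an implicit expansion-factor argument: the affine map $f^m|_L$ stretches lengths by $\mathrm{length}(J)/\mathrm{length}(L) \ge 1$, which is \emph{strict} only when $L \subsetneq J$. Your telescoping product over the loop $J_0 \to J_1 \to \dots \to J_{m-1} \to J_0$ reaches the same inequality $\mathrm{length}(f^m(K)) \ge \mathrm{length}(K)$ more cleanly (and your preliminary step that each $f^i(K)$ avoids $a$ and lies in a single basic interval sidesteps a subtlety the paper never addresses for $L$, namely that some $f^i(L)$ could contain $a$, where a $P$-linear map need not be affine). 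Crucially, unlike the paper, you isolate exactly when equality occurs.

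Your diagnosis of the orientation-reversing equality case is not a gap in your argument; it exposes a gap in the paper's. The lemma as stated is false, and the paper's ``the result follows'' silently assumes $L \subsetneq J$. Concretely: take $P$ of period $6$ with points $u_i < v_i$ on branch $b_i$ ($i=0,1,2$) and $u_0 \to u_1 \to u_2 \to v_0 \to v_1 \to v_2 \to u_0$. The $P$-linear map sends each branch $b_i$ into $b_{i+1}$, so it has no cycle of period $2$ at all (nor do the linear maps of its cycles), hence it lies in $\mathcal{R}$; yet with $J=(u_0,v_0)$ and $m=3$ every open $K \subset J$ satisfies the hypotheses, while $f^3|_J$ is an orientation-reversing isometry, so $\mathrm{length}(f^3(K)) = \mathrm{length}(K)$. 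This $P$ is precisely a doubling — the block structure your slope $-1$ analysis produces. So your instinct is right: strictness cannot follow from regularity alone and must be purchased with the hypothesis that $P$ has \emph{no block structure}; that hypothesis is present in every application of this lemma in the paper (Lemma \ref{open:set:eventually:meets:P} and Theorem \ref{connection:mixing:no:block:structure}). With it added, your plan closes: in the equality case the intervals $J_i$ are pairwise distinct (a repetition would force a period dividing some $2(j-i) < 2m$), hence pairwise disjoint closures of basic intervals avoiding $a$, $n=2m$, and the endpoint pairs partition $P$ into blocks permuted by $f$ — contradicting the added hypothesis. In that corrected form your argument is complete, and it repairs rather than merely reproduces the paper's proof.
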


\begin{proof}
	Choose $x \in K$. Consider the maximal (in terms of set inclusion) \emph{open interval} $L \subset J$ containing $x$ such that $f^i(L) \cap P = \varnothing$ for $i=0, 1,2,\dots m$. Let $\ell$ and $r$ be the left and right \emph{endpoints} of the \emph{open interval} $L$. We claim that $f^m(\ell) \in P$ and $f^m(r) \in P$. Otherwise, we can find a slightly larger \emph{open interval} $L' \supset L$ which also satisfies $f^i(L') \cap P = \varnothing$ for $i=0, 1,2,\dots m$, contradicting the maximality of $L$.  Moreover, since $f$ is $P$-\emph{linear}, the restriction $f^m|_{L}$ is \emph{monotone} on $L$. Hence $f^m(L) = J$, and the result follows.

	\end{proof}

In \cite{BMR}, Blokh and Misiurewicz generalized to maps on \emph{triods} the notion of \emph{block structure}—a concept originally introduced for maps of the circle (see \cite{alm00, BM3}).

\begin{definition}
	Let $P$ be a periodic orbit of  a $P$-\emph{linear} map $f \in \mathcal{R}$ on a \emph{triod}~$\tau$.  
	We say that $P$ possesses a \emph{block structure} over another cycle $Q$ if $P$ can be partitioned into disjoint subsets $P = P_1 \cup P_2 \cup \dots \cup P_m$,
	called \emph{blocks}, all of equal cardinality, where $m$ is the period of $Q$.  
	These blocks satisfy the following conditions:
	\begin{enumerate}
		\item the \emph{convex hulls} $[P_i]$ are pairwise disjoint and none contains the \emph{branching point}~$a$ of~$\tau$;
		\item each \emph{block}~$P_i$ contains exactly one point~$x_i$ from~$Q$;
		\item whenever $f(x_i) = x_j$, we have $f(P_i) = P_j$.
	\end{enumerate}
\end{definition}

The same terminology is applied to \emph{patterns}.  
Specifically, we say that a \emph{pattern}~$A$ has a \emph{block structure} over a \emph{pattern}~$B$ if there exists a cycle~$P$ which \emph{exhibits}~$A$ and admits a \emph{block structure} over a cycle~$Q$ which \emph{exhibits}~$B$.  Two fundamental results concerning \emph{block structures}, due to Blokh and Misiurewicz \cite{BMR}, are stated below.

\begin{theorem}[\cite{BMR}]\label{add:3}
	Let $A$, $B$, and $C$ be patterns.  
	If $A$ has a block structure over~$B$ and $A$ forces~$C$, then either $C$ also has a block structure over~$B$, or $B$ forces~$C$.  
	Moreover, if $P$ exhibits $A$ for a $P$-linear map $f \in \mathcal{R}$, then for any pattern $C$ with a block structure over~$B$, there exists a representative  $Q$ of $C$ contained in $ \displaystyle \bigcup_i [P_i]$, where $P_i$ are the blocks of~$P$.
\end{theorem}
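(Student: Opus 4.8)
The plan is to reduce everything to the $P$-linear map $f$ and to analyse how a cycle of pattern $C$ sits relative to the blocks. The first step is a structural lemma: since $f$ is $P$-linear and $f(P_i)=P_{\sigma(i)}$, where $\sigma$ is the cyclic permutation of the blocks induced by $B$, I would show $f([P_i])=[P_{\sigma(i)}]$. The inclusion $\supseteq$ holds because $f([P_i])$ is connected and contains $f(P_i)=P_{\sigma(i)}$, hence contains its convex hull; the inclusion $\subseteq$ holds because $P$-linearity makes $f$ affine on each $P$-basic interval inside $[P_i]$, whose endpoints lie in $P_i$ and thus map into $P_{\sigma(i)}\subseteq[P_{\sigma(i)}]$, so by convexity the whole image stays in $[P_{\sigma(i)}]$. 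In particular $\bigcup_i[P_i]$ is $f$-invariant.

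Since $A$ forces $C$, Theorem~\ref{forcing} produces a cycle $R$ of pattern $C$ for $f$. The key observation is then a clean dichotomy: because $f\bigl(\bigcup_i[P_i]\bigr)=\bigcup_i[P_i]$, the set $R\cap\bigcup_i[P_i]$ is forward invariant under $f$, and since $R$ is a single orbit this intersection is either empty or all of $R$. Hence either $R\subseteq\bigcup_i[P_i]$, or $R$ is disjoint from every block. These two alternatives will supply the two disjuncts of the theorem.

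In the first case I set $R_i=R\cap[P_i]$. The relation $f([P_i])=[P_{\sigma(i)}]$ forces $f(R_i)=R_{\sigma(i)}$, so the $R_i$ are nonempty, of equal cardinality, have pairwise disjoint convex hulls avoiding $a$, and are permuted exactly as $\sigma$ permutes $Q$. Collapsing each $[P_i]$ to a point by a monotone map $\phi:\tau\to\tau'$ that is a homeomorphism off the blocks, the images of the blocks form a $g$-cycle of pattern $B$, where $g$ is the induced $\bar Q$-linear map with $g\circ\phi=\phi\circ f$; this is precisely the block structure of $C$ over $B$. In the second case $\phi$ is injective on $R$ (no two points of $R$ share a block) and is order- and branch-preserving there, so $\phi(R)$ is a $g$-cycle of the same period and the same pattern $C$. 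As $g$ is $\bar Q$-linear and $\bar Q$ exhibits $B$, Theorem~\ref{forcing} applied to $g$ shows that $B$ forces $C$. For the ``Moreover'' statement I would use that the arcs $[P_i]$ are disjoint and that their branch positions and cyclic order reproduce exactly the pattern $B$; any $B$-extension $C$ can then be realized by populating the interiors of the $[P_i]$ with the appropriate intra-block configuration, yielding a representative of $C$ inside $\bigcup_i[P_i]$.

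The main obstacle I anticipate is the careful construction and justification of the collapsing semiconjugacy $\phi$ on the triod. One must check that collapsing the disjoint arcs $[P_i]$ (none meeting the branch point $a$) again yields a triod, that $f$ descends to a $\bar Q$-linear map $g$ for which $\bar Q$ genuinely exhibits $B$, and---most delicately---that $\phi$ preserves patterns off the blocks so that $\phi(R)$ really carries pattern $C$ in the second case. Verifying pattern preservation under collapsing is subtler here than on the interval, since the branch structure and the displacement data defining the rotation pair must be respected; this is the step on which I would spend the most care.
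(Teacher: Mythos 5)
The paper never proves Theorem~\ref{add:3} at all---it is imported verbatim from \cite{BMR}---so the only question is whether your argument stands on its own, and it does not: the decisive step of your Case 2 fails. You collapse each $[P_i]$ to a point by $\phi$ and assert that the induced map $g$ (with $g\circ\phi=\phi\circ f$) is $\bar Q$-linear, so that Theorem~\ref{forcing} applied to $g$ gives that $B$ forces $C$. But $g$ is not $\bar Q$-linear, and in general cannot be made so by any choice of $\phi$: if $J$ is the $P$-basic interval between two adjacent blocks, then $f(J)$ is an interval whose endpoints lie in blocks and which typically crosses other blocks; $g$ restricted to $\phi(J)$ is therefore constant on the $\phi$-image of each piece of $J$ that $f$ sends into a block, so $g$ has nondegenerate flat pieces and is only non-strictly monotone, never affine, on that $\bar Q$-basic interval. (Already in the interval analogue---a period-$4$ cycle $p_1<p_2<p_3<p_4$ with blocks $\{p_1,p_2\}$, $\{p_3,p_4\}$ interchanged by $f$---the collapsed map is constant on part of the middle interval, and no reparametrization removes a flat piece.) This is fatal because forcing is universally quantified over maps: exhibiting one map $g$ that happens to carry both a cycle of pattern $B$ and a cycle of pattern $C$ proves nothing; by Theorem~\ref{forcing} you must produce a cycle of pattern $C$ for the $\bar Q$-\emph{linear} model, and $g$ is not that model. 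Repairing this needs a genuinely additional ingredient: either prove that for maps which are merely monotone on each $\bar Q$-basic interval the set of patterns of cycles coincides with that of the linear model (the standard itinerary/covering-graph rigidity argument), or bypass $g$ altogether by noting that the $P$-basic intervals disjoint from $\bigcup_i[P_i]$ correspond bijectively, with the same covering relations, to the $\bar Q$-basic intervals, transferring the loop generated by $R$ to the $\bar Q$-linear map via Theorem~\ref{theorem:interval:graph}, and checking the resulting cycle has pattern $C$. You flagged ``$f$ descends to a $\bar Q$-linear map'' as a detail to verify; it is not a detail but a false statement.

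Two secondary problems. In Case 1, collapsing the $[P_i]$ crushes $R$ entirely onto $\bar Q$ (since $R\subseteq\bigcup_i[P_i]$), so the collapsed picture cannot witness the block structure of $C$ over $B$; with the paper's definition you must exhibit an actual cycle of pattern $B$ having exactly one point in each $[R_i]$ and compatible with the dynamics (for instance from fixed points of $f^m$ in the $[R_i]$, using $f([R_i])\supseteq[R_{\sigma(i)}]$), or at least verify that the quotient of $R$ by its \emph{own} blocks carries pattern $B$; your construction of the $R_i$ is the right start, but this step is absent. Finally, for the ``Moreover'' clause, ``populating the interiors of the $[P_i]$ with the appropriate intra-block configuration'' is not an argument: a representative of $C$ must be an actual cycle of the given map $f$, and producing one inside $\bigcup_i[P_i]$ requires working with loops of intervals inside the blocks together with the forcing hypothesis, not a placement of points.
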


\begin{theorem}[\cite{BMR}]\label{add:4}
	Assume that a pattern~$A$ forces a pattern~$B$ of period~$m$, where $A$ has no block structure over~$B$ and $B$ is not a doubling.  Then, for every integer $k \in \mathbb{N}$, the pattern~$A$ forces another pattern~$C_k$ of period~$km$ which has a block structure over~$B$.
\end{theorem}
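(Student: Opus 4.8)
The plan is to do everything inside one concrete map. By Theorem~\ref{forcing} I may fix a $P$-linear map $f\in\mathcal R$ with $P$ exhibiting $A$, and since $A$ forces $B$ this $f$ carries a cycle $R=\{x_0,x_1,\dots,x_{m-1}\}$ exhibiting $B$, with $f(x_i)=x_{i+1}$ (indices mod $m$). Fix $x_0\neq a$ and consider the first-return map $g=f^m$, which has $x_0$ as a fixed point. The target pattern $C_k$ should be read off a $g$-periodic orbit of exact period $k$ sitting in a small interval $D_0$ around $x_0$: spreading that orbit around $R$ by $f,f^2,\dots,f^{m-1}$ produces a single $f$-cycle $Q$ of period $km$, and if the $m$ pieces lie in pairwise disjoint convex hulls $D_0,\dots,D_{m-1}$ with $x_i\in D_i$ and $a\notin D_i$, then by construction $Q$ has a block structure over $R$, hence $C_k$ has a block structure over $B$. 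Because $Q$ lives in the $P$-linear map $f$, Theorem~\ref{forcing} immediately gives that $A$ forces $C_k$. So the whole theorem reduces to one local statement: the return map $g=f^m$ has, in a neighbourhood of $x_0$, periodic points of every period $k$, carried by the loop of intervals $D_0\to D_1\to\cdots\to D_{m-1}\to D_0$.

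For the local statement I would work with loops of intervals and Theorem~\ref{theorem:interval:graph}. A neighbourhood of $x_0$ within its branch is an interval, so $g$ restricted to a small closed interval $D_0\ni x_0$ is an interval map fixing $x_0$. To get $g$-orbits of all periods it suffices to exhibit a \emph{horseshoe}: two closed subintervals $U,V\subset D_0$ with disjoint interiors and $g(U)\supseteq D_0$, $g(V)\supseteq D_0$. Such a configuration contains a full shift on two symbols and therefore produces $g$-periodic points of every exact period $k$ (alternatively one may read off a Sharkovsky period-$3$ configuration and invoke Theorem~\ref{t:shar}). Exhibiting this covering is exactly where the hypotheses must be used.

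The heart of the argument — and the step I expect to be the main obstacle — is to convert \emph{``$A$ has no block structure over $B$''} into a genuine fold of $g$ in the interior of $D_0$. The intuition is a contrapositive: if, for every $i$, the return map $g=f^m$ were monotone on the maximal $P$-free interval about $x_i$, and if these intervals were pairwise disjoint, avoided $a$, and were permuted by $f$, then their intersections with $P$ would form a block decomposition of $P$ over $R$, exhibiting a block structure of $A$ over $B$ — contrary to hypothesis. Hence this monotone/disjoint/permuted picture must fail, and after discarding the single degenerate failure in which the only recurrence of $g$ near $x_0$ is an orientation-reversing period-$2$ return (this is precisely the period-doubling situation ruled out by the assumption that $B$ is not a doubling), the failure must originate from a point of $P$ lying strictly inside the block region. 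Since $f$ is $P$-linear, such a point is a turning point of $g$ interior to $D_0$. Making this dichotomy rigorous — in particular, choosing the block intervals so that they genuinely partition $P$ and are genuinely permuted, and cleanly isolating the doubling case — is the delicate part of the proof.

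To upgrade this fold into the required covering I would feed it into Lemma~\ref{locally:expanding}: any open subinterval of $D_0$ whose first $m$ iterates avoid $P$ is strictly lengthened by $g$, so the iterates of a small neighbourhood of the turning point cannot remain inside $D_0$ and must eventually stretch across it. Taking the two monotone branches of $g$ emanating from the turning point then yields the closed subintervals $U,V\subset D_0$ with $g(U)\supseteq D_0$ and $g(V)\supseteq D_0$. Finally I would assemble the pieces: set $D_1,\dots,D_{m-1}$ to be the $f$-images of $D_0$, shrinking $D_0$ if necessary so that the $D_i$ stay pairwise disjoint and avoid $a$ (possible since $R$ has $m$ distinct points, none equal to $a$); for each $k$ extract from the horseshoe a $g$-orbit of exact period $k$ in $D_0$; and apply Theorem~\ref{theorem:interval:graph} to the loop $D_0\to\cdots\to D_{m-1}\to D_0$ carrying the appropriate internal returns to realize an $f$-cycle $Q$ of exact period $km$. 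Exactness of the period follows from choosing the internal $g$-orbit of exact period $k$ together with the disjointness of the $D_i$, and the block structure over $B$ and the forcing by $A$ follow as in the first paragraph, completing the proof. (Consistency with Theorem~\ref{add:3}, which confines block-structured forced patterns to $\bigcup_i[P_i]$, may serve as a sanity check but is not needed for the construction.)
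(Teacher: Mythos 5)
First, a point of comparison: the paper does not prove Theorem~\ref{add:4} at all --- it is stated with the citation \cite{BMR} and imported as a known result, so there is no in-paper argument to measure your proposal against. Judged on its own merits, your proposal has a reasonable outer shell (work inside the $P$-linear map $f$, build cycles of period $km$ via a loop of intervals $D_0 \to D_1 \to \dots \to D_{m-1} \to D_0$ surrounding a representative $R$ of $B$, check exactness of period and the block structure from disjointness of the $D_i$, and invoke Theorem~\ref{forcing} to convert existence of cycles in $f$ into forcing). But the mathematical core is missing, and the sketch you give for it does not hold up.

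The central gap is your claimed dichotomy ``no block structure over $B$ $\Rightarrow$ a fold of $g=f^m$ interior to $D_0$.'' A block structure is a \emph{global} condition: it requires \emph{all} of $P$ to be partitioned into $m$ blocks. It can therefore fail for reasons that leave $f^m$ perfectly monotone near every point of the chosen $R$ --- most trivially when $m$ does not divide $|P|$ (cardinality alone forbids a block structure, yet the theorem must still be proved in that case), or because points of $P$ far from the orbit of $R$ cannot be grouped consistently, or because the relevant recurrence occurs near a \emph{different} representative $R'$ of $B$ than the one you fixed (your whole argument is pinned to one chosen $x_0$, and nothing guarantees that this particular representative carries the fold). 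Your contrapositive also garbles the objects involved: the intersections of ``maximal $P$-free intervals'' with $P$ are empty, so they cannot form blocks of $P$. Since you yourself flag making this step rigorous as ``the delicate part,'' the proof is essentially open exactly where the hypotheses must do their work. Two further concrete problems: (a) the hypothesis ``$B$ is not a doubling'' is misused --- the degenerate orientation-reversing period-$2$ return you want to discard is the situation where $A$ locally looks like a pattern with block structure over a doubling \emph{of} $B$ (a period-$2m$ pattern), whereas the theorem's hypothesis is that $B$ itself is not a doubling (of a period-$m/2$ pattern); these are different conditions, so the exclusion does not follow. (b) The upgrade ``fold $\Rightarrow$ horseshoe'' via Lemma~\ref{locally:expanding} does not go through as described: that lemma applies only to intervals whose first $m$ iterates avoid $P$ and return inside a single $P$-basic interval, and it yields length growth, not covering; a neighbourhood of a turning point of $f^m$ has iterates that meet $P$, the interval $D_0$ is not a $P$-basic interval, and obtaining $g(U)\supseteq D_0$ and $g(V)\supseteq D_0$ for disjoint $U,V$ requires controlling where the critical value and the endpoints of $D_0$ map --- none of which is established.
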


\begin{lemma}\label{basic:eventually:meeting:P}
	Let $P$ be a cycle of a $P$-linear map $f \in \mathcal{R}$. Suppose $P$ has no block structure. Then every $P$-basic interval $J$ eventually covers the convex hull of $P$. 
\end{lemma}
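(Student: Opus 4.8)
The plan is to translate the dynamics of $f$ into the combinatorics of how the basic intervals $f$-cover one another, and then to read off a block structure whenever that covering fails to propagate. Throughout I use that, since $f$ is $P$-linear and $P$ meets every branch (Theorem \ref{all:branches}), we have $f([P])\subseteq[P]$, while the restriction of $f$ to any basic interval is monotone and sends its endpoints into $P\cup\{a\}$; consequently the image of a basic interval, and hence each iterate $f^{n}(J)$, is a finite union of closed basic intervals. I read the conclusion ``$J$ eventually covers $[P]$'' as $\bigcup_{n\ge 0}f^{n}(J)\supseteq[P]$, equivalently that every basic interval is reached from $J$ along a chain of $f$-coverings.

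First I would set $Y=\overline{\bigcup_{n\ge 0}f^{n}(J)}$. By continuity of $f$, $f(Y)\subseteq\overline{f\big(\bigcup_{n\ge0}f^{n}(J)\big)}=\overline{\bigcup_{n\ge1}f^{n}(J)}\subseteq Y$, so $Y$ is forward invariant; being contained in $[P]$ and built from finitely many basic intervals, $Y$ is itself a finite union of closed basic intervals. The closure $\overline{J}$ carries an endpoint of $J$ lying in $P$, so $Y\cap P\neq\varnothing$; since $f$ permutes the $n$ points of $P$ in a single cycle, the only nonempty forward-invariant subset of $P$ is $P$ itself, whence $P\subseteq Y$. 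This sets up the decisive dichotomy: if $Y$ is connected, then it is a subtree containing all of $P$, hence contains the convex hull $[P]$, and we are done. The entire difficulty is therefore concentrated in the disconnected case, which I claim is incompatible with the no-block-structure hypothesis.

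So suppose $Y$ has components $C_{1},\dots,C_{s}$ with $s\ge 2$. Each $C_{i}$ is a connected union of closed basic intervals, hence contains an endpoint in $P$; set $P_{i}=P\cap C_{i}\neq\varnothing$. As $f(C_{i})$ is connected, $f$ induces a self-map $\sigma$ of $\{1,\dots,s\}$ via $f(C_{i})\subseteq C_{\sigma(i)}$, so that $f(P_{i})\subseteq P_{\sigma(i)}$. Because $f|_{P}$ is a bijection of the $n$ points of $P$, the map $\sigma$ must be surjective, hence a permutation, with $f(P_{i})=P_{\sigma(i)}$ and $\abs{P_{i}}=\abs{P_{\sigma(i)}}$; and since the orbit of $P$ is a single cycle it meets every component, forcing $\sigma$ to be a single $s$-cycle with $\abs{P_{i}}=n/s$ for all $i$. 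Now $f(a)=a$ and $a$ lies in at most one component, so $a\in C_{i_{0}}$ would give $\sigma(i_{0})=i_{0}$, impossible for an $s$-cycle with $s\ge 2$; hence $a\notin Y$. Consequently each $C_{i}$ lies in a single branch and is a nondegenerate closed interval $[u_{i},v_{i}]$ with $u_{i},v_{i}\in P$, so $C_{i}=[P_{i}]$, the hulls $[P_{i}]$ are pairwise disjoint and avoid $a$, and $\abs{P_{i}}=n/s\ge 2$. Applying the fixed-point property to $f^{s}:C_{1}\to C_{1}$ produces a point $x_{1}$ with $x_{i}:=f^{i-1}(x_{1})\in C_{i}$, giving a cycle $Q$ of period exactly $s$ with one point in each block. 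This is precisely a block structure of $P$ over $Q$, contradicting the hypothesis; therefore $Y$ must be connected and $Y=[P]$, which is the assertion.

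I expect the main obstacle to be the clean verification that $f$ genuinely \emph{permutes} the components $C_{i}$ and that the resulting partition satisfies every axiom of a block structure simultaneously: that the induced $\sigma$ is a single cycle, that $a\notin Y$ so that each component sits in one branch and equals $[P_{i}]$, that the blocks are pairwise disjoint with common cardinality $n/s\ge 2$ (so the structure is \emph{nontrivial}, the degenerate singleton case being excluded by the positive length of components once $a$ is expelled), and that the auxiliary cycle $Q$ really has period $s$. Once this bookkeeping is in place the contradiction with the no-block-structure hypothesis is immediate; notably, this reading of the statement requires no appeal to the expansion estimate of Lemma \ref{locally:expanding}, the single-cycle structure of $P$ together with the disconnection argument sufficing on its own.
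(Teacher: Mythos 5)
Your disconnection argument is sound and is essentially the paper's: the paper also forms $\mathcal{A}=\bigcup_{i\ge 0}f^i(J)$, observes that $f$ permutes its components, and concludes that the traces $A_i\cap P$ are non-trivial blocks of $P$. In fact you are more careful than the paper here — you rule out $a$ lying in any component (via the $s$-cycle structure of $\sigma$), identify each component with the convex hull of its trace, and actually produce the auxiliary cycle $Q$ by the interval fixed-point theorem, all of which the paper leaves implicit. The genuine gap is in your reading of the conclusion. ``$J$ eventually covers $[P]$'' here means: there is a \emph{single} $p$ with $f^{p}(J)\supseteq[P]$. That is what the paper proves (its proof ends ``there exists $p\in\mathbb{N}$ such that $f^p(J)$ covers the entire convex hull of $P$''), and it is the form in which the lemma is used in Theorem \ref{connection:mixing:no:block:structure}, whose proof extracts ``$s\in\mathbb{N}$ with $s>\ell\cdot 2q$ such that $f^{s}(U)=[P]$'' — the statement needed for exactness. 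Your connected case yields only the weaker claim $\overline{\bigcup_{n\ge0}f^{n}(J)}\supseteq[P]$; since the sets $f^{n}(J)$ need not be nested, different iterates may cover different parts of $[P]$ at different times, and no single iterate covering $[P]$ follows from the union covering it.

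Closing that gap is precisely the content of the paper's second paragraph, which your last remark declares unnecessary. After connectivity of $\mathcal{A}$ gives $\mathcal{A}\supseteq[P]$, the paper invokes the canonical ordering (Theorem \ref{canonical:ordering}) to obtain the black points $p_0,p_1,p_2$ of $P$ closest to $a$, sets $B=[p_0,a]\cup[p_1,a]\cup[p_2,a]$, and uses two facts: some iterate satisfies $f^{m}(J)\supseteq B$, and $f(B)\supseteq B$ because the $p_i$ are black. Once an image contains this $f$-expanding core it never loses it, and coverage then accrues: if $f^{j}(J)\supseteq[x,a]$ for $x\in P$, then $f^{j+1}(J)\supseteq[f(x),a]$ while still containing $B$; running this around the whole cycle produces one iterate containing $\bigcup_{x\in P}[x,a]=[P]$. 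Some argument of this kind — an invariant core that, once covered, stays covered and sweeps up the rest of $[P]$ — is indispensable (it is a connectivity-plus-black-points argument, not the expansion estimate of Lemma \ref{locally:expanding}, so your remark about that lemma is beside the point). As written, your proposal proves a strictly weaker statement than the lemma as stated and as applied.
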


\begin{proof}
	Consider the set $\mathcal{A} =  \displaystyle \bigcup_{i=0}^{\infty} f^i(J)$. We first claim that $\mathcal{A}$ is connected. Suppose, on the contrary, that $\mathcal{A}$ has $k>1$ connected components $A_1, A_2, \dots, A_k$. Since $f(A_i)$ is a connected subset of $\mathcal{A}$ for each $i$, there exists a permutation $\theta : \{1,2,\dots,k\} \to \{1,2,\dots,k\}$ such that $f(A_i) = A_{\theta(i)} \quad \text{for all } i$. In particular, the sets $A_i \cap P$, $i=1,2,\dots,k$, form non-trivial \emph{blocks} of $P$. This contradicts the assumption that $P$ has no \emph{block structure}. Hence, $\mathcal{A}$ is connected.

	Next, let the \emph{branches} of $\tau$ be \emph{canonically ordered}, and let $p_i$ ($i=0,1,2$) denote the point of $P$ closest to the \emph{branching point} $a$ in each \emph{branch} $b_i$. Define $B$ as the convex hull of the points $p_0, p_1$, and $p_2$.  Since $\mathcal{A}$ is connected, and the points of $P$ are cyclically permuted by $f$,  there exists $m \in \mathbb{N}$ such that $f^m(J) \supseteq B$. Moreover, it is easy to see  that $f(B) \supseteq B$. Consequently, for all $j \in \mathbb{N}$,  $j \geqslant  m$ we have $f^{j+1}(J) \supseteq f^{j}(J)$. By induction, it follows that there exists $p \in \mathbb{N}$ such that $f^p(J)$  \emph{covers} the entire convex hull of $P$.

\end{proof}

\begin{lemma}\label{open:set:eventually:meets:P}
	Let $P$ be a cycle of a $P$-linear map $f \in \mathcal{R}$. Suppose $P$ has no block structure. Then, for any open interval in $\tau$ contained in $[P]$,  there exists $m \in \mathbb{N}$ such that $f^m(U) \cap P \neq \varnothing$. 
\end{lemma}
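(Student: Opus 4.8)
The plan is to argue by contradiction, exploiting the fact that a $P$-linear map is \emph{Markov} with respect to the partition of $[P]$ induced by $P\cup\{a\}$ (that is, the image of every basic interval is a union of whole basic intervals). Assume, contrary to the claim, that $f^i(U)\cap P=\varnothing$ for every $i\ge 0$. Since each $f^i(U)$ is connected and misses $P$, it lies in a single component of $[P]\setminus P$, hence in a single $P$-basic interval (the components abutting $a$ will be treated separately, the only genuine complication being an image that contains the branching point $a$ in its interior). Because the endpoints of every basic interval lie in $P\cup\{a\}$ and $f$ maps $P\cup\{a\}$ into itself, the image $f(J_s)$ of each basic interval $J_s$ is a union of whole basic intervals, so $f$ is affine on $J_s$ with a well-defined slope $\sigma_s=\lvert f(J_s)\rvert/\lvert J_s\rvert$. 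Writing $v_s=\lvert J_s\rvert$, letting $r$ be the number of basic intervals, and letting $M$ be the associated $0$–$1$ transition matrix, one has the fundamental relation $\sigma_s v_s=\sum_t M_{st}v_t$.

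First I would record that $M$ is \emph{primitive}. Indeed, Lemma~\ref{basic:eventually:meeting:P} shows that every basic interval eventually covers $[P]$, and since a connected subset of the tree $\tau$ containing $P$ contains its convex hull, $f([P])\supseteq[P]$; hence once $f^{p}(J_s)\supseteq[P]$ this persists for all larger exponents, and taking $N_0$ to be the maximum of these exponents over all $s$ gives $M^{N_0}>0$. Primitivity forces some basic interval $J_{s^*}$ whose image covers at least two basic intervals (otherwise $M$ would be a permutation matrix, which is never strictly positive when $r>1$, and here $r\ge 3$). The heart of the argument is then a telescoping estimate: from $\sigma_s v_s=\sum_t M_{st}v_t$ one gets $\sigma_s\ge v_t/v_s$ for every successor $t$ of $s$, with a strict gain $\sigma_s\ge (v_{s_{j+1}}/v_s)(1+\beta)$ at any step where $f(J_s)$ covers a second piece, where $\beta=\min_t v_t/\max_t v_t>0$. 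Multiplying along any admissible itinerary $s_0\to s_1\to\cdots\to s_k$ yields $\prod_{j<k}\sigma_{s_j}\ge (v_{s_k}/v_{s_0})\,(1+\beta)^{B}\ge \beta(1+\beta)^{B}$, where $B$ is the number of branching steps in the path.

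Finally I would show that $B$ grows linearly in $k$. A cycle in $M$ consisting only of non-branching (out-degree-one) nodes would be a forward-invariant trap, contradicting $M^{N_0}>0$; hence every path of length $r$ meets a branching node, giving $B\ge\lfloor k/r\rfloor$. Consequently, as long as the itinerary of an interval $K$ avoids $P$ for $k$ steps, $\lvert f^k(K)\rvert=\bigl(\prod_{j<k}\sigma_{s_j}\bigr)\lvert K\rvert\ge \beta(1+\beta)^{\lfloor k/r\rfloor}\lvert K\rvert\to\infty$, which is impossible since all images stay inside $[P]$ and hence have length at most $\mathrm{diam}([P])$. Applying this to $K=U$ gives the contradiction. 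I expect the main obstacle to be precisely this uniform-expansion step — upgrading the pointwise inequalities $\sigma_s\ge v_t/v_s$ to genuine exponential growth through the branching nodes — together with the bookkeeping when some $f^i(U)$ contains the branching point $a$ in its interior, where $f$ fails to be affine across $a$; in that case I would first pass to a subinterval of $U$ whose orbit stays off $a$, or handle the at most three affine pieces separately, after which the same slope estimate applies. Note that Lemma~\ref{locally:expanding} is the qualitative local version of this expansion and can be invoked to streamline the single-return step.
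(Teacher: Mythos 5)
Your route is genuinely different from the paper's. The paper argues softly: assuming the orbit of $U$ never meets $P$, it pigeonholes infinitely many images $f^i(U)$ into a single $P$-basic interval $J$, invokes Lemma~\ref{locally:expanding} to make successive returns strictly longer, concludes that the union of all returns has finitely many components, and gets a contradiction from the largest component. You instead exploit the Markov structure of the $P$-linear map: the relation $\sigma_s v_s=\sum_t M_{st}v_t$, primitivity of the transition matrix (deduced correctly from Lemma~\ref{basic:eventually:meeting:P}), and a telescoping estimate showing that every admissible itinerary picks up a factor $(1+\beta)$ at least once in every $r$ steps, so that $\lvert f^k(K)\rvert\to\infty$. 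This is heavier machinery, but it buys two things the paper's argument lacks: an explicit exponential lower bound, and a transparent use of the no-block-structure hypothesis (it enters exactly once, through primitivity). By contrast, the paper's proof never visibly invokes that hypothesis at all — it is hidden inside Lemma~\ref{locally:expanding}, which the paper moreover applies to return times $m$ that can exceed $n$, outside that lemma's stated hypothesis $1<m<n$. Your argument does not have this defect.

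The one genuine weak point is your treatment of images that swallow the branching point $a$, and of your two proposed patches the first one cannot be carried out: a subinterval of $U$ whose entire forward orbit avoids $a$ need not exist — indeed, since (as the theorem this lemma feeds into shows) $f$ is exact, preimages of $a$ are dense in $[P]$, so \emph{no} open subinterval has an orbit avoiding $a$. Your second patch is the right one, but it needs one observation to close it: because $a$ is a fixed point, once $f^{i_0}(U)\ni a$ we have $f^i(U)\ni a$ for all $i\ge i_0$; and an interval with endpoint $a$ that avoids $P$ lies in a single basic interval adjacent to $a$, hence is mapped affinely onto another interval with endpoint $a$ avoiding $P$. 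So after the first time the orbit meets $a$ you may follow a single component of $f^{i_0}(U)\setminus\{a\}$ forever: its itinerary is an admissible path through the (at most three) basic intervals adjacent to $a$, no further cutting ever occurs, and your telescoping estimate applies verbatim to that component, giving the contradiction. With this patch your proof is complete. It is only fair to add that the paper's own proof silently has the same gap: its pigeonhole step also presumes that every $f^i(U)$ lies in a single basic interval, which fails precisely when the orbit of $U$ absorbs $a$.
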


\begin{proof}
Let us assume to the contrary that $f^i(U) \cap P = \varnothing$ for all $ i \in \mathbb{N}$. Then, there exists a $P$-\emph{basic interval} $J$ such that $f^i(U) \subset J$ for infinitely many $ i \in \mathbb{N}$. Let $M = \{ j \in \mathbb{N}   $  $ | f^j(U) \subset J\}$. Let $\Gamma = \displaystyle \bigcup_{i \in M} f^i(U)$. From Lemma \ref{locally:expanding}, if $i_1,i_2 \in M$, $i_2 >i_1$, then the length of $f^{i_2}(U)$ is larger than the length of $f^{i_1}(U)$. But the total length of the $P$-\emph{basic interval} $J$ is finite. 	So, $\Gamma$ can have at most finitely many components. So, we can choose the largest component $C$ of $\Gamma$. By assumption, there exists $\ell \in \mathbb{N}$ such that $f^{\ell}(C) \subset J$. But this means by Lemma \ref{locally:expanding} that length of $f^{\ell}(C)$ is larger than the length of $C$ which is a contradiction, since  $C$ is the largest component of $\Gamma$.
\end{proof}

We now prove the main result of this section.

\begin{theorem}\label{connection:mixing:no:block:structure}
	Let $P$ be a cycle of a $P$-linear map $f \in \mathcal{R}$. Then $f$ is exact if and only if $P$ has no block structure. 
\end{theorem}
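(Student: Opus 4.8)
The plan is to prove the two implications separately, using a block structure to obstruct exactness in one direction and the expansion estimates of Lemmas~\ref{locally:expanding}, \ref{open:set:eventually:meets:P}, and~\ref{basic:eventually:meeting:P} to force exactness in the other. Throughout I work inside the convex hull $[P]$: since a $P$-linear map collapses each component of $\tau \setminus [P]$ onto a single point of $P \subset [P]$, the essential dynamics lives on $[P]$, and it suffices to show that every nonempty open interval $U \subseteq [P]$ has an iterate covering $[P]$.

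First I would show that a (nontrivial) block structure obstructs exactness. Suppose $P$ admits a block structure over a cycle $Q$, with blocks $P = P_1 \cup \dots \cup P_m$. Each convex hull $[P_i]$ is a nondegenerate interval lying in a single branch and avoiding $a$, and the blocks are permuted, with $f(P_i) = P_j$ whenever $f(x_i) = x_j$. Because $f$ is $P$-linear and $[P_j]$ is convex and contained in one branch, affinity on the basic subintervals of $[P_i]$ forces $f([P_i]) \subseteq [P_j]$; hence the set $W := \bigcup_i [P_i]$ satisfies $f(W) \subseteq W$. Since no $[P_i]$ contains $a$, the set $W$ is a proper forward-invariant subset of $\tau$. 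Taking $U$ to be any nonempty open subinterval of the interior of some $[P_i]$ gives $f^n(U) \subseteq W \subsetneq \tau$ for all $n$, so $f$ is not exact. This establishes the contrapositive ``exact $\Rightarrow$ no block structure''.

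For the converse, assume $P$ has no block structure; I claim every open interval $U \subseteq [P]$ has an iterate that contains an entire $P$-basic interval $J$. Granting this claim, Lemma~\ref{basic:eventually:meeting:P} supplies $p$ with $f^p(J) \supseteq [P]$, so the corresponding iterate of $U$ covers $[P] = \tau$, proving exactness. To establish the claim I would iterate $U$ and track how its images meet $P$. By Lemma~\ref{open:set:eventually:meets:P} some image first meets $P$; passing to the maximal subinterval of its basic interval whose first $m-1$ iterates avoid $P$ (exactly as in the proof of Lemma~\ref{locally:expanding}), the endpoints of this maximal interval land in $P$ under $f^{m-1}$, and monotonicity of $f^{m-1}$ forces its image to be precisely a full basic interval. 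The remaining work is to transfer this ``full basic interval in the image of the maximal extension'' to the image of $U$ itself: the strict length increase of Lemma~\ref{locally:expanding} on each return, together with the finiteness of the collection of basic intervals and the fact (from the no-block-structure hypothesis, via Lemma~\ref{open:set:eventually:meets:P}) that the images keep meeting $P$, should prevent the images of $U$ from remaining proper subintervals of basic intervals forever, so that some iterate of $U$ contains a basic interval outright.

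The main obstacle is precisely this last step --- upgrading ``the image eventually meets $P$'' to ``the image eventually contains a whole basic interval.'' The delicate point is that a single $P$-linear step need not be expanding, so one cannot argue by naive length growth; expansion is guaranteed only on first returns to a given basic interval (Lemma~\ref{locally:expanding}). The argument must therefore combine the maximal-extension trick with a careful bookkeeping of which basic intervals the orbit of $U$ visits, using finiteness of the basic intervals and the no-block-structure hypothesis to force a genuine return that fills a basic interval, rather than an ever-shrinking sequence of partial overlaps.
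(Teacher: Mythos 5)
Your first implication (block structure $\Rightarrow$ not exact) is correct and matches the paper's argument: forward images of an open subinterval of a block's convex hull stay inside the union of the hulls of the blocks, which is a proper subset of $[P]$. The genuine gap is in the converse, and it is exactly the one you flag yourself: you never prove that some iterate of $U$ contains a whole $P$-basic interval; you only assert that strict length growth plus finiteness of the collection of basic intervals ``should prevent'' the images of $U$ from remaining proper subintervals forever. As you correctly observe, Lemma~\ref{locally:expanding} only controls an interval whose intermediate iterates avoid $P$ and which returns into a \emph{fixed} basic interval, so images of $U$ that straddle points of $P$ or drift among different basic intervals are not controlled by it. The ``careful bookkeeping'' you defer to is precisely the mathematical content of this direction of the theorem, and it is absent from your proposal.

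The paper closes this hole with a periodic-point anchoring device that makes the bookkeeping unnecessary. Once $f^m(U)$ contains a point $x_0 \in P$ (Lemma~\ref{open:set:eventually:meets:P}), it chooses $K \subset f^m(U)$ lying inside a basic interval $J$ having $x_0$ as an \emph{endpoint}, and then iterates the single map $f^{2q}$, where $q$ is the period of $P$. Since $f^{2q}(x_0) = x_0$, and doubling the period cancels any side-switch or orientation flip at $x_0$ that $f^{q}$ alone might cause, each image $f^{j \cdot 2q}(K)$ either already covers $J$ or is again an interval inside $J$ anchored at the same endpoint $x_0$. Anchoring makes the successive images nested at $x_0$, so Lemma~\ref{locally:expanding} can be applied along the entire sequence, giving strictly increasing lengths inside the finite interval $J$; hence some $f^{\ell \cdot 2q}(K)$ covers $J$, and then Lemma~\ref{basic:eventually:meeting:P} finishes exactly as you intended. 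It is this anchoring at a periodic point --- not finiteness of the set of basic intervals --- that rules out the ``ever-shrinking sequence of partial overlaps'' you worry about; to complete your write-up you would need to add this step (or an equivalent substitute for it).
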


\begin{proof}
	Assume first that $P$ has a \emph{block structure}. Then every \emph{open interval} $K$ contained in the convex hull of a \emph{block} has its forward images contained within convex hulls of \emph{blocks}. Consequently, $f$ cannot be \emph{exact} .

	Now suppose $P$ has no \emph{block structure}, and let its period be $q$. Take an  \emph{open interval} $U \subset [P]$. By Lemma~\ref{open:set:eventually:meets:P}, there exists $m \in \mathbb{N}$ such that $f^m(U)$ contains a point $x_0 \in P$. Choose a subset $K \subset f^m(U)$ lying entirely in a $P$-\emph{basic interval} $J$ with $x_0$ as an endpoint.  
	
	If either $f^q(K)$ or $f^{2q}(K)$ already \emph{covers} $J$, the claim follows immediately. Otherwise, depending on the orientation about $x_0$, one of these images lies strictly inside $J$. Without loss of generality, assume that $f^{2q}(K) \subsetneq J$; the other case is analogous. Consider the sequence of sets, $\{ f^{j \cdot 2q}(K) : j \in \mathbb{N} \}$. Each element of this sequence has $x_0$ as an endpoint. By Lemma~\ref{locally:expanding}, $f^{2q}(K)$ is strictly longer than $K$, and $f^{4q}(K)$ is strictly longer than $f^{2q}(K)$. If $f^{4q}(K)$ covers $J$, we are done; otherwise, $f^{4q}(K) \subsetneq J$, and again Lemma~\ref{locally:expanding} ensures that $f^{8q}(K)$ has strictly greater length than $f^{4q}(K)$. Since, the length of $J$ is finite,  repeating this argument,  we conclude that there exists $\ell \in \mathbb{N}$ such that $f^{\ell \cdot 2q}(K)$ \emph{covers} $J$. Since, $K \subseteq U$, so  $f^{\ell \cdot 2q}(U)$ also \emph{covers} $J$.  
	
	Finally, by Lemma~\ref{basic:eventually:meeting:P}, there exists $s \in \mathbb{N}$ with $s > \ell \cdot 2q$ such that $f^s(U) = [P]$. This proves that the map $f$ is an \emph{exact map}. 
\end{proof}

\section{Properties of Regular exact patterns}\label{section:properties:mixing}
In this section we will study general properties of \emph{regular exact patterns}. We begin by formulating a criterion that allows us to identify \emph{regular exact patterns} based on its \emph{rotation pair}.

\begin{theorem}\label{block:structure:necessary}
	Let $P$ be a cycle of a $P$-linear map $f \in \mathcal{R}$ with rotation pair $(k,m)$. If $P$ has a block structure with $q$ points in each block, then $q$ divides both $k$ and $m$. In particular, if $k$ and $m$ are coprime, then $P$ is an exact cycle.
\end{theorem}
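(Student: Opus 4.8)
The plan is to reduce the rotation pair of $P$ to that of the cycle $Q$ over which $P$ has its block structure, and then invoke Theorem~\ref{connection:mixing:no:block:structure} for the final clause. First observe that one divisibility is immediate: if the block structure consists of $r$ blocks, each of cardinality $q$, then $r$ is the period of $Q$ and $m = |P| = qr$, so $q \mid m$ for free. The entire content of the first assertion therefore lies in proving $q \mid k$, where $k = d(\Gamma_P)$ is the displacement of the fundamental point loop of $P$.

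The crucial geometric observation is that \emph{each block lies in a single branch of} $\tau$. Indeed, condition~(1) in the definition of block structure says the convex hull $[P_i]$ does not contain the branching point $a$; since a connected subset of $\tau$ avoiding $a$ must be contained in one branch, every point of $P_i$ lies on the same branch as the distinguished point $x_i \in Q \cap P_i$. Because the displacement $d(u \to v)$ depends only on the branches containing $u$ and $v$, it follows that for any arrow with $u \in P_i$ and $v \in P_j$ we have $d(u \to v) = d(x_i \to x_j)$. Next I would track $\Gamma_P : x \to f(x) \to \dots \to f^{m-1}(x) \to x$, writing $\ell_i$ for the index of the block containing $f^i(x)$. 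Conditions~(2)--(3) force $f(P_{\ell_i}) = P_{\ell_{i+1}}$ together with $f(x_{\ell_i}) = x_{\ell_{i+1}}$, so the sequence $(\ell_i)$ follows the dynamics of $Q$ and is periodic with period $r$. Hence over the $m = qr$ arrows of $\Gamma_P$, the branch-sequence repeats the branch-sequence of the fundamental point loop $\Gamma_Q$ of $Q$ exactly $q$ times. Summing displacements and using the branch-invariance just established yields $d(\Gamma_P) = q\, d(\Gamma_Q)$. Since the displacement of any point loop is an integer, $d(\Gamma_Q) \in \mathbb{Z}$, and therefore $q \mid k$, completing the first assertion.

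For the final claim, suppose $\gcd(k,m) = 1$. If $P$ admitted a (nontrivial) block structure with $q > 1$ points per block, then by the above $q \mid \gcd(k,m) = 1$, which is absurd; hence $P$ has no block structure, and by Theorem~\ref{connection:mixing:no:block:structure} the cycle $P$ is exact.

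The main obstacle --- really the only step that is not purely formal --- is justifying that displacement is a branch-invariant and that $\Gamma_P$ decomposes into $q$ faithful copies of $\Gamma_Q$. This rests entirely on the single-branch confinement of each block (from condition~(1)) and on compatibility condition~(3), which synchronizes the block permutation with the cyclic action of $f$ on $Q$. Once these are in place, the divisibility $q \mid k$ reduces to additivity of $d$ along the loop. I would also take care to record explicitly that ``block structure'' is understood to be nontrivial (i.e.\ $q > 1$), so that the contradiction with coprimality is legitimate.
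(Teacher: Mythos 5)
Your proof is correct and takes essentially the same route as the paper: the paper collapses each block to a single point to obtain the cycle $Q$ of period $m' = m/q$, asserts that $P$ and $Q$ have the same rotation number, and concludes $k = k'q$ — which is exactly the identity $d(\Gamma_P) = q\,d(\Gamma_Q)$ that your loop decomposition establishes. The only difference is that you explicitly justify (via single-branch confinement of blocks and branch-invariance of the displacement $d$) the step the paper states without proof, and you likewise spell out the coprimality conclusion that the paper leaves implicit.
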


\begin{proof}
	Suppose $P$ has  \emph{block structure}. Since $P$ has period $m$ and each \emph{block} contains $q$ points, it follows that $q$ divides $m$. Collapsing each \emph{block} to a single point yields a cycle $Q$ of period $m' = \tfrac{m}{q}$. Let the \emph{rotation pair} of $Q$ be $(k',m')$. Since $P$ and $Q$ have the same \emph{rotation number}, we have $\frac{k}{m} = \frac{k'}{m'} = \frac{k'}{\frac{m}{q}}$. This implies $k = k' q$, and hence $q$ divides $k$. Therefore, $q$ divides both $k$ and $m$, completing the proof.
\end{proof}

Next, we show that a \emph{point loop} in a \emph{triod} $\tau$ persists under sufficiently small \emph{perturbations} of the map.

\begin{theorem}\label{loop:maintainance}
	Let $f \in \mathcal{R}$. Let $\Gamma: x_0 \to x_1 \to x_2 \to \dots x_{n-1} \to x_0$ be a point loop for $f$ in $\tau$, $x_i \in \tau$, $i \in \{ 0,1, 2 , \dots n-1\}$. Then, there exists a neighborhood $N$ of $f$ in $\mathcal{R}$, such that for each $g \in N$, $\Gamma$ is a point loop for $g$ in $\tau$. 
\end{theorem}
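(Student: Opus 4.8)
The plan is to reduce the persistence of the entire loop to the persistence of each of its finitely many arrows, treat a single arrow, and then intersect the resulting neighborhoods. Since $\Gamma$ is made up of the $n$ arrows $x_i \to x_{i+1}$ (indices modulo $n$), it suffices to produce, for each $i$, an open neighborhood $N_i$ of $f$ in $\mathcal R$ throughout which the arrow $x_i \to x_{i+1}$ survives; then $N = \bigcap_{i=0}^{n-1} N_i$ is an open neighborhood of $f$ and $\Gamma$ is a point loop for every $g \in N$. I would first record a cleaner description of an arrow. Every $g \in \mathcal R \subset \mathcal U$ fixes $a$, so $g([a,x_i])$ is a subcontinuum of the dendrite $\tau$ that contains $a$; as a connected subset of a dendrite containing two points contains the arc between them, the arrow $x_i \to x_{i+1}$ holds for $g$ exactly when $g([a,x_i]) \supseteq [a,x_{i+1}]$ --- that is, when the image of $[a,x_i]$ reaches at least as far as $x_{i+1}$ along the branch $b$ carrying $x_{i+1}$.

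To make this reach compatible with the metric $D$, I would introduce the oriented height toward $b$, the function $\phi_b : \tau \to \R$ given by $\phi_b(w) = \psi(w)$ for $w \in \overline b$ and $\phi_b(w) = -\psi(w)$ otherwise. Checking the two cases in the definition of $d_\tau$ shows that $\phi_b$ is $1$-Lipschitz with $\phi_b(a)=0$, and that $w \geqslant x_{i+1}$ is equivalent to $\phi_b(w) \geq \psi(x_{i+1})$. Hence, writing $M_i(h) = \sup_{z \in [a,x_i]} \phi_b(h(z))$, the arrow $x_i \to x_{i+1}$ holds for $h$ precisely when $M_i(h) \geq \psi(x_{i+1})$, and the Lipschitz estimate $\lvert \phi_b(g(z)) - \phi_b(f(z)) \rvert \leq d_\tau(g(z),f(z)) \leq D(f,g)$ gives $\lvert M_i(g) - M_i(f) \rvert \leq D(f,g)$. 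Therefore, whenever $f$ realizes the arrow with strict slack, $M_i(f) > \psi(x_{i+1})$, the choice $N_i = \{\, g : D(f,g) < M_i(f) - \psi(x_{i+1}) \,\}$ keeps $M_i(g) > \psi(x_{i+1})$ for all $g \in N_i$, so the arrow persists. This disposes of the generic situation and, assembled over $i$, yields the neighborhood $N$.

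The step I expect to be the main obstacle is the degenerate, tangential case $M_i(f) = \psi(x_{i+1})$, where $f([a,x_i])$ just touches $x_{i+1}$ without overshooting it along $b$. Here the estimate only gives $M_i(g) \geq \psi(x_{i+1}) - D(f,g)$, which is too weak: an arbitrarily small perturbation pulling $f$ back near its maximizer could, in principle, delete the arrow. The resolution must therefore show that such unremovable tangencies do not occur for arrows of a loop --- for instance, that a witness $z_i \in [a,x_i]$ with $f(z_i) \geqslant x_{i+1}$ can always be chosen so that $f$ strictly overshoots $x_{i+1}$, exploiting the recurrence built into the loop together with the regularity of $f$ (e.g. the length-$3$ black loops guaranteed by Theorem~\ref{black:loop:length:3}), or, alternatively, passing through Theorems~\ref{loops:orbits:connection:1} and~\ref{theorem:interval:graph} to a genuine periodic orbit and perturbing the associated loop of admissible intervals, possibly after replacing $f$ by a $P$-adjusted map via Theorem~\ref{P:adjusted}. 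Securing strict overshoot for every arrow is the delicate point on which the argument turns; once it is in hand, the finite intersection $N = \bigcap_i N_i$ finishes the proof exactly as in the strict case.
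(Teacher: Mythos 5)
Your ``strict slack'' case is, in substance, the paper's entire proof: the paper's argument consists of asserting that for each $i$ there is a witness $z_i$ with $x_i > z_i$ and $f(z_i) > x_{i+1}$, and then observing that strict inequalities of this form survive all $D$-small perturbations. Your reduction to individual arrows, the characterization of an arrow via $g([a,x_i]) \supseteq [a,x_{i+1}]$, the $1$-Lipschitz height function $\phi_b$, and the estimate $\lvert M_i(g) - M_i(f)\rvert \leq D(f,g)$ give a quantitative and fully rigorous version of exactly that argument, with an explicit neighborhood $N_i = \{\, g : D(f,g) < M_i(f) - \psi(x_{i+1})\,\}$. So on the case $M_i(f) > \psi(x_{i+1})$ you have done everything the paper does, and done it more carefully.

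Where you and the paper part ways is the tangential case $M_i(f) = \psi(x_{i+1})$, which you flag as the crux and leave open. You should know that the paper does not close this case either: it closes it by fiat, claiming that ``by definition of a point loop'' the witnesses satisfy the \emph{strict} inequalities, even though the definition of an arrow in Section 1 only demands $x_i \geqslant z_i$ and $f(z_i) \geqslant x_{i+1}$. Moreover, your proposed resolution --- proving that unremovable tangencies ``do not occur for arrows of a loop'' --- cannot work, because they do occur, and under the non-strict definition the statement is actually false. Take $P$ the primitive $3$-cycle $\{p_0,p_1,p_2\}$ with $\psi(p_0)=\psi(p_1)=\psi(p_2)$, $f$ the $P$-linear map, and $\Gamma$ the fundamental loop $p_0 \to p_1 \to p_2 \to p_0$; then $f([a,p_i]) = [a,p_{i+1}]$ exactly, so $M_i(f) = \psi(p_{i+1})$ and every witness is tangential. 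Now let $g_\epsilon$ be $f$ followed by the radial contraction $\psi \mapsto (1-\epsilon)\psi$ toward $a$. Then $D(f,g_\epsilon) \leq \epsilon$; since $f$ (hence $g_\epsilon$) sends each punctured branch into a different branch, $a$ remains the unique fixed point; and since $\psi(g_\epsilon^{\,n}(x)) \to 0$ for every $x$, the map $g_\epsilon$ has no cycles other than $\{a\}$, so $g_\epsilon \in \mathcal{R}$ vacuously. Yet no point of $[a,p_i]$ is mapped by $g_\epsilon$ as far as $p_{i+1}$, so $\Gamma$ is not a point loop for $g_\epsilon$. Consequently, none of your suggested repairs (black loops from Theorem~\ref{black:loop:length:3}, passing to periodic orbits via Theorems~\ref{loops:orbits:connection:1} and~\ref{theorem:interval:graph}, or $P$-adjusting via Theorem~\ref{P:adjusted}) can eliminate the degenerate case: the theorem is true only under the strict reading of ``point loop,'' i.e.\ loops whose arrows admit witnesses with $f(z_i) > x_{i+1}$. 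That is the reading the paper's proof silently adopts, and under that reading your strict-case argument already constitutes a complete proof; the gap you identified is not a gap in your argument so much as an inconsistency between the paper's definition and its proof, which propagates to the places where the theorem is applied to fundamental loops of $P$-linear maps.
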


\begin{proof}
	By definition of a \emph{point loop}, for each $x_i$, $i \in \{ 0,1, 2 , \dots n-1\}$,  there exists $z_i$ with $x_i > z_i$ and $f(z_i) > x_{i+1}$. Since these inequalities are strict, they remain valid for all maps sufficiently close to $f$ with respect to the metric $D$. Thus, there exists a neighborhood $N$ of $f$ in $\mathcal{R}$ such that for every $g \in N$, we still have $x_i > z_i$ and $g(z_i) > x_{i+1}$. Hence $\Gamma$ persists as a \emph{point loop} for $g$.
\end{proof}

From Theorem~\ref{bifurcation:one:third}, the \emph{color} associated with the points of a \emph{triod-twist pattern} $\pi$ is completely determined by its \emph{rotation number} $\rho(\pi)$. Specifically, if $\rho(\pi) < \tfrac{1}{3}$, the \emph{pattern} $\pi$ comprises only \emph{green} and \emph{black} points, whereas for $\rho(\pi) > \tfrac{1}{3}$, it consists solely of \emph{red} and \emph{black} points. In the special case $\rho(\pi) = \tfrac{1}{3}$, the \emph{pattern} $\pi$ corresponds to a \emph{primitive cycle} of period three and hence consists entirely of \emph{black} points.  \emph{Rotation number} can be thought of as a measure of the ``\emph{speed}'' of a \emph{pattern}. So, this observation naturally motivates the following classification of all \emph{regular patterns} according to their \emph{rotation numbers}.

\begin{definition}\label{green:red:definition} 
	A \emph{regular pattern} $\pi$ is:
	\begin{enumerate}
		\item \emph{slow}, if $\rho(\pi) < \tfrac{1}{3}$;
		\item \emph{fast}, if $\rho(\pi) > \tfrac{1}{3}$; and
		\item \emph{ternary}, if $\rho(\pi) = \tfrac{1}{3}$.
	\end{enumerate}
\end{definition}

Let $P$ be a cycle of a $P$-\emph{linear} map $f \in \mathcal{R}$.  
For each \emph{branch} $b_i$ $(i=0,1,2)$ of $\tau$, let $p_i$ denote the point of $P$ lying closest to the \emph{branching point} $a$.  
By Theorem \ref{canonical:ordering}, there exists a \emph{canonical ordering} of the \emph{branches} of $\tau$ such that $p_0, p_1, p_2$ are all \emph{black}.  
Throughout the rest of the paper, we shall assume that the \emph{branches} are \emph{canonically ordered} and that the points $p_0, p_1, p_2$ are chosen accordingly.

\begin{theorem}\label{green:p0:image:green}  
Let $P$ be a triod-twist cycle of a $P$-linear map $f \in \mathcal{R}$ whose period is strictly greater than $3$. Then:
\begin{enumerate}
	\item if $P$ is slow, at least one of the points $p_0, p_1, p_2$ is the image of a green point.  
	\item if $P$ is fast, at least one of the points $p_0, p_1, p_2$ is the image of a red point.  
\end{enumerate}

\end{theorem}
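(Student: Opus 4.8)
The plan is to prove both parts by contradiction, reducing the negation of the conclusion to the existence of a period-$3$ sub-orbit inside $P$, which is impossible since $P$ is a single cycle of period $n>3$. Throughout I read the colors via branch labels (indices mod $3$): a \emph{green} point stays on its branch, a \emph{black} point on $b_j$ maps into $b_{j+1}$, and a \emph{red} point on $b_j$ maps into $b_{j+2}=b_{j-1}$. The points $p_0,p_1,p_2$ exist and are black by Theorem~\ref{canonical:ordering}, and each is the point of $P$ closest to $a$ on its branch; I will use this minimality repeatedly.

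Consider first the slow case. Since $P$ is slow, Theorem~\ref{bifurcation:one:third} gives that $P$ has no red points, so every point of $P$ is green or black. Suppose, for contradiction, that none of $p_0,p_1,p_2$ is the image of a green point. Fix $i$ and let $z_i\in P$ be the unique point with $f(z_i)=p_i$ (single-valued, as $f$ permutes $P$ cyclically). If $z_i$ were green it would lie on the same branch as $p_i$, making $p_i$ the image of a green point, contrary to assumption; hence $z_i$ is black, and since black arrows go $b_{i-1}\to b_i$ while $p_i\in b_i$, we get $z_i\in b_{i-1}$.

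The key step is to identify $z_i$ with $p_{i-1}$, and this is the only place the triod-twist hypothesis is genuinely used. Because $p_{i-1}$ is the point of $P$ closest to $a$ on $b_{i-1}$, we have $p_{i-1}\leqslant z_i$; and $p_{i-1}$ is black, so $f(p_{i-1})\in b_i$, whence minimality of $p_i$ on $b_i$ gives $f(p_{i-1})\geqslant p_i$. If $p_{i-1}<z_i$, then order-preservation of the triod-twist cycle (Theorem~\ref{necesary:condition:triod:twist}), applied to the two points $p_{i-1}<z_i$ of $b_{i-1}$ whose images both lie on $b_i$, yields $f(p_{i-1})<f(z_i)=p_i$, contradicting $f(p_{i-1})\geqslant p_i$. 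Therefore $p_{i-1}=z_i$, i.e. $f(p_{i-1})=p_i$. Since $i$ was arbitrary, $p_0\to p_1\to p_2\to p_0$ is a periodic orbit of period $3$ (the $p_i$ are distinct, lying on different branches) contained in $P$; as $P$ is a single cycle this forces $n=3$, contradicting $n>3$. This proves part~(1).

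For the fast case the argument is parallel. By Theorem~\ref{bifurcation:one:third} a fast $P$ has no green points, so every point is red or black, and the $P$-preimage $z_i$ of $p_i$ is either red (and then on $b_{i+1}$) or black (and then on $b_{i-1}$). Assuming no $p_i$ is the image of a red point forces each $z_i$ to be black on $b_{i-1}$, and the identical minimality-plus-order-preservation argument gives $f(p_{i-1})=p_i$ for all $i$, producing the same forbidden period-$3$ sub-orbit. The only real obstacle is pinning down $z_i=p_{i-1}$; the rest is bookkeeping with branch labels and the canonical ordering, so I expect no further difficulty beyond checking that the preimage is single-valued and that $p_0,p_1,p_2$ are genuinely distinct.
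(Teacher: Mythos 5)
Your proof is correct and follows essentially the same route as the paper's: black preimages forced by the color dichotomy of Theorem~\ref{bifurcation:one:third}, minimality of the $p_i$ from the canonical ordering, and order-preservation of triod-twist cycles (Theorem~\ref{necesary:condition:triod:twist}), with the period~$>3$ hypothesis closing the argument. The only difference is bookkeeping: the paper uses period~$>3$ to extract a strict inequality $q_1>p_0$ and then contradicts order-preservation, while you use order-preservation to force $f(p_{i-1})=p_i$ for all $i$ and then contradict period~$>3$ via the resulting period-$3$ sub-orbit — the same ingredients arranged in the opposite order.
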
  

\begin{proof}  Assume that $P$ is a \emph{slow triod-twist cycle} of a $P$-linear map $f \in \mathcal{R}$.  Suppose, for contradiction, that none of the points $p_0, p_1, p_2$ is the image of a \emph{green} point. Then each must be the image of a \emph{black} point.

	Define $f^{-1}(p_i) = q_i$ and $f(p_i) = r_i$ for $i=0,1,2$. Since $q_i$ is a \emph{black} point for $i=0,1,2$, it follows that  $q_1 \geqslant p_0, \quad q_0 \geqslant p_2, \quad q_2 \geqslant p_1$. 
	
	Since,  the period of $P$ is strictly greater than $3$, at least one of these inequalities is strict. Suppose, for instance, that $q_1 > p_0$. Because $p_0$ is a \emph{black point}, we have $f(p_0) = r_0 > p_1$. But then $q_1 > p_0$ while $f(p_0) = r_0 > f(q_1) = p_1$, which implies that $P$ is not \emph{order-preserving}. This contradicts the fact that $P$ is a \emph{triod-twist} cycle (see Theorem \ref{necesary:condition:triod:twist}).
	
The argument in the case where $P$ is a \emph{fast triod-twist cycle} is entirely analogous, with ``\emph{green}'' replaced by ``\emph{red}.''  
\end{proof}

\begin{theorem}\label{k:n:green:forces:k+1:n+3}\label{k:n:red:forces:k+1:n+3}
	Let $f \in \mathcal{R}$. Let $f$ has a twist cycle $P$ with rotation pair $(k, n)$ and period strictly greater than $3$. Then,
	
	\begin{enumerate}
		\item if $P$ is slow,  then there exists a neighborhood $N$ of $f$ such that for each $g \in N$, $g$ has  an exact slow cycle with rotation pair $(k+1,n+3)$.

		\item if $P$ is fast, then there exists a neighborhood $N$ of $f$ such that for each $g \in N$, $g$ has  an exact fast cycle with rotation pair $(k+1,n+3)$. 
	\end{enumerate}

\end{theorem}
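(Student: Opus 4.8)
The plan is to prove the slow case in detail and to obtain the fast case by the symmetric argument, replacing ``green'' by ``red'' throughout (which is legitimate by Theorem~\ref{green:p0:image:green}(2)). Throughout I would work first with the map $f$ itself and invoke the perturbation only at the very end.

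First I would construct a point loop of the correct rotation pair. Since the branches are canonically ordered, the innermost points $p_0,p_1,p_2$ are all black, so the arrow criterion gives arrows $p_0\to p_1\to p_2\to p_0$, i.e.\ a black loop $\beta$ of length $3$ whose displacement is $3\cdot\tfrac13=1$; thus $rp(\beta)=(1,3)$. The fundamental point loop $\Gamma_P$ has $rp(\Gamma_P)=(k,n)$ and passes through $p_0\in P$, so concatenating $\Gamma_P$ with $\beta$ at the common vertex $p_0$ produces a point loop $\Gamma'$ with $d(\Gamma')=k+1$ and $|\Gamma'|=n+3$, that is $rp(\Gamma')=(k+1,n+3)$. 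An arithmetic check shows $\Gamma'$ is slow: from $3k<n$ we get $3(k+1)=3k+3<n+3$, hence $\tfrac{k+1}{n+3}<\tfrac13$.

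Next I would realize $\Gamma'$ as an honest cycle and pin down its period. By Theorem~\ref{loops:orbits:connection:1}(1) there is a periodic point $y$ with $f^{n+3}(y)=y$ whose itinerary follows the branches prescribed by $\Gamma'$; let $Q$ be its orbit. The delicate point is to show that $Q$ has period \emph{exactly} $n+3$ (so that $rp(Q)=(k+1,n+3)$) and that $Q$ carries \emph{no} block structure (so that $Q$ is exact by Theorem~\ref{connection:mixing:no:block:structure}). This is where Theorem~\ref{green:p0:image:green} enters: since $P$ is a slow twist of period $>3$, I may assume $p_0=f(g)$ for a green point $g$, with $g>p_0$ on the branch $b_0$. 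I would use this long green arrow together with the order-preserving property of twist cycles (Theorem~\ref{necesary:condition:triod:twist}) to argue that the itinerary of $\Gamma'$ is not a proper power of a shorter loop, forcing the period to be $n+3$, and to exclude a block structure: the appended $3$-cycle $p_0\to p_1\to p_2\to p_0$ forces any hypothetical block permutation to contain a $3$-cycle (so $3\mid m'$ for the collapsed cycle $Q'$), while the green excursion from $g$ down to $p_0$ on the same branch is incompatible with the blocks being pairwise disjoint subintervals avoiding $a$; feeding this into Theorem~\ref{block:structure:necessary} (a block of size $q$ forces $q\mid\gcd(k+1,n+3)$) should yield the contradiction. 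This step is essential precisely because $\gcd(k+1,n+3)$ need not equal $1$ (e.g.\ $(k,n)=(1,5)$ gives $(2,8)$), so exactness does \emph{not} come for free from coprimality.

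Finally, the perturbation statement follows from Theorem~\ref{loop:maintainance}: there is a neighborhood $N$ of $f$ in $\mathcal{R}$ on which every arrow of $\Gamma'$ persists, so $\Gamma'$ remains a point loop for each $g'\in N$, and Theorem~\ref{loops:orbits:connection:1}(1) again produces a realizing cycle. After shrinking $N$ so that the realized cycle exhibits the same pattern as $Q$ (hence has the same rotation pair and again no block structure), I would conclude that every $g'\in N$ possesses an exact slow cycle with rotation pair $(k+1,n+3)$. The main obstacle is the third paragraph: the naive concatenation controls only the rotation pair, and ruling out period collapse and hidden block structures—which is exactly what the green point and the order-preserving hypothesis are for—is the crux of the argument.
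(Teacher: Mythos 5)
Your construction coincides with the paper's: append the black loop $p_0\to p_1\to p_2\to p_0$ to the fundamental loop of $P$ at a point $p_0$ which, by Theorem~\ref{green:p0:image:green}, may be taken to be the image of a green point, realize the enlarged loop by a cycle, and invoke Theorems~\ref{loops:orbits:connection:1}, \ref{loop:maintainance} and \ref{connection:mixing:no:block:structure}. You also correctly isolate the crux: exactness cannot come from coprimality (your example $(k,n)=(1,5)$ is apt), so a block structure must be excluded by hand. But at precisely this point the proposal stops being a proof. The route you sketch---the appended $3$-cycle ``forces a $3$-cycle in the block permutation,'' the green excursion is ``incompatible'' with disjointness of blocks, and Theorem~\ref{block:structure:necessary} then ``should yield the contradiction''---is not an argument, and it is not clear it can be made into one: the blocks are subsets of the new cycle $Q_g$, not of $P$, so the order-preserving property of $P$ and its green arrow say nothing about $Q_g$ until they are converted into positional information about the points of $Q_g$. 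The paper performs exactly this conversion. Writing $q_0=f^{-1}(p_0)$ (so $q_0>p_0$, since $q_0$ is green), it applies Theorem~\ref{loops:orbits:connection:1} to the segment $q_0\to p_0\to p_1\to p_2\to p_0$ of the new loop to produce $x\in Q_g$ with $q_0>x$, $p_0>f(x)$, $p_1>f^2(x)$, $p_2>f^3(x)$, $p_0>f^4(x)$; using that $f$ expands the convex hull of $p_0,p_1,p_2$ it gets $x>f^4(x)>f(x)$ on $b_0$, and a count of the points of $Q_g$ lying in $[q_0,p_0]$ and in each $[p_i,a]$ shows these are the three innermost points of $Q_g$ on $b_0$. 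Hence any block structure would place either $x$ and $f^4(x)$, or $f^4(x)$ and $f(x)$, in a common block; in the first case their images $f(x)$ and $f^5(x)$ lie on different branches, in the second case their preimages $x$ and $f^3(x)$ do, contradicting that $f$ maps blocks onto blocks whose convex hulls avoid $a$. This concrete realization-and-counting argument is what your third paragraph is missing.

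There is a second, structural gap: your order of quantifiers. You prove everything for $f$ and then propose to ``shrink $N$ so that the realized cycle exhibits the same pattern as $Q$.'' No result in the paper gives persistence of a pattern under perturbation; Theorem~\ref{loop:maintainance} preserves only the point loop, and the cycle that Theorem~\ref{loops:orbits:connection:1} produces for a nearby $g$ follows the same branch itinerary but need not exhibit the same pattern as $Q$. The paper avoids this by fixing the neighborhood $N$ first and then running the block-structure exclusion above for the cycle $Q_g$ of an arbitrary $g\in N$, using only the persistent loop data (the reference points $q_0,p_0,p_1,p_2$ of $P$ and the surviving inequalities). Your argument should be restructured the same way; as written, the final transfer step does not follow from anything available to you.
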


\begin{proof}

	 (i) Assume first that $P$ is a \emph{slow triod-twist cycle} of a $P$-linear map $f \in \mathcal{R}$. By Theorem \ref{green:p0:image:green}, one of $p_0, p_1$ and $p_2$ is the image of a \emph{green} point. Without loss of generality assume $p_0$ to be the image of a \emph{green} point. Let $\alpha$ be the \emph{fundamental admissible loop} of intervals associated with $P$. We replace $p_0$ in the \emph{point loop} $\alpha$ by $p_0 \to p_1 \to p_2 \to p_0$ to form a new \emph{ point loop} $\beta$ under $f$. Clearly, the \emph{rotation pair} of $\beta$ is $(k+1, n+3)$. By Theorem \ref{loop:maintainance}, there exists a neighborhood $N$ of $f$ such that for each $g \in N$, $g$ has the \emph{point loop} $\beta$. By Theorem \ref{loops:orbits:connection:1}, for each $g \in N$, $g$ has a cycle   $Q_g$ with \emph{rotation pair} $(k+1, n+3)$, \emph{associated} with the \emph{point loop} $\beta$.

	We now show that $Q_g$ has \emph{no block structure}. Let $f^{-1}(p_0) = q_0$. Then, $q_0 > p_0$ by assumption. Consider the segment $q_0 \to p_0 \to p_1 \to p_2 \to p_0$ of the \emph{ point loop} $\beta$. Again, by Theorem \ref{loops:orbits:connection:1},  there exists $x \in Q_g$ such that, $q_0 >x$,  $p_0 >f(x)$,  $p_1 > f^2(x)$, $p_2 > f^3(x)$ and $p_0 > f^4(x)$ (See Figure \ref{loop_k_n}).  Let $A$ be the \emph{convex hull} of the points $p_0,p_1$ and $p_2$. Clearly, $A = [p_0,a] \cup [p_1,a] \cup [p_2,a]$. It is easy to see that $f(A) \supset A$, that is, the fixed point $a$ is \emph{repelling}. So, $f^4(x) > f(x)$. Thus, $x> f^4(x) > f(x)$ in the \emph{branch} $b_0$. 
	
	\begin{figure}[H]
		\caption{Formation of \emph{point loop} of \emph{rotation pair} $(k+1,n+3)$ from the \emph{point loop} of \emph{rotation pair} $(k,n)$ in the case, $\frac{k}{n}< \frac{1}{3}$  (See Theorem \ref{k:n:green:forces:k+1:n+3})}
		\centering
		\includegraphics[width=0.5 \textwidth]{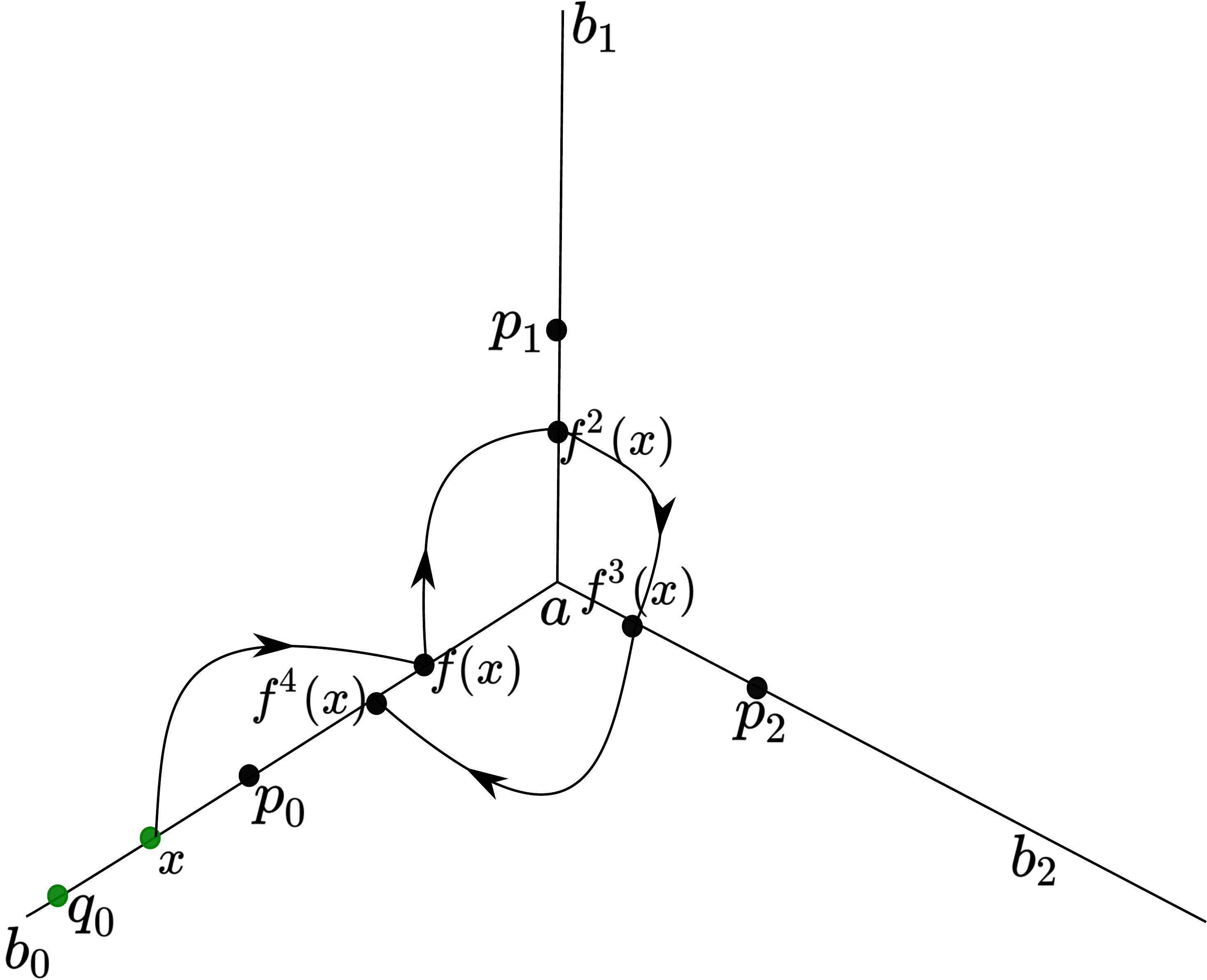}
		\label{loop_k_n}
	\end{figure}

By the construction of the \emph{loop} $\beta$, it follows that each of the intervals $[p_0,a]$, $[p_1,a]$, and $[p_2,a]$ contains exactly two points of $Q_g$, while the interval $[q_0,p_0]$ contains precisely one point of $Q_g$. Consequently, the points $x$, $f^4(x)$, and $f(x)$ form a sequence of three consecutive elements of $Q_g$ lying within the \emph{branch} $b_0$ of $\tau$. In fact, these constitute the first three consecutive points of $Q_g$ in $b_0$, measured in the direction away from the fixed point $a$.

If $Q_g$ were to possess a \emph{block structure}, then either all three points $x$, $f^4(x)$, and $f(x)$, or at least the two points $f^4(x)$ and $f(x)$ that are closest to $a$, would necessarily belong to the same \emph{block}. Suppose first that $x$ and $f^4(x)$ lie in the same block. Then their images under $f$ must also lie in the same block. However, by construction, $f(x)$ and $f^5(x)$ lie on distinct \emph{branches}, yielding a contradiction. 

Alternatively, assume that $f^4(x)$ and $f(x)$ lie in the same block. In that case, their respective pre-images must also belong to the same block. Yet, by construction, $f^3(x)$ and $x$ lie on different branches, leading again to a contradiction. 

Hence, our assumption that $Q_g$ admits a \emph{block structure} is false. Therefore, $Q_g$ has \emph{no block structure}, and the desired conclusion follows.

	(ii) The argument in the case where $P$ is a \emph{fast triod-twist cycle} is entirely analogous, with ``\emph{green}'' replaced by ``\emph{red}.''
	
\end{proof}

To prove the next result, we require the description of all possible  \emph{unimodal slow} and \emph{fast triod-twist patterns}, given in the following two theorems from the paper \cite{BB5} (See Figures \ref{rho_less_third_twist_example} and \ref{rho_greater_third_twist_example}).

\begin{figure}[H]
	\caption{The \emph{unimodal slow twist pattern} $\Gamma_0^{\frac{2}{9}}$}
	\centering
	\includegraphics[width=0.5 \textwidth]{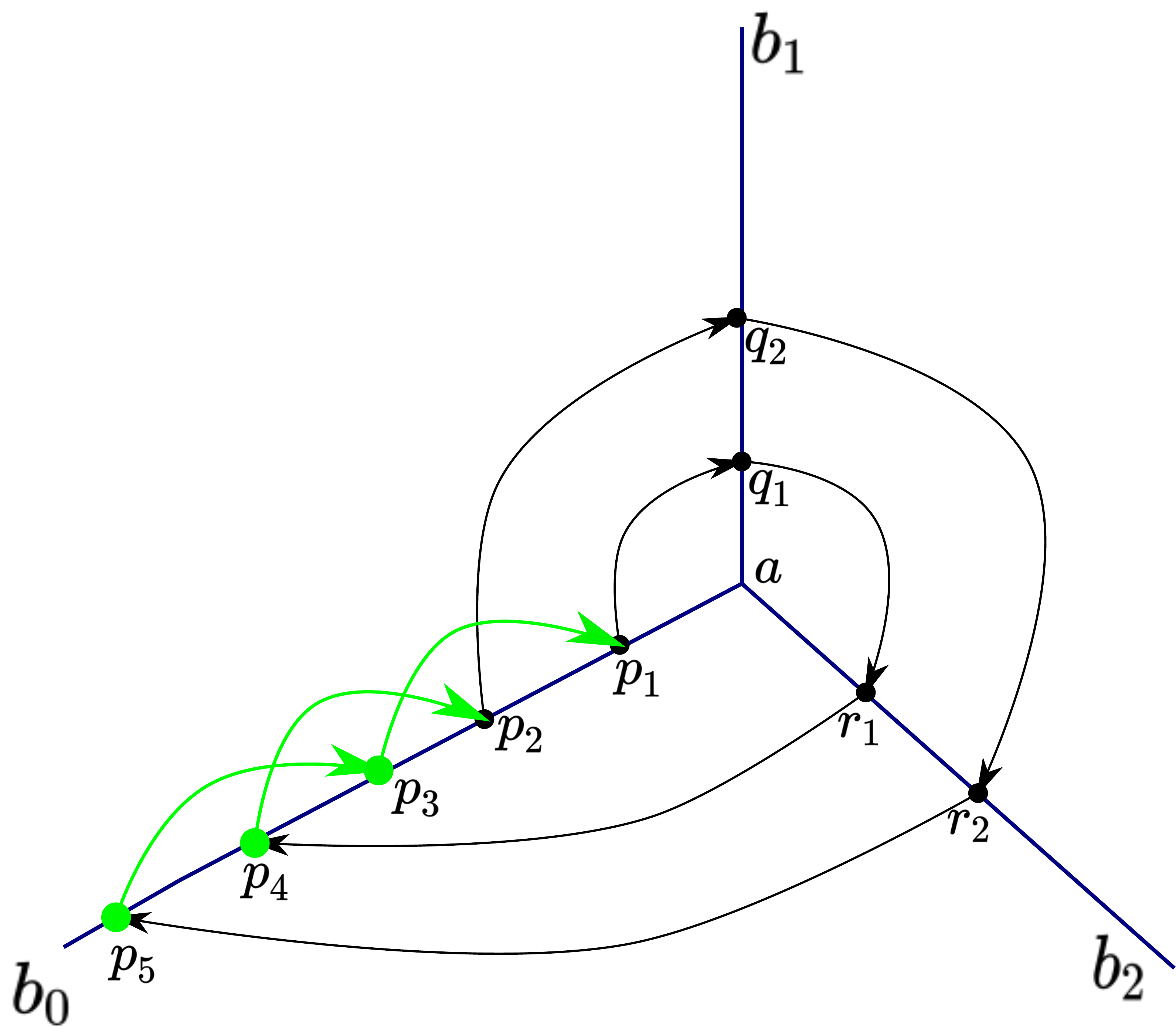}
	\label{rho_less_third_twist_example}
\end{figure}

\begin{theorem}[\cite{BB5}]\label{s:1}
	Let $m, n \in \mathbb{N}$ satisfying $g.c.d(m,n)=1$ and  $\frac{m}{n} < \frac{1}{3}$.  
	Then there exist three distinct unimodal slow triod-twist patterns $\Lambda^{\frac{m}{n}}_k,  k \in \{0,1,2\}$, each with rotation number $\frac{m}{n}$.  If $R_k^{\frac{m}{n}}$ denotes a periodic orbit exhibiting  the pattern $\Lambda^{\frac{m}{n}}_k$, its dynamics can be described as follows (See Figure \ref{rho_less_third_twist_example}):
	
	\begin{enumerate}
		\item On the branch $b_k$, there are $n-2m$ points $p_1, p_2, \dots, p_{n-2m}$,
		numbered away from the central branching point $a$, that is,  $p_{i+1} > p_i$ for all $i \in \{1,2,3, \dots n-2m\}$.  The first $m$ points $p_1, \dots, p_m$ are black, while the remaining $n-3m$ points $p_{m+1} $ $ , \dots, p_{n-2m}$ are green.  The next branch, $b_{k+1}$, contains $m$ black points $q_1, q_2,  $ $ \dots, q_m$ $(q_{j+1} > q_j$ for all $j \in \{1,2,3, \dots m\})$,
		and the third branch, $b_{k+2}$, also contains $m$ black points $r_1, r_2, \dots, r_m$ $(r_{j+1} > r_j$ for all $j \in \{1,2,3, \dots m\})$, each indexed in the direction away from $a$.
		
		\item For indices $i \in \{m+1, \dots, n-2m\}$, we have $f(p_i) = f(p_{i+m})$,
		i.e., the last $n-3m$ green points on $b_k$ are shifted by $m$ positions along the branch. The first $m$ points on $b_k$ map in an order-preserving fashion to the $m$ points of $b_{k+1}$: $f(p_i) = q_i, \quad i = 1, \dots, m$. Each point of $b_{k+1}$ maps in an order-preserving way to the corresponding point on $b_{k+2}$: $f(q_i) = r_i, \quad i = 1, \dots, m$. Finally, the $m$ points of $b_{k+2}$ map back to those of $b_k$ in an order-preserving manner: $f(r_i) = p_i, \quad i = 1, \dots, m$.
	\end{enumerate}
\end{theorem}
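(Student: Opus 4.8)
The plan is to pin down the combinatorial structure of such a pattern purely from the three hypotheses \emph{slow}, \emph{triod-twist} and \emph{unimodal}, and only afterwards to verify that the resulting object carries all the advertised labels. First I would record the rigid constraints supplied by rotation theory. Since the pattern is slow, Theorem~\ref{bifurcation:one:third} eliminates all red points, so every arrow of the fundamental loop is green (displacement $0$) or black (displacement $\tfrac13$). As the rotation number is $\tfrac{m}{n}$ with $\gcd(m,n)=1$, the rotation pair is $(m,n)$ and $d(\Gamma)=m$; summing displacements along the fundamental loop gives $\tfrac13\cdot(\text{number of black arrows})=m$, so there are exactly $3m$ black points and $n-3m$ green points, the latter positive precisely because $\tfrac{m}{n}<\tfrac13$. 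Theorem~\ref{necesary:condition:triod:twist} tells us the cycle is \emph{order-preserving}; Theorem~\ref{black:loop:length:3}(2) tells us every green point $x$ satisfies $x>f(x)$, so green arrows move strictly toward $a$ inside a single branch; and the canonical ordering of Theorem~\ref{canonical:ordering} makes the innermost point of each branch black.

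The heart of the argument is to show that unimodality forces all green points onto one branch as a single outer block. I would argue through the laps of the $P$-linear map $f$: a maximal run of black points on a branch maps monotonically outward onto the next branch, whereas each green point folds the branch back toward $a$, so any branch on which green and black points interleave, or any second branch carrying green points, contributes an additional lap. As a unimodal pattern realizes the minimal modality, the green points must occupy a single contiguous outer block on exactly one branch $b_k$; the three admissible choices $k\in\{0,1,2\}$ are the source of the three patterns $\Lambda^{m/n}_0,\Lambda^{m/n}_1,\Lambda^{m/n}_2$, and since pattern–equivalence fixes branches, these three are genuinely distinct. With $b_k$ fixed, the black points must rotate $b_k\to b_{k+1}\to b_{k+2}\to b_k$ order-preservingly, and a counting argument using that $f$ is a bijection of $P$ forces an even split: the $m$ black images on $b_{k+1}$ can only come from $b_k$, those on $b_{k+2}$ only from $b_{k+1}$, and matching preimages on $b_k$ then yields exactly $m$ black points on each branch. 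Hence on $b_k$ the innermost $m$ points are black and the outer $n-3m$ are green, as claimed, and $f$ is completely determined by order-preservation together with the requirement of forming one cycle; one then checks directly that the black backbone and the inward shift of the green block link into a single cycle of length $n$ with displacement $m$, i.e.\ rotation number $\tfrac{m}{n}$.

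It remains to certify the qualitative labels and completeness. Slowness and regularity are immediate from the rotation number and the explicit color assignment, and unimodality is confirmed by checking that the construction produces exactly one fold (on $b_k$). For the triod-twist property I would use Theorem~\ref{forcing} to reduce the question to the loops of $G_P$: one must show that every loop of rotation number $\tfrac{m}{n}$ carries the pattern of $P$ itself. Since $\gcd(m,n)=1$, Theorem~\ref{block:structure:necessary} already rules out any nontrivial block structure, and a direct inspection of the arrows of $G_P$ shows $\tfrac{m}{n}$ is realized only by the fundamental loop. Finally, completeness follows by rerunning the structural analysis in reverse: the constraints above determine the pattern uniquely once the green branch $b_k$ is fixed, so the three listed patterns are the only unimodal slow triod-twist patterns of rotation number $\tfrac{m}{n}$.

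The step I expect to be the main obstacle is the unimodality reduction, namely turning ``minimal modality'' into the precise statement that any placement of the green points other than a single outer block on one branch strictly increases the number of laps; this demands a careful lap-by-lap analysis of the $P$-linear map combined with order-preservation, and it is where the combinatorics is most delicate. Verifying the twist property is a close second, since it requires controlling \emph{all} loops of $G_P$ of rotation number $\tfrac{m}{n}$ rather than merely the fundamental one.
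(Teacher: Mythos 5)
A preliminary remark: this paper never proves Theorem~\ref{s:1} at all; it is quoted from \cite{BB5} as an external result, so there is no in-paper proof to measure your argument against, and your proposal has to stand on its own. Much of its skeleton does stand: the displacement count giving exactly $3m$ black and $n-3m$ green points, the use of order-preservation (Theorem~\ref{necesary:condition:triod:twist}), the fact that green points satisfy $x>f(x)$ (Theorem~\ref{black:loop:length:3}(2)), the canonical ordering, the bijection count forcing exactly $m$ black points onto each branch, and the distinctness of the three patterns because equivalence fixes branches are all sound.

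However, the two steps that actually carry the theorem are not done, and one of them is mis-sketched. (a) Your verification of the twist property fails as written: coprimality of $(m,n)$ plus Theorem~\ref{block:structure:necessary} rules out a block structure, i.e.\ gives \emph{exactness}, but exactness is far weaker than the twist property --- an exact pattern whose rotation number lies in the interior of its forced rotation interval forces infinitely many other patterns with the same rotation number, so it is not a twist. What must be proved is that no loop of $G_P$ other than the fundamental loop (and its repetitions) has rotation number $\tfrac{m}{n}$, and ``direct inspection'' is not an argument for general $(m,n)$; the tool \cite{BB5} uses for exactly this purpose is the code-function characterization of triod-twist patterns (strictly increasing code), which your proposal never invokes. (b) The unimodality reduction --- greens must form a single outer block on one branch --- is only announced, and the heuristic offered for it is false: a contiguous block of green points shifted inward by $m$ is monotone, so it is not true that ``each green point folds the branch back toward $a$''; folds occur precisely at black/green interfaces, so the needed lemma is an interface count (your conclusion is right, but the justification is wrong, and neither you nor this paper even defines \emph{unimodal}). (c) The closing check that the arrows ``link into a single cycle of length $n$'' is glossed over, and here it genuinely matters: the formulas as transcribed in the statement ($f(p_i)=f(p_{i+m})$ and $f(r_i)=p_i$) cannot hold literally, since the first violates injectivity of $f|_P$ and the second produces $3$-cycles; a correct proof must exhibit the actual permutation (greens shift inward by $m$, and the points of $b_{k+2}$ return to the \emph{outer} positions of $b_k$, as in Figure~\ref{rho_less_third_twist_example}) and verify it is one $n$-cycle of displacement $m$. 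Because you never write this permutation down, your argument could neither certify the stated description nor catch its errors.
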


\begin{figure}[H]
	\caption{The \emph{unimodal fast twist pattern} $\Delta_0^{\frac{2}{5}}$}
	\centering
	\includegraphics[width=0.5 \textwidth]{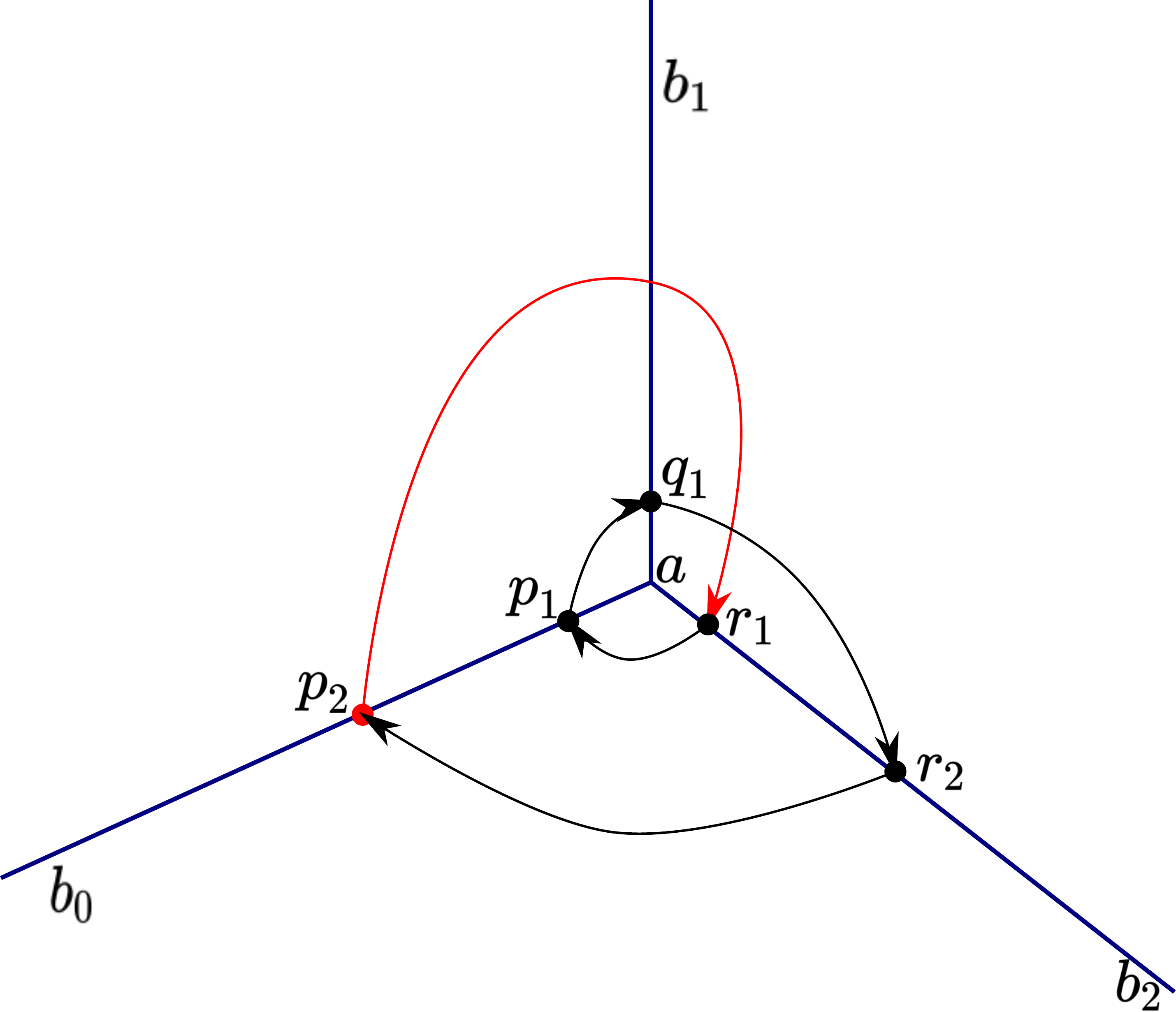}
	\label{rho_greater_third_twist_example}
\end{figure}

\begin{theorem}[\cite{BB5}]\label{s:2}
	Let $u,v \in \mathbb{N}$ with $g.c.d(u,v)=1$ and  $\frac{u}{v} > \tfrac{1}{3}$.  
	Then there exist three distinct unimodal fast triod-twist patterns $\Psi_k^{\tfrac{u}{v}}, k \in \{0,1,2\}$, each with rotation number $\tfrac{u}{v}$.  If $S_k^{\tfrac{u}{v}}$ denotes a periodic orbit exhibiting the pattern $\Psi_k^{\tfrac{u}{v}}$, the dynamics are as follows (See Figure \ref{rho_greater_third_twist_example}):
	
	\begin{enumerate}
		\item The branch $b_k$ contains $u$ points	$p_1, p_2, \dots, p_u$, numbered away from $a$.  
		The first $v - 2u$ points $p_1, \dots, p_{v-2u}$ are black, and the remaining $3u - v$ points $p_{v-2u+1}, \dots, p_u$ are red. The branch $b_{k+1}$ consists of $v - 2u$ black points $q_1, q_2, \dots, q_{v-2u}$.  The branch $b_{k+2}$ contains $u$ black points $r_1, r_2, \dots, r_u$, each indexed in the direction away from $a$.
		
		\item The last $3u - v$ red points on $b_k$ map to the first $3u - v$ points of $b_{k+2}$: $f(p_{v-2u+i}) = r_i, i = 1, \dots, 3u - v$, in an order-preserving manner. The first $v - 2u$ black points on $b_k$ map to the $v - 2u$ black points on $b_{k+1}$: $f(p_i) = q_i, \quad i = 1, \dots, v - 2u$.  The $v - 2u$ black points on $b_{k+1}$ map to the last $v - 2u$ points on $b_{k+2}$: $f(q_i) = r_{3u-v+i},  i = 1, \dots, v - 2u$, again preserving order.  Finally, each point on $b_{k+2}$ maps back to its corresponding point on $b_k$ in an order-preserving way: $f(r_i) = p_i,  i = 1, \dots, u$.
	\end{enumerate}
\end{theorem}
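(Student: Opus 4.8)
The plan is to treat existence and uniqueness separately, with everything organized around the fact that a triod-twist cycle is combinatorially a rigid rotation. I would first extract the data forced by the hypotheses. Since the rotation number $u/v$ is in lowest terms and the pattern is a twist (hence the simplest realization of its rotation number, so of period equal to the denominator), any representative cycle $P$ has exactly $v$ points and total displacement $u$. By Theorem~\ref{bifurcation:one:third}(2) a fast twist has no green points, so every arrow is black ($d=\tfrac13$) or red ($d=\tfrac23$); writing $b,r$ for the numbers of black and red points, the equations $b+r=v$ and $\tfrac13 b+\tfrac23 r=u$ give $r=3u-v$ and $b=2v-3u$. Positivity of these counts gives $\tfrac13<\tfrac{u}{v}<\tfrac23$, and regularity (which forbids the rotation number $\tfrac12$ carried by primitive period-two patterns) together with the requirement that no branch be empty (Theorem~\ref{all:branches}) sharpens this to the admissible range $\tfrac13<\tfrac{u}{v}<\tfrac12$, which I would record at the start.

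For existence I would write down the configuration in the statement and verify four points: that the prescribed assignments define a single $v$-cycle (traced using $\gcd(u,v)=1$); that its rotation pair is $(u,v)$ (the displacement sum over the $2v-3u$ black and $3u-v$ red points returns $u$, as above); that it is order-preserving in the sense of Definition~\ref{green} and unimodal, the single fold occurring at the junction between the block of black points and the block of red points on $b_k$ where the image jumps from $b_{k+1}$ to $b_{k+2}$; and that it is a genuine twist, i.e.\ forces no other pattern of rotation number $u/v$, which I would obtain from order-preservation together with the fact that its oriented graph $G_P$ carries a unique elementary loop of rotation number $u/v$. The three patterns $\Psi_k^{u/v}$ then come from the three cyclic choices of the distinguished branch $b_k$, and they are pairwise distinct because the equivalence defining a pattern fixes each branch of $\tau$.

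For uniqueness, let $\pi$ be any unimodal fast twist pattern of rotation number $u/v$, with representative cycle $P$. The setup paragraph already forces $P$ to have $v$ points, of which $3u-v$ are red and $2v-3u$ are black, and Theorem~\ref{necesary:condition:triod:twist} makes $P$ order-preserving, while Theorem~\ref{canonical:ordering} lets me canonically order the branches so that the innermost points $p_0,p_1,p_2$ are black. The heart of the proof is to show that order-preservation and unimodality together force all $3u-v$ red points to be the outermost points of one single branch, forming one contiguous block, and then pin down the mapping. I would carry this out by transporting the cyclic order on $P$ to the circle: order-preservation makes $f$ act as an orientation-preserving, degree-one, monotone circle map of rotation number $u/v$, so its $v$-point orbit carries the rigid-rotation (balanced) combinatorics, which is unique; the branch-boundaries (copies of $a$) then determine the red points as exactly those orbit points that cross two boundaries in one step, and unimodality fixes the single admissible alignment of the boundaries with the orbit. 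Matching this against the three rotational normalizations produces exactly $\Psi_0^{u/v},\Psi_1^{u/v},\Psi_2^{u/v}$.

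The main obstacle I anticipate is this uniqueness step — specifically, proving that unimodality forces the red points to occupy the outer positions of a single branch in one block, rather than being scattered across branches or split into several blocks, since any such alternative still respects the black/red counts and order-preservation but would create a second fold. Making the passage to the rigid-rotation model precise on the triod (where the three branches play the role of three arcs meeting at the single marked point $a$) and correctly translating ``unimodal'' into the combinatorial language is where the real work lies; verifying that the constructed pattern is genuinely a twist is a secondary difficulty, handled through the elementary-loop description of $L(G_P)$. Apart from these points, the argument runs in close parallel to the slow case, Theorem~\ref{s:1}.
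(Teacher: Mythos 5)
This statement is not proved in the paper at all: it is quoted from \cite{BB5} (alongside its slow counterpart, Theorem \ref{s:1}) and used as a black box in the proof of Theorem \ref{9k+3:6k+2}. So there is no in-paper argument to compare your attempt with, and I can only judge it on its own merits. Much of your bookkeeping is correct. A twist must indeed have period $v$: by Theorem \ref{result:1}, a pattern with modified rotation pair $(u/v,k)$, $k\geq 2$, forces the pair $(u/v,1)$, i.e.\ a distinct pattern with the same rotation number. The counts $r=3u-v$, $b=2v-3u$ follow from Theorem \ref{bifurcation:one:third}; the configuration in the statement closes up into a single $v$-cycle because the first-return map on $b_k$ is $i\mapsto i+(3u-v)\bmod u$ and $\gcd(3u-v,u)=\gcd(u,v)=1$; it is order-preserving with exactly one fold (at $p_{v-2u}$); and the three values of $k$ give genuinely distinct patterns because pattern equivalence fixes branches.

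Two of your steps are genuine gaps, however. (a) Twist-ness of the constructed cycle: order-preservation is only a necessary condition (Theorem \ref{necesary:condition:triod:twist}), and ``a unique elementary loop of rotation number $u/v$'' does not finish the job. A point loop of rotation number $u/v$ in $G_P$ need only be a combination of elementary loops whose rotation numbers \emph{average} to $u/v$, so you must first prove that $u/v$ is an endpoint of $L(G_P)$; even then you must rule out that a cycle tracking a $k$-fold repetition of the fundamental loop, $k\geq 2$, realizes a new period-$kv$ pattern with the same rotation number. This is the heart of the theorem (in \cite{BB5} it is handled with the code-function characterization of triod twists), and your outline leaves it unproven. (b) Uniqueness: your reduction to a degree-one monotone circle map, and the claim that unimodality forces the red points to form one contiguous outermost block on a single branch, are exactly what has to be proven; you flag this yourself, so as it stands this half is a plan, not a proof. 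Finally, a smaller imprecision: the bound $u/v<1/2$ does not follow from ``regularity forbids $1/2$'' plus nonempty branches. You need Theorem \ref{result:1} together with the fact that every regular pattern forces the black primitive period-$3$ pattern (rotation number $1/3$): if $\rho\geq 1/2$, convexity of the forced set of modified rotation pairs would place $(1/2,1)$ — a primitive period-$2$ pattern — among the forced patterns, contradicting regularity.
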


We now prove an important result that will be utilized in Section~\ref{forcing:section}.

\begin{theorem}\label{9k+3:6k+2}\label{9k-3:6k-2}
	
Let $P$ be an exact cycle of  a $P$-linear map $f \in \mathcal{R}$. Then, 
	\begin{enumerate}
		\item  if $P$ is slow and has period $9k+3$, for some $k \in \mathbb{N}$, then there exists a neighborhood $N$ of $f$ such that, for every $g \in N$, the map $g$ has a slow exact cycle $Q_g$ of period $6k+2$.
		\item if $P$ is fast and has  period $9k-3$,  for some $k \in \mathbb{N}$, then there exists a neighborhood $N$ of $f$ such that, for every $g \in N$, the map $g$ has a fast exact cycle $Q_g$ of period $6k-2$.
	\end{enumerate}

\end{theorem}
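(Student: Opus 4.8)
Write the rotation pair of $P$ as $(d,9k+3)$. Slowness means $d/(9k+3)<\tfrac13$, hence $d\le 3k$ and $\rho(P)\le k/(3k+1)<\tfrac13$. By Theorem~\ref{all:branches}, $P$ forces a primitive cycle of period $3$, whose rotation number is $\tfrac13$, so $\tfrac13\in L(G_P)$; since $L(G_P)$ is an interval containing both $\rho(P)$ and $\tfrac13$, it contains $k/(3k+1)$. As $\gcd(k,3k+1)=1$, the simplest pattern with this rotation number is the slow twist $T$ of rotation pair $(k,3k+1)$ and period $3k+1$, which is exact by Theorem~\ref{block:structure:necessary}. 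Because $k/(3k+1)\in L(G_P)$ and $1\in Sh(m_i)$ always, the convex–hull description of Theorem~\ref{result:1} gives that $P$ forces $T$; by Theorem~\ref{forcing} the $P$-linear map $f$ then carries a representative cycle $R$ of $T$, whose explicit dynamics (a unique green point $g$ on $b_0$, with black points cycling through the three branches) are supplied by Theorem~\ref{s:1}.

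\textbf{Step 2: constructing the loop.} The target has rotation pair $(2k,6k+2)=(k,3k+1)+(k,3k+1)$, i.e.\ modified rotation pair $(k/(3k+1),2)$, rotation number $k/(3k+1)<\tfrac13$, and period $6k+2$. Using the green point $g$, which satisfies $g>f(g)$ by Theorem~\ref{no:insider} and therefore re-enters the branch $b_0$, I would form a point loop $\beta$ of length $6k+2$ and displacement $2k$ by traversing the fundamental loop of $R$ once and then, upon returning to $g$, routing through a second circuit that exploits the extra arrows available in $G_P$ (the graph is strongly connected and rich because $P$ is exact, cf.\ Lemma~\ref{basic:eventually:meeting:P}) so as \emph{not} to retrace the first circuit exactly. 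The strict inequalities furnished by $g>f(g)$ and by the arrows of $R$ guarantee that $\beta$ is a genuine point loop for $f$, and by Theorem~\ref{loops:orbits:connection:1} it is realized by a cycle of rotation pair $(2k,6k+2)$, hence slow of period $6k+2$.

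\textbf{Step 3 (the crux): exactness.} Here the coprimality criterion of Theorem~\ref{block:structure:necessary} is unavailable, since $\gcd(2k,6k+2)=2$; the only possible block structure has two points per block, namely the doubling of the twist $T$. I would exclude it exactly as in the proof of Theorem~\ref{k:n:green:forces:k+1:n+3}: the splice at the green point produces three consecutive points of the realized cycle on the branch $b_0$ whose images and pre-images land on pairwise distinct branches, so they cannot be consistently grouped into two-point blocks. Any attempted pairing forces two points whose images lie on different branches into a common block, contradicting that $f$ must map blocks to blocks. This is the step that requires care, because one must arrange the second circuit so that the two-point (doubling) structure is provably destroyed \emph{while} the rotation pair is kept equal to $(2k,6k+2)$. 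Granting this, the realized cycle has no block structure and is therefore exact by Theorem~\ref{connection:mixing:no:block:structure}.

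\textbf{Step 4: the neighborhood, and the fast case.} The loop $\beta$ is described by finitely many strict inequalities $x_i>z_i$ and $f(z_i)>x_{i+1}$, so by Theorem~\ref{loop:maintainance} there is a neighborhood $N$ of $f$ in $\mathcal{R}$ throughout which $\beta$ persists as a point loop; by Theorem~\ref{loops:orbits:connection:1} each $g\in N$ has a cycle $Q_g$ associated with $\beta$, of rotation pair $(2k,6k+2)$ and hence slow of period $6k+2$. Since the branch placements invoked in Step~3 are encoded in $\beta$ and therefore persist, $Q_g$ again has no block structure and is exact, which yields (1). Part~(2) is the mirror image: replace $k/(3k+1)$ by $k/(3k-1)>\tfrac13$, the green point by the unique red point, Theorem~\ref{s:1} by Theorem~\ref{s:2}, and note that the forced primitive period-$3$ cycle again places $\tfrac13$ in $L(G_P)$, now on the side of $\rho(P)$ opposite to $k/(3k-1)$; the construction and the block-structure exclusion go through verbatim with $(2k,6k-2)=(k,3k-1)+(k,3k-1)$.
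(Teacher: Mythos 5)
Your Steps 1 and 4 are sound and essentially match the paper: forcing a cycle of rotation pair $(k,3k+1)$ via Theorem~\ref{result:1}, and persistence via Theorems~\ref{loop:maintainance} and~\ref{loops:orbits:connection:1}. The genuine gap is exactly where you flag it, in Steps 2--3, and it is not a gap that can be waved through. You never construct the ``second circuit'': you only postulate that $G_P$ has enough extra arrows to close a loop of length $3k+1$ and displacement $k$ through $g$ that is distinct from the fundamental circuit of $R$, and you never use the hypothesis that $P$ is \emph{exact} in any checkable way (Lemma~\ref{basic:eventually:meeting:P} gives eventual covering by intervals, not arrows meeting a prescribed length-and-displacement budget). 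Since $\gcd(2k,6k+2)=2$, Theorem~\ref{block:structure:necessary} is indeed unavailable, so everything rests on the concrete shape of $\beta$: Theorem~\ref{loops:orbits:connection:1} only yields a periodic point whose itinerary shadows $\beta$ branch by branch, and if $\beta$ even partially retraces the loop of $R$, the realized cycle can degenerate to $R$ itself (period $3k+1$) or to a doubling of $R$ with block structure. The exclusion argument you import from Theorem~\ref{k:n:green:forces:k+1:n+3} worked there because the loop was explicitly built by inserting a black $3$-loop at the image of a green point, which produced a verifiable configuration of three consecutive points with prescribed image/preimage branches; with $\beta$ unspecified there is nothing to verify, as you yourself concede (``Granting this'').

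The paper closes precisely this gap by a different mechanism, split into two cases. If $\rho(P)<3k/(9k+3)$, it forces a twist of rotation pair $(2k-1,6k-1)$ and invokes Theorem~\ref{k:n:green:forces:k+1:n+3} directly, so no new surgery is needed. In the remaining case $\rho(P)=k/(3k+1)$, exactness of $P$ enters essentially: by Theorem~\ref{connection:mixing:no:block:structure}, $P$ has no block structure over $R$, so Theorem~\ref{add:4} produces a cycle $Q$ of $f$ with rotation pair $(2k,6k+2)$ having block structure over $R$ --- the very doubling you want to avoid --- and this $Q$ is then used as scaffolding rather than merely excluded. Theorem~\ref{add:3} together with the unimodal description of $R$ (Theorem~\ref{s:1}) gives a point $x_0\in P$ shadowing $Q$ with $x_0>f^{3k+1}(x_0)$ and, by monotonicity of $f$ on the green block, $f(x_0)>f^{3k+2}(x_0)$; this inequality allows the fundamental interval loop of $P$ (length $9k+3=3(3k+1)$) to be cut into three segments of length $3k+1$ and two of them amalgamated into an explicit admissible loop $\Delta$ of length $6k+2$, whose associated cycle $S$ (Theorem~\ref{theorem:interval:graph}) has rotation pair $(2k,6k+2)$ and demonstrably no block structure. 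In other words, the ``extra arrows'' you hoped to find are manufactured from $P$'s own fundamental loop via the forced doubling; your outline is missing this (or any) mechanism, and without it the central claim of the theorem is not proved.
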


\begin{proof}

(1)	\quad Assume first that $P$ is an \emph{exact slow} cycle of period $9k+3$ for a $P$-\emph{linear} map $f \in \mathcal{R}$. Since the largest fraction with denominator $9k+3$, that is strictly less than $\tfrac{1}{3}$ is $\tfrac{3k}{9k+3}$, it follows from Theorem \ref{result:1} that the \emph{rotation pair} of $P$ must be $(\rho,9k+3)$ with $\rho \leqslant 3k$.  We consider two cases here:

	Case 1: $\rho < 3k$. \quad  In this case,  by Theorem~\ref{result:1}, the map $f$ has  a \emph{slow} cycle $P$ with \emph{rotation pair} $(3k-1,9k+3)$. Note that  for $k \geqslant 1,  \frac{3k-1}{9k+3} < \frac{2k-1}{6k-1} < \tfrac{1}{3}$. 	Hence, again by Theorem~\ref{result:1}, $f$ possesses a \emph{slow} cycle $Q$ with rotation pair $(2k-1,6k-1)$. Applying Theorem~\ref{k:n:green:forces:k+1:n+3}, we get that  there exists a neighborhood $N$ of $f$ such that every $g \in N$ has  an \emph{exact slow} cycle $R$ with \emph{rotation pair} $(2k,6k+2)$ and hence the result follows. 
		
	Case 2: $\rho = 3k$. \quad  In this case, the  \emph{modified rotation pair} of $P$ is  $\left(\tfrac{k}{3k+1},3\right)$ (See Figure \ref{mixing_slow_3k_9k_plus_3}). By Theorem~\ref{result:1}, the map $f$ has a cycle $R$ with \emph{modified rotation pair}  $\left(\tfrac{k}{3k+1},1 \right)$  and hence \emph{rotation pair} $(k,3k+1)$. Since $P$ is \emph{exact} , Theorem~\ref{connection:mixing:no:block:structure} implies that $P$ cannot possess a \emph{block structure} over $R$. Consequently, by Theorem~\ref{add:4}, $f$ has a cycle $Q$ with \emph{rotation pair} $(2k,6k+2)$, which has a \emph{block structure} over $R$.

		Now,  $R$ consists of a single \emph{green} point together with $3k$ \emph{black} points. Hence, $R$ is \emph{unimodal}, and its dynamics is governed by Theorem~\ref{s:1}. Since $Q$ has a \emph{block structure} over $R$, it necessarily contains $2$ \emph{green} points and $6k$ \emph{black} points (See Figure \ref{mixing_slow_3k_9k_plus_3} for the case $k=1$). The dynamics of $Q$ can likewise be determined directly from Theorem~\ref{s:1}. The restriction $f|_Q$ is \emph{monotone} on each \emph{block}, except for exactly one. For ease of exposition, we assume that $f|_Q$ is \emph{monotone} on the \emph{block} containing the \emph{green} points; the remaining cases can be treated by analogous arguments.

		Let $y_0$ denote the point of $Q$ that lies farthest from $a$ within the \emph{branch} of $\tau$ containing the two \emph{green} points of $Q$. By Theorem~\ref{s:1} and the definition of \emph{block structure}, we have $f^{3k+1}(y_0)$ equal to the second \emph{green} point of $Q$, which implies that $y_0 > f^{3k+1}(y_0)$. Moreover, the points $f^i(y_0)$ for $i \in {1,2,\dots,3k}$ and $i \in {3k+2,\dots,6k+1}$ are all \emph{black}. Since $P$ \emph{forces}  $Q$, Theorem~\ref{add:3} guarantees the existence of a point $x_0 \in P$ such that $x_0 > y_0$, and $f^i(x_0) > f^i(y_0)$ for $i \in \{0,1,2, \dots 6k+1\}$, and furthermore, $f^{6k+2}(x_0)  > y_0 = f^{6k+2}(y_0)$.

		Now, let $\Gamma$ denote the \emph{fundamental loop} of \emph{intervals associated} with $P$. Partition $\Gamma$ into three consecutive \emph{segments}: $\gamma_1 : [x_0,a] \to [f(x_0), $ $ a] \to \dots \to [f^{3k}(x_0),a], $ $ \gamma_2 : [f^{3k+1}(x_0),a] \to [f^{3k+2}(x_0),a] \to \dots \to $ $ [f^{6k+1}(x_0),a], $ and $\gamma_3 : [f^{6k+2}(x_0),a] \to [f^{6k+3}(x_0),a] \to \dots \to [f^{9k+2}(x_0),$ $ a]$.  Each \emph{segment} has length $3k+1$.  To form a \emph{exact}  cycle of the desired \emph{rotation pair}, we amalgamate $\gamma_3$ with $\gamma_2$.  Observe that $x_0$ and $f^{3k+1}(x_0)$ are the \emph{green} points of $P$, with $x_0 > f^{3k+1}(x_0)$. Since $f$ is monotone on their \emph{block}, we have $f(x_0) > f^{3k+2}(x_0)$. This allows us to construct the \emph{loop} of \emph{intervals}: $\Delta : [x_0,a] \to [f^{3k+2}(x_0),a] \to \dots \to [f^{9k+2}(x_0),a] \to [x_0,a]$, fusing $\gamma_2$ and $\gamma_3$ together.  
		
		Let $S$ be the cycle corresponding to $\Delta$, as guaranteed by Theorem \ref{theorem:interval:graph}. By construction,  $S$ has \emph{rotation pair} $(2k, 6k+2)$ and no \emph{block structure} and hence by Theorem \ref{connection:mixing:no:block:structure},  $P$ is an \emph{exact}  cycle. By Theorem \ref{loops:orbits:connection:1}, let $\delta$ denote the \emph{point loop} associated with $S$. It follows from Theorem \ref{loop:maintainance} that there exists a neighborhood $N'$ of $f$ such that, for each $g \in N'$, the map $g$ has the \emph{point loop} $\delta$. Finally, Theorem \ref{loops:orbits:connection:1} ensures that, for each such $g$, there exists an \emph{exact slow} cycle $Q_g$ with \emph{rotation pair} $(2k, 6k+2)$ \emph{corresponding} to $\delta$. This completes the proof.

	(2) \quad The argument for part (2) proceeds analogously, with the roles of ``\emph{green}" replaced by ``\emph{red}" (together with the corresponding modifications), and is therefore left to the reader. 
	
		\begin{figure}[H]
		\caption{An \emph{exact slow} cycle of \emph{rotation pair} $(3k, 9k+3)$ with $k=1$}
		\centering
		\includegraphics[width=0.6 \textwidth]{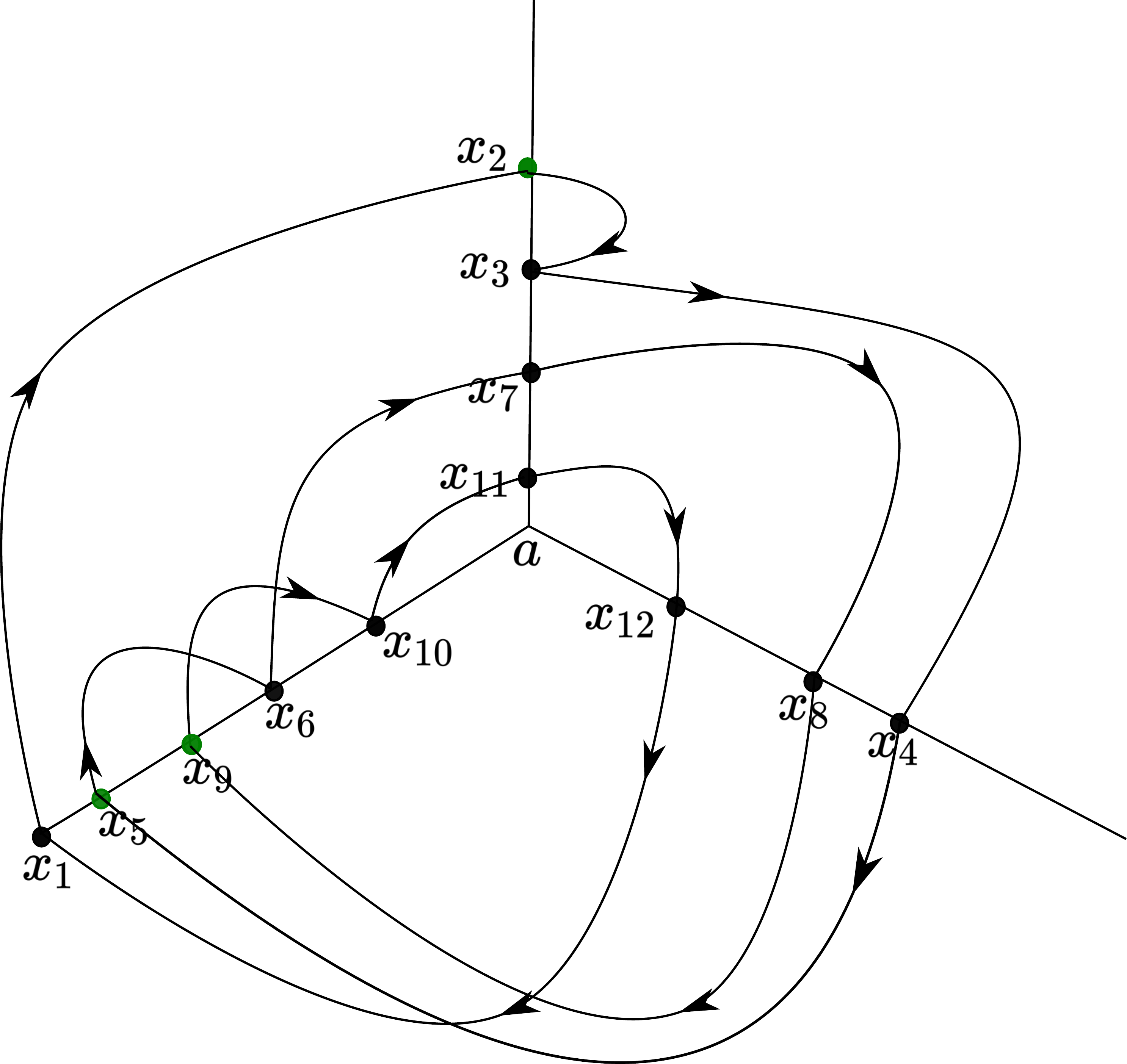}
		\label{mixing_slow_3k_9k_plus_3}
	\end{figure}

\end{proof}

\section{Forcing among regular exact patterns}\label{forcing:section}

We now apply the results established in Sections \ref{characterization:section} and \ref{section:properties:mixing} to address the problem of \emph{co-existence} among periods of \emph{exact regular patterns} on the \emph{triod} $\tau$.

\subsection{Forcing among exact slow  patterns}

We begin with \emph{exact slow patterns}. We introduce the following \emph{ordering}.

\begin{definition}\label{green:ordering}

	Let $(\mathbb{N}_{>3}, \gg_s)$ denote the partially ordered set of natural numbers greater than three,  where the order relation $\gg_s$  is defined as follows:  $6 \;\gg_s\;  5 \;\gg_s\;  11 \;\gg_s\; 17 \;\gg_s\;  23 \;\gg_s\; 29\;\gg_s\;  35\;\gg_s\;  41\;\gg_s\; 47\;\gg_s\; 53\;\gg_s\; 9\;\gg_s\; 18\;\gg_s\; 27\;\gg_s\; 36\;\gg_s\; 45\;\gg_s\; 12\;\gg_s\; 21\;\gg_s\; 30\;\gg_s\; 39\;\gg_s\; 48  \;\gg_s\; \dots$. 	The ordering begins with $6$, and thereafter the numbers are arranged according to order: 	$6k-1 \;\gg_s\; 9k \;\gg_s\; 9k+3 \;\gg_s\; 6k+2 \;\gg_s\; 3k+1 \;\gg_s\; 9k+6, k \in \mathbb{N}$. We will call the ordering $\gg_s$,  the \emph{slow ordering} of $\mathbb{N}_{>3}$. 
	
\smallskip
For each $m \in \mathbb{N}_{>3}$, define
\[
\mathcal{M}_s(m) = \{ n \in \mathbb{N}_{>3} : n \gg_s m \} \cup \{ m \}.
\]

\end{definition}

\begin{theorem}\label{forcing:mixing:green:main:theorem}
	Let $f \in \mathcal{R}$, and let $\mathcal{P}_s(f)$ denote the set of all periods associated with exact slow cycles of $f$. 
	
	If $m, n \in \mathbb{N}_{>3}$ satisfy $m \gg_s n$ and $m \in \mathcal{P}_s(f)$, then there exists a neighborhood $N$ of $f$ in $\mathcal{R}$ such that for every $g \in N$, one has $n \in \mathcal{P}_s(g)$.
	
	Consequently, there exists an integer $m \in \mathbb{N}_{>3}$ for which $\mathcal{P}_s(f) = \mathcal{M}_s(m)$,
	and for all $g \in \mathcal{R}$ sufficiently close to $f$, $\mathcal{M}_s(m) \subseteq \mathcal{P}_s(g)$.
\end{theorem}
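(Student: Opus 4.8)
The plan is to reduce the forcing assertion to the covering relations of the order $\gg_s$, propagate them by transitivity of forcing, and then read off the description of $\mathcal{P}_s(f)$ from the fact that it is closed under forcing. Two preliminary observations organize everything. First, $\gg_s$ really is a linear order: a residue count shows that every $n>3$ occurs exactly once among the families $6k-1,\,9k,\,9k+3,\,6k+2,\,3k+1,\,9k+6$, so the six-term chain $6k-1\gg_s 9k\gg_s 9k+3\gg_s 6k+2\gg_s 3k+1\gg_s 9k+6$, the link $9k+6\gg_s 6(k+1)-1$, and the initial term $6$ together enumerate $\mathbb{N}_{>3}$ as a sequence of order type $\omega$. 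Second, attaching to each period its largest slow rotation number — successively $(2k-1)/(6k-1)$, $(3k-1)/(9k)$, $k/(3k+1)$, $k/(3k+1)$, $k/(3k+1)$, $(3k+1)/(9k+6)$ along the chain — a short cross-multiplication shows these values increase weakly to $\tfrac13$, coinciding exactly on the triples $9k+3,6k+2,3k+1$, which sit on one prong over $\rho=k/(3k+1)$. Thus $\gg_s$ is precisely the order ``increasing rotation number, then Sharkovsky-descent on each prong'', which is the forcing governed by Theorem~\ref{result:1}.

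I would first settle the covering steps whose target carries a coprime rotation pair: $6\gg_s 5$, $6k-1\gg_s 9k$, $6k+2\gg_s 3k+1$, $3k+1\gg_s 9k+6$, and the cross-chain $9k+6\gg_s 6(k+1)-1$. Fix any exact slow cycle $P$ of the source period. By Theorem~\ref{result:1} its rotation number is at most the largest slow fraction with that denominator, so the target's designated rotation number $\rho_n$ falls strictly inside the forced interval $[\rho_P,\tfrac13]$; the only exception is $6k+2\gg_s 3k+1$, where $\rho_n=k/(3k+1)$ is the shared prong value and is reached by the Sharkovsky step from $2$ to $1$ (as $1\in Sh(2)$). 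In either case the modified rotation pair $(\rho_n,1)$ is forced, its representative is a period-$n$ cycle with coprime rotation pair, hence exact by Theorem~\ref{block:structure:necessary}; stability follows by applying Theorem~\ref{loop:maintainance} to the fundamental point loop of that representative.

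The two remaining covering steps, $9k\gg_s 9k+3$ and $9k+3\gg_s 6k+2$, have non-coprime targets, and here exactness must be built rather than inherited — this is the crux, although it is largely discharged already in Section~\ref{section:properties:mixing}. The step $9k+3\gg_s 6k+2$ is exactly Theorem~\ref{9k+3:6k+2}. For $9k\gg_s 9k+3$ I would route through a twist: an exact slow cycle of period $9k$ forces, by Theorem~\ref{result:1}, a slow twist cycle of rotation pair $(3k-1,9k)$ (such twists exist by Theorem~\ref{s:1} and are forced as the $m=1$ element over $\rho=(3k-1)/(9k)$), and feeding this into Theorem~\ref{k:n:green:forces:k+1:n+3} yields a cycle of rotation pair $(3k,9k+3)$, i.e. of period $9k+3$, whose defining loop was built to contain three consecutive points on a single branch that no block can respect, so Theorem~\ref{connection:mixing:no:block:structure} certifies exactness. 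The genuine difficulty, and the step I expect to absorb most of the work, is precisely this no-block-structure verification for the non-coprime periods $9k+3$ and $6k+2$: a priori such cycles could carry a block structure over the period-$(3k+1)$ cycle, and only the explicit loop surgery of Theorems~\ref{k:n:green:forces:k+1:n+3} and~\ref{9k+3:6k+2}, read through the dichotomy of Theorem~\ref{connection:mixing:no:block:structure}, rules this out.

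It remains to assemble the pieces. Transitivity of forcing promotes the covering relations to the stated implication: if $m\gg_s n$ and $m\in\mathcal{P}_s(f)$ then (applying the covering relations along the finite $\gg_s$-path from $m$ to $n$, each neighborhood containing $f$) the map $f$ itself possesses an exact slow cycle of period $n$, and Theorem~\ref{loop:maintainance} applied to the point loop of that cycle yields the neighborhood for this single $n$. Since $\gg_s$ is linear and $\mathcal{P}_s(f)$ contains, together with any of its periods, every $\gg_s$-smaller period it forces, it is a down-set for $\gg_s$; because $\gg_s$ has order type $\omega$, a nonempty down-set has a $\gg_s$-greatest element $m$ — the smallest rotation number realized by an exact slow cycle of $f$ — so that $\mathcal{P}_s(f)=\mathcal{M}_s(m)$, and (unlike the classical Sharkovsky situation) no limiting ``$2^\infty$''-type exception can occur. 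For the uniform stability of the last assertion I would not perturb for each period separately: I take the single neighborhood on which the fundamental point loop of this generating cycle persists (Theorem~\ref{loop:maintainance}), and note that for every $g$ there the persistent cycle keeps the same rotation pair and, since a small perturbation does not disturb the combinatorial pattern, keeps the same block-structure status, hence remains an exact slow cycle of period $m$; that one cycle then forces all of $\mathcal{M}_s(m)$ inside $g$, giving $\mathcal{M}_s(m)\subseteq\mathcal{P}_s(g)$.
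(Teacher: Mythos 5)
Your treatment of the covering relations---the bulk of the theorem---is essentially the paper's own proof: the same dichotomy between coprime-target steps (forced through Theorem~\ref{result:1}, exact by Theorem~\ref{block:structure:necessary}, persistent by Theorems~\ref{loops:orbits:connection:1} and~\ref{loop:maintainance}) and the two non-coprime steps $9k \gg_s 9k+3$ and $9k+3 \gg_s 6k+2$, discharged exactly as the paper does via Theorems~\ref{k:n:green:forces:k+1:n+3} and~\ref{9k+3:6k+2}, followed by transitivity. Your order-theoretic assembly of $\mathcal{P}_s(f) = \mathcal{M}_s(m)$ is actually more careful than the paper's one-line appeal to transitivity, and your reading of $\mathcal{M}_s(m)$ as the down-set of $m$ (rather than the up-set that the paper's Definition~\ref{green:ordering} literally describes) is the only reading consistent with the first assertion.

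There is, however, a genuine gap in your final paragraph on uniform stability. You claim that for $g$ in the neighborhood where the fundamental point loop of the generating cycle persists, the resulting cycle of $g$ ``keeps the same block-structure status'' because ``a small perturbation does not disturb the combinatorial pattern.'' This is precisely the kind of inheritance you correctly refused two paragraphs earlier: Theorems~\ref{loop:maintainance} and~\ref{loops:orbits:connection:1} preserve only the branch itinerary of the loop, hence the rotation pair---not the pattern. When the generating period $m$ carries a non-coprime rotation pair (which can happen: the top of the down-set may be, say, $9k+3$ with rotation pair $(3k,9k+3)$), the cycle of $g$ traced along the persistent loop may a priori acquire a block structure over a period-$(3k+1)$ cycle, or even collapse to period $3k+1$, since a point tracing a loop of non-coprime rotation pair need only have period a suitable divisor of the loop length. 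The repair available inside your own framework is: one covering-step neighborhood plus downward closure of $\mathcal{P}_s(g)$ (the first assertion applied to $g$ itself) already gives $\mathcal{M}_s(m)\setminus\{m\} \subseteq \mathcal{P}_s(g)$ uniformly; only the persistence of the generating period $m$ needs a separate argument, which your method supplies when that cycle's rotation pair is coprime but not otherwise. To be fair, the paper's own proof is entirely silent on this last point, so your attempt goes beyond it---but the step as you wrote it is not justified.
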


\begin{proof}
	Observe that any natural number strictly greater than $3$ is one of the following forms: $ 6k-1, \; 9k, \; 9k+3, \; 6k+2, \; 3k+1, \;9k+6$,  for $k \in \mathbb{N}$. From Definition \ref{green:ordering}, $ 6k-1 \;\gg_s\; 9k \;\gg_s\; 9k+3 \;\gg_s\; 6k+2 \;\gg_s\; 3k+1 \;\gg_s\; 9k+6 \;\gg_s\; 6(k+1)-1 =6k+5  $,  for each $ k \in \mathbb{N}$. The largest possible \emph{rotation numbers} smaller than $  \frac{1}{3}$ for \emph{patterns} of those periods are respectively, $ \frac{2k-1}{6k-1}, \frac{3k-1}{9k}, \frac{3k}{9k+3}, \frac{2k}{6k+2}, \frac{k}{3k+1}, \frac{3k+1}{9k+6}$. These numbers are ordered as follows: $\dots <  \frac{2k-1}{6k-1} < \frac{3k-1}{9k} < \frac{3k}{9k+3} = \frac{2k}{6k+2} = \frac{k}{3k+1} < \frac{3k+1}{9k+6} < \dots \frac{1}{3}$.

	Let $6k-1 \in \mathcal{P}_s(f)$. By Theorem \ref{result:1}, $f$ has a  \emph{slow} cycle $P^1$ with \emph{rotation pair} $(2k-1,6k-1)$.  Since,  $ \frac{2k-1}{6k-1} < \frac{3k-1}{9k}$, Theorem \ref{result:1} guarantees the existence of a cycle $Q^1$ of $f$ with \emph{rotation pair} $(3k-1,9k)$.  
	By Theorem \ref{loops:orbits:connection:1}, there exists a \emph{point loop} $\gamma^1$ \emph{associated} with $Q^1$.  Theorem \ref{loop:maintainance} ensures the persistence of $\gamma^1$ in a neighborhood $N^1$ of $f$, so that for each $g \in N^1$,  there is a cycle $Q_g^1$ corresponding to $\gamma^1$, also with \emph{rotation pair} $(3k-1,9k)$.  Since,  $\gcd(3k-1,9k)=1$, Theorem \ref{block:structure:necessary} implies that $Q_g^1$ has \emph{no block structure}.  	Consequently, by Theorem \ref{connection:mixing:no:block:structure}, $Q_g^1$ is a \emph{exact}  cycle and hence $9k \in \mathcal{P}_s(g)$ for every $g \in N^1$.

	Now, suppose $9k \in \mathcal{P}_s(f)$. By Theorem \ref{result:1}, $f$ has a  \emph{slow} cycle $P^2$ with \emph{rotation pair} $(3k-1,9k)$.   Since, $3k-1$ and $9k$ are co-prime, $f$ must have a \emph{twist slow} cycle $Q^2$  with \emph{rotation pair} $(3k-1,9k)$. So, by Theorem \ref{k:n:green:forces:k+1:n+3}, there exists a \emph{neighborhood} $N^2$ of $f$, such that for each $g \in N^2$, $g$ has a \emph{exact slow} cycle $Q_g^2$ with \emph{rotation pair} $(3k,9k+3)$. Thus, $9k+3 \in \mathcal{P}_s(g)$ for every $g \in N^2$.

	If $9k+3 \in \mathcal{P}_s(f)$, Theorem \ref{9k+3:6k+2}, guarantees the existence of a neighborhood $N^3$ of $f$ such that for each $g \in N^3$, $6k+2 \in \mathcal{P}_s(g)$.

		If $6k+2 \in \mathcal{P}_s(f)$, by Theorem \ref{result:1}, $f$ has a  \emph{slow} cycle $P^4$ with \emph{rotation pair} $(2k,6k+2)$. The \emph{modified rotation pair} associated with $P^4$ is $(\frac{k}{3k+1}, 2)$. By Theorem \ref{result:1}, the map $f$ has a cycle $Q^4$ with \emph{modified rotation pair} $(\frac{k}{3k+1}, 1)$ and hence \emph{rotation pair} $(k,3k+1)$.  Then, like before from Theorems \ref{loops:orbits:connection:1} and \ref{loop:maintainance},  there exists a neighborhood $N^4$ of $f$, such that,  each $g \in N^4$ has a cycle $Q_g^4$ with \emph{rotation pair} $(k, 3k+1)$.  Since $k$ and $3k+1$ are relatively prime, Theorem \ref{block:structure:necessary} ensures that $Q_g^4$ has no \emph{block structure}. Hence, by Theorem \ref{connection:mixing:no:block:structure}, $Q_g^4$ is a \emph{exact}  cycle. Thus, $3k+1 \in \mathcal{P}_s(g)$ for each $g \in N^4$.

		Suppose,  $3k+1  \in \mathcal{P}_s(f)$. By Theorem \ref{result:1}, $f$ has a  \emph{slow} cycle $P^5$ with \emph{rotation pair} $(3k,3k+1)$.  Since, $\frac{k}{3k+1} < \frac{3k+1}{9k+6}$, Theorem~\ref{result:1} guarantees that $f$ possesses a cycle $Q^5$ with \emph{rotation pair} $(3k+1, 9k+6)$. Because $3k+1$ and $9k+6$ are relatively prime, the standard arguments apply: Theorems~\ref{loops:orbits:connection:1}, \ref{loop:maintainance}, \ref{block:structure:necessary}, and \ref{connection:mixing:no:block:structure} together guarantee a neighborhood $N^5$ of $f$ in which, for every $g \in N^5$, one has $9k+6 \in \mathcal{P}_s(g)$.

		Finally, assume $9k+6 \in \mathcal{P}_s(f)$.
		Then, by Theorem~\ref{result:1}, $f$ has a \emph{slow orbit} $P^6$ with rotation pair $(3k+1,9k+6)$.
		Since $\tfrac{3k+1}{9k+6} < \tfrac{2k+1}{6k+5}$, $f$ must also have an orbit $Q^6$ with \emph{rotation pair} $(2k+1,6k+5)$.
	As $2k+1$ and $6k+5$ are relatively prime, like before, Theorems~\ref{loops:orbits:connection:1}, \ref{loop:maintainance}, \ref{block:structure:necessary}, and \ref{connection:mixing:no:block:structure} together imply that there exists a neighborhood $N^6$ of $f$ such that, for every $g \in N^6$, one has $6k+5 \in \mathcal{P}_s(g)$.

			Now, from the \emph{transitivity} of \emph{forcing relation},  the result follows. 
\end{proof}

\subsection{Forcing among exact fast patterns}

We now investigate the \emph{forcing} relations among \emph{exact fast patterns}.

\begin{definition}\label{red:ordering}
Let $(\mathbb{N}_{>3}, \gg_f)$ denote the partially ordered set of natural numbers greater than three,  where the order relation $\gg_f$  is defined as follows: $ 6 \; \gg_f 4 \;   \gg_f 9 \; \gg_f 7 $ $ \; \gg_f 12 \; \gg_f 15 \; \gg_f 10 \; \gg_f 5\;  \gg_f $ $ 18 \; \gg_f 13 \; \gg_f 21 \; \gg_f 24  \;\gg_f 16 \; \gg_f 8 \; \gg_f 27\; \gg_f 19  \;\gg_f 30 \;\gg_f 33  \; \gg_f 22 \; \gg_f 11  \; \gg_f 36 \; \gg_f \dots$. The ordering begins with $6$, and thereafter the numbers are arranged according to order: $6k-5 \;\gg_f\; 9k-6  \;\gg_f\; 9k-3 \;\gg_f\; 6k-2 \;\gg_f\; 3k-1 \;\gg_f\; 9k, \quad  k \in \mathbb{N}$. We will call the ordering $\gg_f$ the \emph{fast ordering} of $\mathbb{N}_{>3}$. 

\smallskip
For each $m \in \mathbb{N}_{>3}$, define
\[
\mathcal{M}_f(m) = \{ n \in \mathbb{N}_{>3} : n \gg_f m \} \cup \{ m \}.
\]
\end{definition}

\begin{theorem}\label{forcing:mixing:red:main:theorem}
	Let $h \in \mathcal{R}$, and let $\mathcal{P}_f(h)$ denote the collection of periods corresponding to exactfast cycles of $h$.

	If $m, n \in \mathbb{N}_{>3}$ satisfy $m \gg_f n$ and $m \in \mathcal{P}_f(h)$, then there exists a neighborhood $N$ of $h$ in $\mathcal{R}$ such that for every $g \in N$, one has $n \in \mathcal{P}_f(g)$.
	
	Consequently, there exists $m \in \mathbb{N}_{>3}$ for which $\mathcal{P}_f(h) = \mathcal{M}_f(m)$ ,and for all $g \in \mathcal{R}$ sufficiently close to $h$, $\mathcal{M}_f(m) \subseteq \mathcal{P}_f(g)$.
\end{theorem}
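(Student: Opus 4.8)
The plan is to mirror the proof of the slow case, Theorem~\ref{forcing:mixing:green:main:theorem}, under the reflection across $\tfrac13$ that exchanges the roles of \emph{green} and \emph{red}. As there, I would reduce the whole statement to verifying the single generic forcing chain of Definition~\ref{red:ordering},
$$6k-5 \;\gg_f\; 9k-6 \;\gg_f\; 9k-3 \;\gg_f\; 6k-2 \;\gg_f\; 3k-1 \;\gg_f\; 9k \;\gg_f\; 6(k+1)-5,$$
for every $k \in \mathbb{N}$; \emph{transitivity of forcing}, together with the persistence (neighborhood) statement carried along each link, then yields both the forcing assertion $m \gg_f n \Rightarrow n \in \mathcal{P}_f(g)$ and the characterization $\mathcal{P}_f(h) = \mathcal{M}_f(m)$.

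First I would record the combinatorial bookkeeping. One checks that the six progressions $6k-5,\ 9k-6,\ 9k-3,\ 6k-2,\ 3k-1,\ 9k$ partition $\mathbb{N}_{>3}$ (the residue classes mod $3$ being refined mod $6$ for $\equiv 1$ and mod $9$ for $\equiv 0$), the only excluded values being those $\leq 3$ produced at $k=1$. In contrast to the slow case, for a \emph{fast} pattern one needs the \emph{smallest} admissible rotation number exceeding $\tfrac13$ at each denominator; a direct computation gives
$$\frac{2k-1}{6k-5},\qquad \frac{3k-1}{9k-6},\qquad \frac{3k}{9k-3}=\frac{k}{3k-1}=\frac{2k}{6k-2},\qquad \frac{3k+1}{9k}.$$
Comparing the quantities $\tfrac{p}{q}-\tfrac13$ (each of the form $\tfrac{2}{18k+c}$) shows these are \emph{strictly decreasing toward} $\tfrac13$ in the order listed, i.e.\ $\tfrac{2k-1}{6k-5}>\tfrac{3k-1}{9k-6}>\tfrac{k}{3k-1}>\tfrac{3k+1}{9k}>\tfrac{2(k+1)-1}{6(k+1)-5}>\dots>\tfrac13$. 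This is the mirror image of the increasing chain of the slow case, and it is exactly the feature that makes Theorem~\ref{result:1} applicable: each target rotation number lies strictly between $\tfrac13$ and the rotation number of the previous period, hence in the interior of the forced rotation interval.

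With these facts the six links are handled exactly as their slow counterparts, reading periods off modified rotation pairs. For the links $6k-5\gg_f 9k-6$, $3k-1\gg_f 9k$, and $9k\gg_f 6k+1$, the target rotation pairs $(3k-1,9k-6)$, $(3k+1,9k)$, $(2k+1,6k+1)$ are coprime, so Theorems~\ref{loops:orbits:connection:1}, \ref{loop:maintainance}, \ref{block:structure:necessary} and \ref{connection:mixing:no:block:structure} produce a persistent exact fast cycle of the required period. The link $9k-6\gg_f 9k-3$ uses that $(3k-1,9k-6)$ is coprime, hence a fast twist cycle, to which part~(2) of Theorem~\ref{k:n:red:forces:k+1:n+3} applies, adding $(1,3)$ to give an exact fast cycle of rotation pair $(3k,9k-3)$. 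The link $9k-3\gg_f 6k-2$ is precisely part~(2) of Theorem~\ref{9k-3:6k-2}, and the link $6k-2\gg_f 3k-1$ is the prong argument: the modified rotation pair $\bigl(\tfrac{k}{3k-1},2\bigr)$ forces $\bigl(\tfrac{k}{3k-1},1\bigr)$ by Theorem~\ref{result:1}, yielding the coprime rotation pair $(k,3k-1)$ and an exact cycle of period $3k-1$. One must respect the boundary throughout: at $k=1$ the terms $6k-5=1$, $9k-6=3$, $3k-1=2$ fall out of $\mathbb{N}_{>3}$, so the chain begins at $9\cdot1-3=6$ and skips the excluded entries, which is why the ordering opens $6\gg_f 4\gg_f 9\gg_f\dots$.

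The main obstacle is not any single link—each reflects the slow case—but keeping the reflection across $\tfrac13$ consistent at every point simultaneously: the rotation-number inequalities all reverse, so I would verify explicitly that each forced rotation number lies on the $\tfrac13$ side and strictly inside the forced interval, and that the ``fast'' halves (part~(2)) of Theorems~\ref{k:n:red:forces:k+1:n+3} and~\ref{9k-3:6k-2}, whose hypotheses demand period $>3$ and a genuine fast twist, are legitimately applicable for every $k$ in range. The coprimality computations $\gcd(3k-1,9k-6)=\gcd(3k+1,9k)=\gcd(2k+1,6k+1)=\gcd(k,3k-1)=1$ and the clean handling of the $k=1$ degeneracies are where a slip is most likely, so I would treat these explicitly rather than defer them to the word ``analogous.''
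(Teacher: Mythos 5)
Your proposal is correct and takes essentially the same route as the paper's own proof: the identical six-link chain $6k-5 \gg_f 9k-6 \gg_f 9k-3 \gg_f 6k-2 \gg_f 3k-1 \gg_f 9k \gg_f 6(k+1)-5$, with each link discharged by exactly the same tools the paper uses (Theorem~\ref{result:1} for the coprime targets combined with Theorems~\ref{loops:orbits:connection:1}, \ref{loop:maintainance}, \ref{block:structure:necessary}, \ref{connection:mixing:no:block:structure}; part~(2) of Theorem~\ref{k:n:red:forces:k+1:n+3} for $9k-6 \gg_f 9k-3$; part~(2) of Theorem~\ref{9k-3:6k-2} for $9k-3 \gg_f 6k-2$; the prong argument via modified rotation pairs for $6k-2 \gg_f 3k-1$), concluded by transitivity of forcing. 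Your added bookkeeping (the partition of $\mathbb{N}_{>3}$, the $\tfrac{2}{18k+c}$ monotonicity check, the coprimality computations, and the $k=1$ degeneracies) simply makes explicit what the paper asserts in passing, so no further comparison is needed.
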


\begin{proof}
	Observe that any natural number strictly greater than $3$ is one of  the following forms: $6k-5, \; 9k-6, \; 9k-3,  \; 6k-2, \; 3k-1, \; 9k$.  By Definition \ref{red:ordering}, $6k-5 \; \gg_f\; 9k-6  \; \gg_f\; 9k-3 \; \gg_f\; 6k-2 \; \gg_f\; 3k-1 \; \gg_f\; 9k  \; \gg_f\; 6(k+1)-5= 6k+1, k \in \mathbb{N}$. The least \emph{rotation numbers} greater than $\frac{1}{3}$ for \emph{patterns} of those periods are respectively, $\frac{2k-1}{6k-5}, \; \frac{3k-1}{9k-6}, \; \frac{3k}{9k-3}, \; \frac{2k}{6k-2}, \; \frac{k}{3k-1}, \; \frac{3k+1}{9k}, \; \frac{2k+1}{6k+1}$ and these numbers are ordered as $\frac{1}{3} < \dots \frac{2k+1}{6k+1} < \frac{3k+1}{9k} <  \frac{k}{3k-1} = \frac{2k}{6k-2} = \frac{3k}{9k-3} < \frac{3k-1}{9k-6} < \frac{2k-1}{6k-5} < \dots$.

	Let $6k-5 \in \mathcal{P}_f(h)$. By Theorem \ref{result:1}, $h$ has a \emph{fast} cycle $P^1$ with \emph{rotation pair} $(2k-1,6k-5)$. Since, $ \frac{1}{3} < \frac{3k-1}{9k-6} < \frac{2k-1}{6k-5}$, $h$ has a \emph{fast} cycle $Q^1$ with \emph{rotation pair} $(3k-1, 9k-6)$. By Theorem \ref{loops:orbits:connection:1}, $h$ has a \emph{point loop} $\gamma^1$ \emph{corresponding} to $Q^1$. By Theorem \ref{loop:maintainance}, there exists a neighborhood $N^1$ of $h$ such that for each $g \in N^1$, $g$ has the \emph{point loop} $\gamma^1$. Now, by Theorem \ref{loops:orbits:connection:1}, each $g \in N^1$, has a cycle $Q_g^1$ corresponding to $\gamma$ with \emph{rotation pair}  $(3k-1, 9k-6)$. Since, $3k-1$ and $9k-6$ are co-prime, by Theorem \ref{block:structure:necessary}, $Q_g^1$ has \emph{no block structure}. This means $Q_g^1$ is a  \emph{exact}  cycle by Theorem \ref{connection:mixing:no:block:structure}. Hence, $9k-6 \in \mathcal{P}_f(g)$ for each $g \in N^1$.

	  Let $9k-6 \in \mathcal{P}_f(h)$. By Theorem \ref{result:1}, $h$  has a \emph{twist fast} cycle $P^2$ with \emph{rotation pair} $(3k-1,9k-6)$.  By Theorem \ref{k:n:red:forces:k+1:n+3}, there exists a \emph{neighborhood} $N^2$ of $h$, such that for each $g \in N^2$, $g$ has a \emph{exact fast} cycle $Q_g^2$ with \emph{rotation pair} $(3k,9k-3)$  and hence, $9k-3\in \mathcal{P}_f(g)$ for every $g \in N^2$.

	  If $9k-3 \in \mathcal{P}_f(h)$, Theorem \ref{9k+3:6k+2}, guarantees the existence of a neighborhood $N^3$ of $h$ such that for each $g \in N^3$, $6k-2 \in \mathcal{P}_f(g)$.
	  
	  	Let $6k-2 \in \mathcal{P}_f(h)$. By Theorem \ref{result:1}, $h$ has a \emph{fast} cycle $P^4$ with \emph{rotation pair} $(2k,6k-2)$. Its \emph{modified rotation pair}  is $(\frac{k}{3k-1}, 2)$. By Theorem \ref{result:1}, $h$ has a \emph{cycle} $Q^4$ with  \emph{modified rotation pair}   $(\frac{k}{3k-1}, 1)$.  Since, $k$ and $3k-1$ are co-prime, by Theorem \ref{block:structure:necessary}, $Q^4$ has \emph{no block structure}. Now, by Theorems 	\ref{loops:orbits:connection:1}, \ref{loop:maintainance} and \ref{connection:mixing:no:block:structure}, there exists a neighborhood $N^4$ of $h$ such that for each $g \in N^4$, $3k-1 \in \mathcal{P}_f(g)$.

	 Let $3k-1 \in \mathcal{P}_f(h)$. By Theorem \ref{result:1}, $h$ has a \emph{fast} cycle $P^5$ with \emph{rotation pair} $(k,3k-1)$. Since, $ \frac{1}{3} < \frac{3k+1}{9k} < \frac{k}{3k-1}$, by Theorem \ref{result:1},  $h$ has a \emph{fast} cycle $Q^5$ with \emph{rotation pair} $(3k+1, 9k)$. Since, $3k+1$ and $9k$ are co-prime, by Theorems \ref{block:structure:necessary}, 	\ref{loops:orbits:connection:1}, \ref{loop:maintainance} and \ref{connection:mixing:no:block:structure}, there exists a neighborhood $N^5$ of $h$ such that for each $g \in N^5$, $9k \in \mathcal{P}_f(g)$.

	 Let $9k \in \mathcal{P}_f(h)$. By Theorem \ref{result:1}, $h$ has a \emph{fast} cycle $P^6$ with \emph{rotation pair} $(3k+1,9k)$. Since, $ \frac{1}{3} < \frac{2k+1}{6k+1} < \frac{3k+1}{9k}$, by Theorem \ref{result:1},  $h$ has a \emph{fast} cycle $Q^6$ with \emph{rotation pair} $(2k+1, 6k+1)$. Since, $2k+1$ and $6k+1$ are coprime, by Theorems \ref{block:structure:necessary}, 	\ref{loops:orbits:connection:1}, \ref{loop:maintainance} and \ref{connection:mixing:no:block:structure}, there exists a neighborhood $N^6$ of $h$ such that for each $g \in N^6$, $6k+1 = 6(k+1)-5 \in \mathcal{P}_f(g)$.

	  Now, the result follows from the \emph{transitivity} of \emph{forcing}. 
\end{proof}

\subsection{Forcing among exact ternary patterns}\label{ternary:forcing:section}

Now, we will study the \emph{forcing} relations among \emph{exact ternary patterns} (See Figure \ref{ternary_mixing_six}).

\begin{theorem}\label{ternary:multiple}
	Let $P$ be a regular exact ternary cycle of a $P$-linear map $f \in \mathcal{R}$ of period $q>3$. Then $q$ is a multiple of $3$ and if $n_r$ and $n_g$ be the number of red and  green points of $P$, then,  $n_r=n_g \geqslant 1$. 
\end{theorem}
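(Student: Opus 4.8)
The plan is to read off both conclusions from the displacement sum of the fundamental point loop $\Gamma_P$, using exactness only to eliminate a degenerate case. Recall that the rotation number of $P$ is $\rho(\Gamma_P) = d(\Gamma_P)/|\Gamma_P|$, where $d(\Gamma_P) = \sum_{x \in P} d(x \to f(x))$ is the total displacement and $|\Gamma_P| = q$. The paper already records that $d(\Gamma_P)$ is an integer. Since $P$ is \emph{ternary}, $\rho(\Gamma_P) = \tfrac{1}{3}$, so $d(\Gamma_P) = q/3$; integrality of $d(\Gamma_P)$ then forces $3 \mid q$, which is the first assertion.

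For the equality $n_r = n_g$, I would compute $d(\Gamma_P)$ through the \emph{color convention}. Each $x \in P$ contributes $d(x \to f(x)) \in \{0, \tfrac{1}{3}, \tfrac{2}{3}\}$ according as $x$ is \emph{green}, \emph{black}, or \emph{red}. Writing $n_g, n_b, n_r$ for the respective counts, this gives $d(\Gamma_P) = \tfrac{1}{3}(n_b + 2 n_r)$, while $n_g + n_b + n_r = q$. Substituting $d(\Gamma_P) = q/3$ yields $n_b + 2 n_r = n_g + n_b + n_r$, hence $n_r = n_g$. It then remains to show $n_r = n_g \geqslant 1$; since $n_r = n_g$, it suffices to rule out the case $n_r = n_g = 0$, i.e.\ the case where every point of $P$ is \emph{black}. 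In that situation each arrow $x \to f(x)$ has displacement $\tfrac{1}{3}$, so $f(P \cap b_i) \subseteq P \cap b_{i+1}$ for $i = 0,1,2$ (indices mod $3$). Because $f|_P$ is a bijection, chaining these three inclusions around the branches forces $|P \cap b_0| = |P \cap b_1| = |P \cap b_2| = q/3$ and, consequently, $f(P \cap b_i) = P \cap b_{i+1}$.

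I would then take $P_i := P \cap b_i$ as the blocks: their convex hulls lie on distinct branches and are bounded away from $a$ (so they are pairwise disjoint and none contains $a$), they have equal cardinality $q/3 \geqslant 2$ since $q > 3$, and collapsing each $[P_i]$ to a point produces a period-$3$ cycle compatible with $f(P_i) = P_{i+1}$. Thus an all-black $P$ of period $q > 3$ would possess a \emph{block structure}, contradicting exactness via Theorem~\ref{connection:mixing:no:block:structure}. Hence $n_r = n_g \geqslant 1$, completing the proof. The displacement bookkeeping and the divisibility $3 \mid q$ are immediate; the only substantive step — and the one I expect to require care — is the last, where one must check that the all-black configuration meets every clause of the definition of a block structure (equal block sizes and $f(P_i) = P_{i+1}$ via the counting argument) so that Theorem~\ref{connection:mixing:no:block:structure} genuinely applies.
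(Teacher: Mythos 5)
Your proof is correct and follows essentially the same route as the paper's: the divisibility $3\mid q$ and the equality $n_r=n_g$ come from the displacement bookkeeping for the fundamental point loop, and the case $n_r=n_g=0$ is excluded because an all-black cycle admits a block structure over the primitive period-$3$ cycle, contradicting exactness via Theorem~\ref{connection:mixing:no:block:structure}. The one clause you treat informally (your ``collapsing'' remark) --- exhibiting an actual period-$3$ cycle $Q$ of $f$ with one point in each block, as required by clause (2) of the block-structure definition --- can be closed by applying Theorem~\ref{theorem:interval:graph} to the loop $[P_0]\to[P_1]\to[P_2]\to[P_0]$, which is precisely the detail the paper itself leaves as ``easy to see.''
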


\begin{proof}
	Since $P$ is \emph{ternary}, clearly $q$ must be a multiple of $3$ and  simple computation yields $n_g=n_r$. Now, if $n_g = n_r =0$, that is, $P$ has only \emph{black} points,  then it is easy to see that $P$ has \emph{block structure} over the \emph{primitive} cycle of period $3$ and hence cannot be an \emph{exact}  map by Theorem \ref{connection:mixing:no:block:structure}.

\end{proof}

\begin{theorem}\label{ternary:add:3}
	Let $P$ be an exact ternary cycle of period $n$ of a $P$-linear map $f \in \mathcal{R}$. Then there exists a neighborhood $N$ of $f$  such that for each $h \in N$, $h$ has an exact ternary cycle $Q_h$ of period $n+3$. 
\end{theorem}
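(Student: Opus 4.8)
The plan is to produce the period-$(n+3)$ cycle by a local surgery on the fundamental loop of $P$, exactly in the spirit of Theorems~\ref{k:n:green:forces:k+1:n+3} and~\ref{9k+3:6k+2}, and then to certify exactness by proving that the new cycle carries no block structure. First I record the coarse data. Since $P$ is exact and ternary of period $n>3$, Theorem~\ref{ternary:multiple} gives $n=3m$ with $m\ge 2$, and $P$ has $n_g=n_r\ge 1$ green and red points; fix the canonical ordering so that the innermost points $p_0,p_1,p_2$ are black. Because $p_i$ is black and $p_{i+1}$ is the innermost point of $P$ on $b_{i+1}$, each of the three arrows $p_i\to p_{i+1}$ is a legitimate black arrow, so $p_0\to p_1\to p_2\to p_0$ is a point loop.

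I then form a new loop $\beta$ by splicing this black triangle into the fundamental point loop $\Gamma_P$ at the unique occurrence of $p_0$; that is, I replace $\dots\to p_0\to\dots$ by $\dots\to p_0\to p_1\to p_2\to p_0\to\dots$. Tallying displacement and length shows that $\beta$ has rotation pair $(m+1,3m+3)$, hence rotation number $\tfrac13$, so it is ternary of length $n+3$. Theorem~\ref{loop:maintainance} yields a neighborhood $N$ of $f$ in which $\beta$ persists for every $g\in N$, and Theorem~\ref{loops:orbits:connection:1} then produces, for each such $g$, a cycle $Q_g$ following $\beta$. Since the surgery only touches arrows emanating from the black point $p_0$, the green and red arrows inherited from $\Gamma_P$ survive in $\beta$; each green (resp.\ red) vertex of $\beta$ therefore forces a green (resp.\ red) point of $Q_g$, so $Q_g$ has at least one green and at least one red point.

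The heart of the argument is that $Q_g$ has no block structure, and this is exactly where the geometry of the splice is used. The two occurrences of $p_0$ in $\beta$ sit three arrows apart, so the associated points of $Q_g$ are $x$ and $g^3(x)$; both lie in the innermost basic interval $(a,p_0)$ of $b_0$ and are the two points of $Q_g$ closest to $a$ on that branch. Suppose $Q_g$ admitted a nontrivial block structure with blocks of common size $q\ge 2$. As every block is contained in a single branch (its convex hull avoids $a$), the innermost block on $b_0$ consists of the $q$ innermost points of $Q_g\cap b_0$ and hence contains both $x$ and $g^3(x)$. Because $g^3(x)$ lies in this block and in its $g^3$-image, $g^3$ fixes the block; since $f$ permutes the blocks in a single cycle of length $m'$ equal to the period of the collapsed orbit, $g^3$ can fix a block only when $m'\mid 3$, i.e.\ $m'=3$. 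But a block structure over a period-$3$ orbit of rotation number $\tfrac13$ maps each branch onto the next, forcing \emph{every} point of $Q_g$ to be black — contradicting the green and red points found above. Hence $Q_g$ has no block structure, and Theorem~\ref{connection:mixing:no:block:structure} makes $Q_g$ exact; being ternary, it is the required cycle. I regard ruling out block structures of \emph{every} admissible size as the main obstacle, and the observation that the two innermost $b_0$-points are related by exactly $g^3$ is the device that collapses all cases to $m'=3$.

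The one remaining delicate point is the passage from ``$\beta$ has length $n+3$'' to ``$Q_g$ has period exactly $n+3$'', since Theorem~\ref{loops:orbits:connection:1} only returns a periodic point whose period divides $n+3$. I would settle this by showing that the branch itinerary of $\beta$ has full minimal period: a proper period would force the itinerary to be a pure repetition compatible with a period-$3$ ternary pattern, whereas the green and red vertices inherited from $\Gamma_P$ (occurring once each, while $p_0,p_1,p_2$ now occur twice) break precisely that periodicity. Since $g^k(y)$ shares a branch with the $k$-th vertex of $\beta$, the itinerary of $Q_g$ must be periodic with its period, so full minimal period of the itinerary pins the period of $Q_g$ at $n+3$. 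The auxiliary verifications — that $x\ne g^3(x)$, that they are genuinely the two points nearest $a$ on $b_0$, and that the surgery leaves the green/red arrows intact — are routine once the configuration is drawn.
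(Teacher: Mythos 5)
Your opening construction coincides with the paper's: splice the black triangle $p_0\to p_1\to p_2\to p_0$ into the fundamental loop at its unique passage through $p_0$, invoke Theorem~\ref{loop:maintainance} to make the resulting loop $\beta$ persist on a neighborhood, and extract a cycle via Theorem~\ref{loops:orbits:connection:1}. Where you depart is the block-structure step, and your route is genuinely different. The paper never tries to show that the cycle obtained from \emph{this} splice is block-structure free; instead, if $Q_h$ happens to have a block structure, it abandons $\beta$ and re-splices a black loop of length $3$ through the red point of $P$ farthest from $a$ (available by Theorems~\ref{ternary:multiple} and~\ref{black:loop:length:3}(1)), asserting that the new cycle has no block structure. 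Your direct argument --- the two points of $Q_g$ sitting under the two occurrences of $p_0$ are $x$ and $g^3(x)$, so any block structure would have its innermost $b_0$-block fixed by $g^3$, forcing exactly three blocks, hence an all-black cycle, contradicting the green and red points inherited from the arrows of $\Gamma_P$ that survive the splice --- is correct at the same level of rigor the paper allows itself (the positional and counting claims for the perturbed map $g$ are exactly of the kind used in the proof of Theorem~\ref{k:n:green:forces:k+1:n+3}), and it eliminates the paper's case analysis altogether. That is a real simplification.

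The gap is in your final step, pinning the period at $n+3$. You are right that this needs an argument (the paper asserts the period without proof), but your proposed argument is unsound. Since $\beta$ has rotation number $\tfrac13$, a proper period of its \emph{branch} itinerary must have the form $3e$ with $e$ a proper divisor of $(n+3)/3$; your observation about green/red vertices only excludes $e=1$, because the three consecutive inserted black steps meet all residue classes mod $3$, whereas for $e\ge 2$ it excludes nothing. The failure is concrete: for $n=9$, a ternary cycle $P$ with branch itinerary $(0,0,1,0,1,2,0,0,1)$ (two green and two red points, and no block structure is even combinatorially possible since $b_2$ carries a single point) gives $I_\beta=(0,1,2,0,0,1,0,1,2,0,0,1)$, which is $6$-periodic; nothing in your argument rules such a $P$ out, and then branch-level data cannot distinguish a period-$12$ orbit from a period-$6$ one. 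The repair is to argue at the level of the \emph{vertex} sequence of $\beta$ rather than branches: every green (indeed every non-$p_i$) vertex of $P$ occurs exactly once in $\beta$, so $\beta$ is not a repetition of a shorter loop, and the finer positional correspondence you already rely on in the block-structure step (each orbit point lies in the basic interval attached to its vertex, so distinct vertices at positions $k$ and $k+d$ give distinct points $g^k(y)\neq g^{k+d}(y)$) then pins the minimal period at $n+3$. With only the branch-level conclusion stated in Theorem~\ref{loops:orbits:connection:1}(1), the period genuinely cannot be pinned down.
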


\begin{proof}
	As before, we will assume that the \emph{branches} of $\tau$ have been \emph{canonically ordered} such that the point $p_i$, $i=0,1,2$ of $P$ closest to the \emph{branching point} $a$ in each \emph{branch} $b_i$, $i=0,1,2$ is \emph{black}.  Let $\alpha$ be the \emph{fundamental point loop associated} with $P$. Then, $\alpha$ passes exactly once through $p_0$. Let $\beta$ be the \emph{point} loop of length $n+3$ obtained by adjoining $\alpha$ with the the \emph{black} loop of length three,  $\gamma: p_0 \to p_1 \to p_0$.  It is easy to see that $\beta$ has \emph{rotation number} $\frac{1}{3}$. By Theorem \ref{loop:maintainance}, there exists a neighborhood $N_1$ of $f$ such that for each $h \in N_1$, $h$ has the \emph{point loop} $\beta$. Let $Q_h$ be the cycle of $h$  associated with $\beta$. If $Q_h$ has no \emph{block structure}, by Theorem \ref{connection:mixing:no:block:structure},  $Q_h$ is an \emph{exact ternary} cycle of period $n+3$ and we are done.
	
	Suppose $Q_h$ has \emph{block structure}. Let $n_r$ be the number of \emph{red} points of $P$. By Theorem \ref{ternary:multiple}, $n_r \geqslant 1$. Choose a \emph{red} point $x_r$ of $P$ \emph{farthest} from $a$ on its \emph{branch}.  By Theorem \ref{black:loop:length:3} (1), there exists a \emph{black} loop $\delta$ of length $3$ passing through $x_r$. Let $\beta'$ be the \emph{point} loop of length $n+3$ obtained by adjoining $\alpha$ with $\delta$.  By Theorem \ref{loop:maintainance}, there exists a neighborhood $N_2$ of $f$ such that for each $g \in N_2$, $g$ has the \emph{point loop} $\beta'$. Let $R_g$ be the cycle of $g$  associated with $\beta'$. It is  easy to see that,  $R_g$ is \emph{ternary}, has period $n+3$ and  \emph{no block structure}. Hence by Theorem \ref{connection:mixing:no:block:structure}, $R_g$ is an \emph{exact ternary} cycle of period $n+3$, and hence the result follows. 
\end{proof}

	Let $\mathbb{N}_{3}$ denote the set of all positive integers that are multiples of $3$ and strictly greater than $3$. For each $m \in \mathbb{N}_{3}$, define \[\mathcal{M}_t(m) = \{ n \in \mathbb{N}_{3} : n > m \} \cup \{ m \} \].

	\begin{figure}[H]
		\caption{An \emph{exact ternary} cycle of period $6$}
		\centering
		\includegraphics[width=0.5 \textwidth]{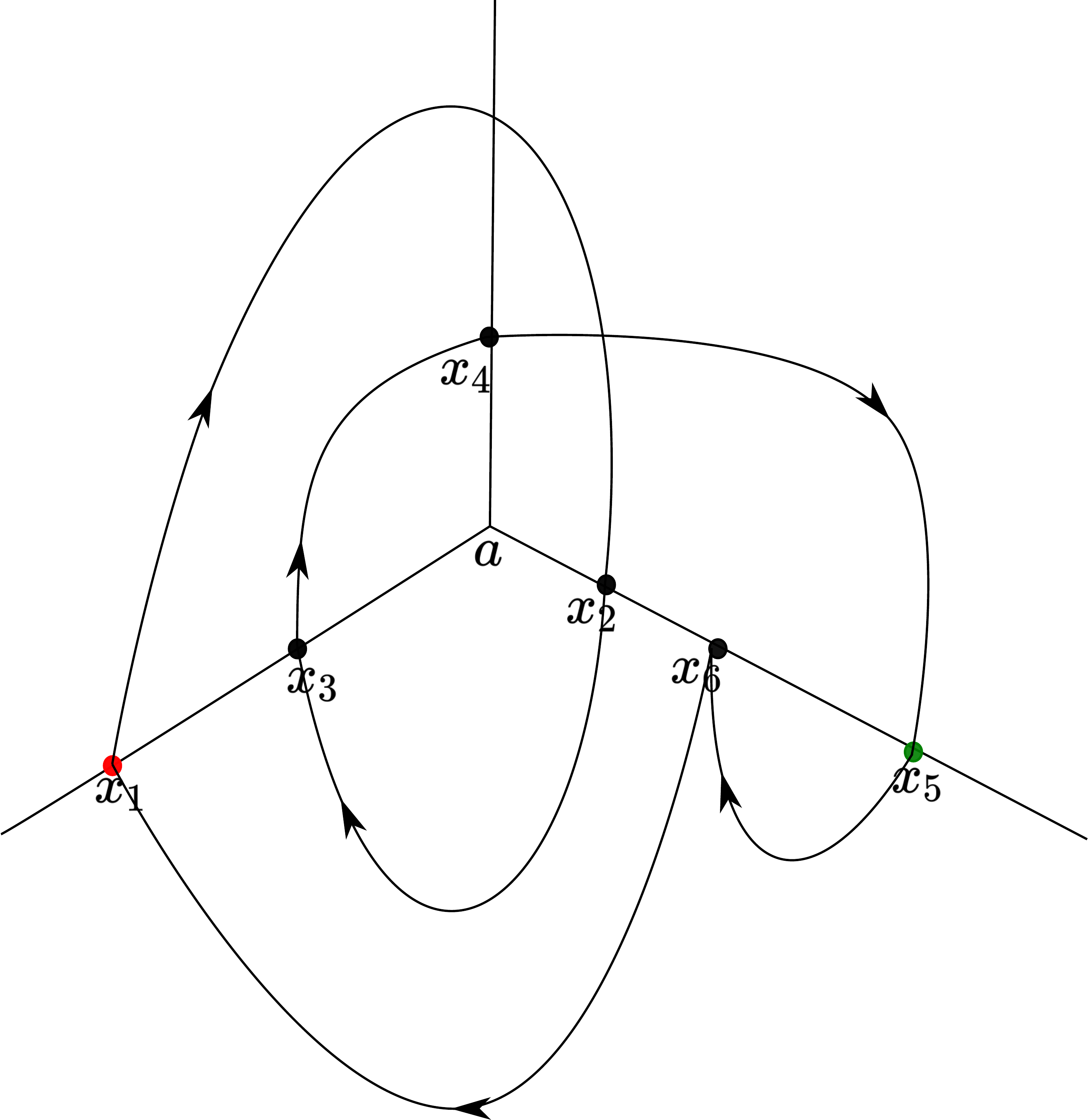}
		\label{ternary_mixing_six}
	\end{figure}

\begin{theorem}\label{ternary:final}

	Let $f \in \mathcal{R}$, and let $\mathcal{P}_t(f)$ denote the set of all periods corresponding to exact ternary cycles of $f$.
	
	If $m, n \in \mathbb{N}_{3}$ satisfy $m > n$ and $n \in \mathcal{P}_t(f)$, then there exists a neighborhood $N$ of $f$ in $\mathcal{R}$ such that $m \in \mathcal{P}_t(g)$ for every $g \in N$.
	
	Equivalently, there exists $m \in \mathbb{N}_{3}$ for which $\mathcal{P}_t(f) = \mathcal{M}_t(m)$,
	and for all $g$ sufficiently close to $f$, one has $\mathcal{M}_t(m) \subseteq \mathcal{P}_t(g)$.
\end{theorem}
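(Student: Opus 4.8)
The plan is to exploit the single ``$+3$'' step provided by Theorem~\ref{ternary:add:3} together with the fact (Theorem~\ref{ternary:multiple}) that every exact ternary period is a multiple of $3$ exceeding $3$, so that $\mathcal{P}_t(f)\subseteq\mathbb{N}_3$ and the relevant ordering is just the usual order $>$ restricted to $\mathbb{N}_3$. The whole content is therefore that a smaller exact ternary period forces every larger one, robustly. I would first isolate a purely combinatorial (non-robust) statement: \emph{if a map $h\in\mathcal{R}$ has an exact ternary cycle of period $p$, then it has one of period $p+3$}. To prove it I pass to the $P$-linear surrogate $f_P$ of the period-$p$ cycle $P$ (which is $P$-linear and lies in $\mathcal{R}$, $P$ being regular), apply Theorem~\ref{ternary:add:3} to $f_P$, and note that $f_P$ itself lies in the resulting neighborhood; hence $f_P$ carries an exact ternary cycle $P^{+}$ of period $p+3$ of some pattern $A^{+}$. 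Since $f_P$ is the linear map of the pattern $A$ of $P$, Theorem~\ref{forcing} shows $A$ forces $A^{+}$, so $h$ (carrying $A$) carries a cycle of pattern $A^{+}$. Iterating and using transitivity of forcing makes $\mathcal{P}_t(h)$ upward closed in $\mathbb{N}_3$; when nonempty it equals $\mathcal{M}_t(m)$ with $m=\min\mathcal{P}_t(h)$, which is the ``equivalently'' clause.

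For the robust statement I would \emph{not} re-transfer through $f_P$ (that yields only a neighborhood of $f_P$, not of $f$), but instead run the construction of Theorem~\ref{ternary:add:3} at $f$ itself. Given $n\in\mathcal{P}_t(f)$ and a target $m=n+3k$, the combinatorial step above first furnishes an exact ternary cycle $P'$ of $f$ of period $m-3$. The point that legitimizes working at $f$ is that $G_{P'}^{\,f}\supseteq G_{P'}^{\,f_{P'}}$: since $f$ and $f_{P'}$ agree on $P'$ and $f_{P'}$ is affine on basic intervals, the farthest image of each branch segment is attained at a point of $P'$, so every arrow of $G_{P'}^{\,f_{P'}}$ is realized at a point of $P'$ and hence survives for $f$. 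In particular the black $3$-loop through the farthest red point of $P'$ guaranteed by Theorem~\ref{black:loop:length:3}(1) is available for $f$, and adjoining it to the fundamental point loop of $P'$ yields a point loop $\beta$ of length $m$ and rotation number $\tfrac13$ for $f$. By Theorem~\ref{loop:maintainance} the loop $\beta$ persists on a neighborhood $N$ of $f$, and by Theorem~\ref{loops:orbits:connection:1} each $g\in N$ carries a ternary cycle of period $m$ realizing $\beta$; the block-structure analysis of Theorem~\ref{ternary:add:3} shows these realizations have no block structure, so by Theorem~\ref{connection:mixing:no:block:structure} they are exact and $m\in\mathcal{P}_t(g)$. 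The ``equivalently'' clause additionally needs the \emph{minimal} period $m_0$ to persist; granting that (discussed next), the within-$g$ forcing of the first paragraph upgrades $m_0\in\mathcal{P}_t(g)$ to $\mathcal{M}_t(m_0)\subseteq\mathcal{P}_t(g)$ for all $g\in N$.

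I expect the exactness (no block structure) of the persistent cycles to be the main obstacle, and the reason the ternary case is genuinely harder than the slow and fast ones. In those cases every period is forced with a \emph{coprime} rotation pair, so Theorem~\ref{block:structure:necessary} makes ``no block structure'' automatic; here the rotation pair of a period-$m$ ternary cycle is $(m/3,m)$, which is never coprime for $m\ge 6$, so that criterion is useless. Exactness must instead be read off the fine geometry of $\beta$—specifically from the way the inserted black $3$-loop through the farthest red point forces three consecutive cycle points onto distinct branches, obstructing any admissible block partition—exactly as in Theorem~\ref{ternary:add:3}, and this obstruction must be shown to persist uniformly over $N$. The secondary delicate point is the base case: persisting the minimal period $m_0$ (for instance $m_0=6$), where no period-$(m_0-3)$ exact ternary cycle is available to build from, so the minimal cycle itself must be realized directly by a point loop whose realizations provably lack block structure; I would handle this by the same red-point insertion device applied to the minimal cycle, checking the block-partition obstruction by hand for the small realizations.
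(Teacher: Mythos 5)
Your proposal is correct and takes essentially the same route as the paper, whose entire proof of this theorem is the single sentence ``Follows from Theorems~\ref{ternary:multiple} and~\ref{ternary:add:3}'': your first two paragraphs are exactly that iteration, with the two bridging details the paper leaves implicit supplied correctly --- the transfer between an arbitrary $f\in\mathcal{R}$ and the $P$-linear representative via Theorem~\ref{forcing} (legitimate because exactness and being ternary are pattern invariants), and the containment $G_{P'}^{\,f}\supseteq G_{P'}^{\,f_{P'}}$ (legitimate because every arrow of the $P'$-linear graph is witnessed by a point of $P'$, on which the two maps agree), which lets the construction of Theorem~\ref{ternary:add:3} be run at $f$ itself rather than only at $f_{P'}$.

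One comparison worth recording: the ``base case'' you flag is not a defect of your write-up relative to the paper's --- it is a gap in the paper's own statement and one-line proof. Theorem~\ref{ternary:add:3} only ever produces, on a neighborhood of $f$, exact ternary cycles of period $n+3$; combining it with the within-$g$ upward closure therefore yields $\mathcal{M}_t(m+3)\subseteq\mathcal{P}_t(g)$ for all $g$ near $f$, not the literal $\mathcal{M}_t(m)\subseteq\mathcal{P}_t(g)$ asserted in the last clause, which would additionally require persistence of the minimal period $m$ itself. Neither the paper nor your proposal establishes that persistence, and your suggested repair (inserting a black $3$-loop at a red point of the minimal cycle) cannot, since any insertion raises the period by $3$; one would instead have to persist the fundamental point loop of the minimal cycle and rule out both period collapse (its branch itinerary could have least period a proper divisor of $m$) and block structure in its realizations. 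So on this point you have not fallen short of the paper; you have correctly identified where its proof overreaches its statement.
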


\begin{proof}
	Follows from Theorems \ref{ternary:multiple} and \ref{ternary:add:3}. 
	
\end{proof}

A natural direction for future research concerns the behavior of \emph{non-exact} patterns on triods. By Theorem~\ref{block:structure:necessary}, such patterns necessarily possess a \emph{block structure}. An important question, therefore, is to determine the precise rule governing the coexistence of these \emph{non-exact patterns} and to characterize the corresponding \emph{forcing relations} among their \emph{periods}. We plan to investigate these questions in our forthcoming work.


\begin{thebibliography}{99999999}
		
		
		\bibitem{alm98} L. Alsedà, J. Llibre, M. Misiurewicz, \emph{Periodic orbits of maps of Y} , Trans. Amer. Math. Soc. \textbf{313} (1989) 475–538.
		
		
		
		
		
		\bibitem{alm00} Ll. Alsed\`{a}, J. Llibre and M. Misiurewicz,
		\emph{Combinatorial Dynamics and Entropy in Dimension One},
		Advanced Series in Nonlinear Dynamics (2nd edition) \textbf{5} (2000),
		World Scientific Singapore (2000)
		
		\bibitem{almnew}  Ll. Alsed\`{a}, J. Moreno, \emph{Linear orderings and the full periodicity kernel for the $n$-star}, Math. Anal. Appl. \textbf{180} (1993) 599-616. 
		
		\bibitem{almm88} Ll. Alsed\`a, J. Llibre, F. Ma\~nosas and M.
		Misiurewicz, \emph{Lower bounds of the topological entropy for
			continuous maps of the circle of degree one}, Nonlinearity
		\textbf{1}(1988), 463--479.
		
	
		
		\bibitem{Ba} S. Baldwin, \emph{Generalization of a theorem of
			Sharkovsky on orbits of continuous real valued functions,} Discrete
		Math. \textbf{67} (1987), 111--127.
		
		
		
		
			\bibitem{BB5} S.Bhattacharya, \emph{Forcing minimal patterns of triods}, Topology and its Applications \textbf{344} (2024), 108816. 
			
			
		\bibitem{bgmy80} L. Block, J. Guckenheimer, M. Misiurewicz and
		L.-S. Young, \emph{Periodic points and topological entropy of
			one-dimensional maps}, Springer Lecture Notes in Mathematics
		\textbf{819} (1980), 18--34.
		
	
		\bibitem{BMR} A. Blokh, M. Misiurewicz, \emph{Rotation numbers for certain maps of an n-od}, Topology and its Applications
		\textbf{114} (2001) 27–48
		
		\bibitem{BM2} A. Blokh, M. Misiurewicz, \emph{Rotating an interval
			and a circle}, Trans. Amer. Math. Soc. \textbf{351}(1999), 63--78.
		
		\bibitem{BM3} A. Blokh, M. Misiurewicz
		\emph{Forcing among patterns with no block structure},
		Topology Proceedings, \textbf{54} (2019), 125-137
		
		
		
		\bibitem{cgt84} A. Chenciner, J.-M. Gambaudo and C. Tresser
		\emph{Une remarque sur la structure des endo\-morphismes de degr\'e
			$1$ du cercle}, C. R. Acad. Sci. Paris, S\'er I Math. \textbf{299}
		(1984), 145--148.
		
	
		\bibitem{ito81} R. Ito, \emph{Rotation sets are closed}, Math.
		Proc. Camb. Phil. Soc. \textbf{89}(1981), 107--111.
		
		
		
			\bibitem {mis82} M. Misiurewicz, \emph{Periodic points of maps
			of degree one of a circle,} Ergod. Th. \& Dynam. Sys. \textbf{2}(1982)
		221--227.
		
		\bibitem {mis89}M. Misiurewicz, \emph{Formalism for studying
			periodic orbits of one dimensional maps}, European Conference
		on Iteration Theory (ECIT 87), World Scientific
		Singapore (1989), 1--7.	
		\bibitem{mz89} M. Misiurewicz and K. Ziemian, \emph{Rotation Sets
			for Maps of Tori}, J. Lond. Math. Soc. (2) \textbf{40}(1989) 490--506.
		
		\bibitem{npt83} S. Newhouse, J. Palis, F. Takens
		\emph{Bifurcations and stability of families of diffeomorphisms},
		Inst. Hautes \'Etudes Sci. Publ. Math. \textbf{57}(1983), 5--71.
		
		\bibitem{poi} H. Poincar\'e, \emph{Sur les courbes d\'efinies par
		les \'equations diff\'erentielles}, Oeuvres completes, \textbf{1}
		 137--158, Gauthier-Villars, Paris (1952).
		
		
		\bibitem{shatr} A. N. Sharkovsky, \emph{Coexistence of the
			cycles of a continuous mapping of the line into itself}, Internat.
		J. Bifur. Chaos Appl. Sci. Engrg. \textbf{5}(1995), 1263--1273.
		
		\bibitem{S} A. N. Sharkovsky, \emph{Coexistence of the cycles of
			a continuous mappimg of the line into itself}, Ukraine Mat. Zh.
		\textbf{16}(1964), 61--71 (Russian).
	
		
		\bibitem {zie95}  K. Ziemian, \emph{Rotation sets for subshifts
			of finite type,} Fundam. Math. \textbf{146}(1995), 189--201.
		
		
		
	\end{thebibliography}
\end{document}